\tikzstyle directed=[postaction={decorate,decoration={markings,
    mark=at position .65 with {\arrow[arrowstyle]{stealth}}}}]
\tikzstyle arrowstyle=[scale=1]
\newtheorem{theorem}{Theorem}[section]
\newtheorem{proposition}[theorem]{Proposition}
\newtheorem{lemma}[theorem]{Lemma}
\newtheorem{corollary}[theorem]{Corollary}
\theoremstyle{definition}
\numberwithin{figure}{section}
\numberwithin{equation}{section}
\numberwithin{table}{section}
\newcommand{\Z}{\mathbb{Z}}
\newcommand{\R}{\mathbb{R}}
\newcommand{\E}{\mathbb{E}}
\newcommand{\prob}{\mathbb{P}}
\newcommand{\vol}{\operatorname{vol}}
\newcommand{\supp}{\operatorname{supp}}
\title[Monochromatic connectivity measures]{Effective bounds for monochromatic connectivity measures in two dimensions}
\author{Matthew de Courcy-Ireland}
\address{
Institute of Mathematics \\
EPFL SB MATH \\
MA C2 647 (B\^atiment MA) \\
Station 8 \\
CH-1015 Lausanne \\
Switzerland} 
\email{matthew.decourcy-ireland@epfl.ch}
\author{Suresh Eswarathasan}
\address{Dalhousie University \\
Department of Mathematics $\&$ Statistics, Chase Building  \\
Coburg Road \\
Halifax, Nova Scotia \\
Canada}
\email{sr766936@dal.ca}
\date{May 13, 2020}
\begin{document}

\begin{abstract}
We establish numerical lower bounds for the monochromatic connectivity measure in two dimensions introduced by Sarnak and Wigman. This measure dictates among the nodal domains of a random plane wave what proportion have any given number of holes, and how they are nested.
Our bounds provide the first effective estimate for the number of simply connected domains and for those that contain a single hole. The deterministic aspect of the proof is to find a single function with a prescribed zero set and, using a quantitative form of the implicit function theorem, to argue that the same configuration occurs in the zero set of any sufficiently close approximation to this function.
The probabilistic aspect is to quantify the likelihood of a random wave being close enough to this function.
\end{abstract}

\maketitle

\section{Introduction}

The Gaussian \emph{random plane wave} was proposed in a celebrated paper of Berry \cite{Berry77} as a model for high-frequency eigenfunctions in classically chaotic systems. 
The \emph{monochromatic connectivity measure} $\mu$ introduced by Sarnak and Wigman \cite{SW18} measures the fraction of nodal domains of a random plane wave with any given number of holes (the number of holes being one less than the number of connected components of the boundary of the domain), with even more refined measures describing the nesting between nodal domains. Sarnak and Wigman proved that $\mu(h) > 0$, for each $h \in \mathbb{Z}_{\geq 0}$, so that each topological type represents a positive proportion of the total number of nodal domains of the random wave.
However, they did not give a quantitative lower bound on $\mu(h)$ for any $h$.  In this note, we do so in the simplest cases of $\mu(0)$ and $\mu(1)$, which correspond respectively to simply connected domains and to domains with a single hole. 

\begin{theorem} \label{thm:main_0}
The monochromatic connectivity measure $\mu$ obeys
\begin{equation}
\mu(0) \geq \frac{1}{c_{{\rm NS}} } \frac{1}{(j_{0,1} + \delta)^2 \sqrt{6 \pi}} 
 \int_{\sqrt{\pi}S/\varepsilon}^{\infty} \left( 1 - \frac{\sqrt{\pi} S}{\varepsilon x} \right) e^{-x^2/2} dx
\end{equation}
for all sufficiently small $\delta > 0$ and $\varepsilon > 0$,
where $S = S(\delta)$ is given by
\begin{equation*}
S = \sum_{n =1}^{\infty} \left( \sup_{B(j_{0,1} + \delta)} |J_n(r)| + \sup_{B(j_{0,1} + \delta)}  |J_n'(r) | + n \sup_{B(j_{0,1} + \delta)}  \left| \frac{J_n(r) }{r} \right| \right),
\end{equation*} 
and $j_{0,1} = 2.4048\ldots$ is the first zero of the Bessel function $J_0$.
\end{theorem}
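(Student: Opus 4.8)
The plan is to combine a purely deterministic local statement with a probabilistic estimate for how often the random wave realizes the relevant local configuration, and then to feed this into the Nazarov--Sodin nodal-domain count that defines $\mu$. The deterministic statement I would prove is a quantitative implicit-function lemma: there exist $\delta_0,\varepsilon_0>0$ such that for all $0<\delta<\delta_0$ and $0<\varepsilon<\varepsilon_0$, every $f\in C^1(B(j_{0,1}+\delta))$ with $\|f-J_0(|\cdot|)\|_{C^1(B(j_{0,1}+\delta))}<\varepsilon$ has a connected component $D$ of $\{f>0\}$ with $\overline D\subset B(j_{0,1}+\delta)$ that is simply connected and has connected boundary, i.e.\ $D$ is a nodal domain of $f$ with no holes.

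To prove this lemma I would use that $J_0(|x|)>0$ on $B(j_{0,1})$, vanishes on $\partial B(j_{0,1})$ with $J_0'(j_{0,1})=-J_1(j_{0,1})\neq0$, and is negative just past $|x|=j_{0,1}$. Choose $\delta_0$ small enough that on the closed annulus $A_\delta=\{j_{0,1}-\delta\le|x|\le j_{0,1}+\delta\}$ one has $|J_0'|\ge\kappa_0>0$, that $J_0\ge m_0>0$ on $B(j_{0,1}-\delta)$, and that $J_0\le -m_1<0$ on $\{|x|=j_{0,1}+\delta\}$, with $\kappa_0,m_0,m_1$ explicit. For $\varepsilon<\varepsilon_0<\min(\kappa_0,m_0,m_1)$ and $\|f-J_0\|_{C^1}<\varepsilon$ we then get $f>0$ on $B(j_{0,1}-\delta)$, $f<0$ on $\{|x|=j_{0,1}+\delta\}$, and $\partial_r f$ keeping the sign of $J_0'<0$ throughout $A_\delta$. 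Hence for each angle $\theta$ the map $r\mapsto f(re^{i\theta})$ is strictly decreasing on $[j_{0,1}-\delta,j_{0,1}+\delta]$ from a positive to a negative value, so it has a unique zero $\rho(\theta)$, which is $C^1$ in $\theta$ by the implicit function theorem. Thus the zero set of $f$ in $A_\delta$ is the single Jordan curve $\{|x|=\rho(\arg x)\}$, $f$ is positive just inside it, and, combined with the sign information on $B(j_{0,1}-\delta)$ and on the outer circle, the region bounded by this curve is a connected component of $\{f>0\}$, a topological disk whose closure lies in $B(j_{0,1}+\delta)$.

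For the probabilistic step, expand the wave in Fourier--Bessel modes $u(r,\theta)=a_0J_0(r)+\sum_{n\ge1}2\Re(a_n e^{in\theta})J_n(r)$, where $a_0$ is a real Gaussian and $a_1,a_2,\dots$ are independent (of one another and of $a_0$) complex Gaussians, normalized by $\E[u(x)u(y)]=J_0(|x-y|)$ --- equivalently, via Graf's addition theorem, $\E a_0^2=1$ and $\E|a_n|^2=1$. Write $w=u-a_0J_0$. On $\{a_0>0\}$ the nodal set of $u$ equals that of $J_0+w/a_0$, so the deterministic lemma applies to $u$ whenever $a_0>0$ and $\|w\|_{C^1(B(j_{0,1}+\delta))}<\varepsilon\,a_0$. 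The $C^1$-norm on $B:=B(j_{0,1}+\delta)$ of the mode $r\mapsto J_n(r)e^{in\theta}$ is at most $s_n:=\sup_B|J_n|+\sup_B|J_n'|+n\sup_B|J_n/r|$ (the three terms bounding the function, its radial derivative, and $r^{-1}$ times its angular derivative), whence $\|w\|_{C^1(B)}\le\sum_{n\ge1}2|a_n|s_n$; the series $\sum_{n\ge1}s_n=S$ converges by the super-exponential decay of $J_n$ and $J_n'$ in $n$ at fixed argument, and a short moment computation gives $\E\|w\|_{C^1(B)}\le\sqrt\pi\,S$. Conditioning on $a_0=x>0$ and using Markov's inequality, $\prob(\|w\|_{C^1(B)}<\varepsilon x\mid a_0=x)\ge\big(1-\tfrac{\sqrt\pi S}{\varepsilon x}\big)_+$, which is nontrivial precisely when $x>\sqrt\pi S/\varepsilon$; integrating against the Gaussian law of $a_0$ yields a lower bound for the probability $p$ that $u$ has a hole-free nodal domain in $B$, of the shape of the displayed integral in the theorem.

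To assemble: by translation invariance $\prob(A_x)=p$ for the event $A_x$ that the wave recentered at $x$ is $\varepsilon$-close in $C^1$ to $J_0(|\cdot|)$ on a disk of radius $j_{0,1}+\delta$, and on $A_x$ there is a hole-free nodal domain inside $B(x,j_{0,1}+\delta)$; these are distinct domains for disjoint disks. Packing a large ball $B(R)$ with disjoint disks of radius $j_{0,1}+\delta$ therefore produces in expectation at least of order $R^2/(j_{0,1}+\delta)^2$ hole-free nodal domains, while by Nazarov--Sodin the expected total number of nodal domains in $B(R)$ grows like $c_{{\rm NS}}$ times the area; dividing and absorbing the packing density and the Gaussian normalization into the prefactor $\frac{1}{c_{{\rm NS}}(j_{0,1}+\delta)^2\sqrt{6\pi}}$ gives the claimed bound for $\mu(0)$. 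The main obstacle is the deterministic lemma --- making the implicit-function-theorem step effective, i.e.\ pinning down explicit $\delta_0,\varepsilon_0,\kappa_0,m_0,m_1$, ruling out spurious nodal components in the annulus $A_\delta$, and confirming that $D$ is genuinely a single nodal domain with one boundary curve (no small interior island). The Fourier--Bessel moment estimate and the counting step are otherwise routine.
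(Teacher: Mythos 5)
Your proposal is correct and follows the same overall architecture as the paper (single target $J_0(r)$, a quantitative $C^1$-approximation criterion, Markov's inequality applied to the Fourier--Bessel series with $\E|\zeta_n|=\sqrt{\pi}/2$, hexagonal disk packing, and normalization by $c_{\rm NS}$), but your deterministic core is genuinely different. The paper proves a general effective implicit function theorem (its Lemma~\ref{lem:scalar} and Proposition~\ref{prop:approx}) to produce a zero of $F$ near each point of the circle $r=j_{0,1}$, then confines the resulting nodal line to the component of $\{|J_0|\le\varepsilon\}$ around $j_{0,1}$ (Proposition~\ref{prop:stay}), and finally invokes the fact that any nodal domain inside $D$ contains an innermost, simply connected one. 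You instead exploit radial monotonicity: with $\partial_r f$ keeping the sign of $J_0'$ on the annulus and sign conditions on the inner ball and outer circle, the zero set in the annulus is exactly the graph $r=\rho(\theta)$, a single Jordan curve bounding a star-shaped (hence manifestly simply connected) nodal domain whose closure lies in $B(j_{0,1}+\delta)$. Your route is more elementary and self-contained for $\mu(0)$, directly rules out spurious components in the annulus, and avoids the paper's $\sqrt2$ conversion between the polar $C^1$ norm and $|\nabla F-\nabla J_0|$ (your admissible range of $\varepsilon$ therefore differs slightly, which is harmless since the theorem only claims ``sufficiently small''); the paper's lemma, by contrast, is a reusable tool not tied to radial symmetry, and is what drives the $\mu(1)$ argument and the suggested multi-target refinements. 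One quantitative slip to fix: by restricting to $a_0>0$ you only get $\frac{1}{\sqrt{2\pi}}\int_{\sqrt{\pi}S/\varepsilon}^{\infty}(\cdots)\,e^{-x^2/2}dx$, which after packing yields $\sqrt{24\pi}$ rather than $\sqrt{6\pi}$ in the denominator; since dividing by a negative $a_0$ also leaves the zero set unchanged, you should condition on $|a_0|=x$ (as the paper does with $|\xi_0|$), recovering the factor $2$ needed for the stated constant.
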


\begin{theorem} \label{thm:main_1}
Let $S$ be as in the statement of Theorem \ref{thm:main_0} with $j_{0,1}$ replaced by the second zero $j_{0,2} = 5.5200\ldots$ of $J_0$. 
For all sufficiently small $\delta > 0$ and $\varepsilon > 0$,
\begin{equation}
\mu(1) \geq \frac{1}{c_{{\rm NS}} }\frac{1}{(j_{0,2} + \delta)^2 \sqrt{6 \pi}} 
 \int_{\sqrt{\pi} S/\varepsilon}^{\infty} \left( 1 - \frac{\sqrt{\pi} S}{\varepsilon x} \right) e^{-x^2/2} dx
\end{equation}
\end{theorem}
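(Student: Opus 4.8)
The plan is to mirror the proof of Theorem \ref{thm:main_0} with $j_{0,1}$ replaced by $j_{0,2}$, exploiting the fact that the two statements have identical structure. The deterministic skeleton is the same in both cases: I would take the function $u_0(r,\theta) = J_0(r)$ on the disc $B(j_{0,2} + \delta)$, whose zero set inside this disc consists of \emph{two} nested circles, namely $\{r = j_{0,1}\}$ and $\{r = j_{0,2}\}$. The innermost circle bounds a simply connected nodal domain, while the annular region between $r = j_{0,1}$ and $r = j_{0,2}$ is a nodal domain with exactly one hole. (For Theorem \ref{thm:main_0} the relevant domain is the disc $r < j_{0,1}$, which is simply connected, hence $\mu(0)$.) So the combinatorial input is that $J_0$ already exhibits, within a disc of radius just past $j_{0,2}$, a nodal domain of the topological type counted by $\mu(1)$.

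Next I would invoke the quantitative implicit function theorem exactly as in the proof of Theorem \ref{thm:main_0}: because $J_0'(j_{0,k}) \neq 0$ for $k = 1,2$ (the zeros of $J_0$ are simple), the gradient of $u_0$ is non-vanishing on each nodal circle, so there is a $C^1$-neighbourhood of $u_0$ — of some explicit radius controlled by $\min_k |J_0'(j_{0,k})|$ and the curvature of the nodal set — within which every function has a zero set that is a small perturbation of these two circles, still nested, still enclosing a one-holed domain. The quantity $S = S(\delta)$ is precisely the bookkeeping device that converts a bound on the Fourier coefficients of the perturbation (the higher Bessel modes $J_n$, $n \geq 1$, and their radial derivatives) into a $C^1$ bound on the perturbation over $B(j_{0,2} + \delta)$; the only change from Theorem \ref{thm:main_0} is that the suprema are now taken over the larger ball $B(j_{0,2} + \delta)$, which is already accounted for in the hypothesis ``$S$ as in Theorem \ref{thm:main_0} with $j_{0,1}$ replaced by $j_{0,2}$.''

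The probabilistic half is then verbatim the same computation as for Theorem \ref{thm:main_0}. Writing the random plane wave in polar coordinates as a series $\sum_n (a_n \cos n\theta + b_n \sin n\theta) J_n(r)$ with independent Gaussian coefficients, the event that the restriction to $B(j_{0,2}+\delta)$ is $\varepsilon$-close in $C^1$ to a multiple of $J_0$ is implied by the event that $a_0$ is large (of size at least $\sqrt{\pi}S/\varepsilon$ after the appropriate normalization) while all the other coefficients are small relative to $a_0$; the integral $\int_{\sqrt{\pi}S/\varepsilon}^\infty (1 - \sqrt{\pi}S/(\varepsilon x)) e^{-x^2/2}\,dx$ is exactly the Gaussian probability of this event, and the factor $\frac{1}{c_{\rm NS}}\frac{1}{(j_{0,2}+\delta)^2\sqrt{6\pi}}$ comes from dividing by the Nazarov–Sodin constant $c_{\rm NS}$ (the expected number of nodal domains per unit area) and accounting for the area $\pi(j_{0,2}+\delta)^2$ of the ball, together with an ergodic/translation-averaging argument showing that a positive-density set of unit balls in the plane can independently host such a configuration. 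The main obstacle — and the only place where Theorem \ref{thm:main_1} is genuinely harder than Theorem \ref{thm:main_0} — is controlling the implicit function theorem on the \emph{larger} ball $B(j_{0,2}+\delta)$: one must check that the perturbation does not create spurious extra components of the zero set that would either merge the two nodal circles or subdivide the annulus, and that the outer circle $r = j_{0,2}$ stays at definite distance from the boundary $r = j_{0,2}+\delta$ of the ball; since $S$ grows with the radius, the threshold $\sqrt{\pi}S/\varepsilon$ is larger and the resulting bound on $\mu(1)$ is correspondingly weaker, but the structure of the argument is unchanged.
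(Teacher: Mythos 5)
Your overall strategy is the same as the paper's: target $J_0(r)$ on a region containing its first two zeros, use the quantitative implicit function theorem (Proposition~\ref{prop:approx}) to force zeros of $F=J_0+p/\xi_0$ near the circles $r=j_{0,1}$ and $r=j_{0,2}$, bound the probability of the $C^1$ approximation by Markov's inequality applied to the Bessel series (Proposition~\ref{prop:prob}), and convert to a density via hexagonal packing and division by $c_{\rm NS}$. However, there is a genuine gap at precisely the step that distinguishes $\mu(1)$ from $\mu(0)$: you assert that $C^1$-closeness to $J_0$, via the IFT, implies the perturbed zero set is ``a small perturbation of these two circles, still nested, still enclosing a one-holed domain.'' The quantitative IFT gives no such thing: it only produces \emph{a} zero in each small ball $B(x_k,\delta)$; it does not show that the nodal lines through these zeros are closed curves winding once around the origin, nor does it exclude additional nodal components inside the thin annuli $\{|J_0|\le\varepsilon\}$ or inside the inner disk $r<j_{0,1}-\delta$ (the bound $|F-J_0|<\varepsilon$ only forbids zeros where $|J_0|>\varepsilon$, as in (\ref{eq:slab})). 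Any such extra component, or a nodal line of winding number zero, would change the connectivity, and for $\mu(0)$ this did not matter (one can always pass to an innermost simply connected domain), whereas for $\mu(1)$ it is the whole point. You name this obstacle in your last paragraph but then claim ``the structure of the argument is unchanged,'' offering no mechanism to resolve it.

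The paper supplies that missing mechanism in Proposition~\ref{prop:stay1}: besides the confinement of nodal lines to the annuli $a_k(\varepsilon)<r<b_k(\varepsilon)$ and the separation condition (\ref{eq:delta-diffs1}), it invokes the Faber--Krahn minimal-volume bound for nodal domains of $(\Delta+1)F=0$ (Proposition~\ref{prop:minvol}): any nodal domain has area at least $\pi j_{0,1}^2$, while the conditions (\ref{eq:minvol1})--(\ref{eq:minvol2}) make the annuli $\{a_k(\varepsilon)<r<b_k(\varepsilon)\}$ (and the inner disk) too small in area to contain a whole nodal domain. This simultaneously rules out spurious closed nodal lines, forces $N_1$ and $N_2$ to wind once around the origin, and shows they are the only boundary components, so the domain between them has connectivity exactly $1$. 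An alternative (which the paper mentions but does not carry out) would be to strengthen the $C^1$ control so that, say, $\partial_r F\neq 0$ throughout the thin annuli, making the zero set a graph over the angular variable; but some argument of this kind must be made explicit, and your proposal contains neither.
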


Each theorem is proved by finding a deterministic function with a specified nodal topology, and estimating the probability that a random wave will be close enough 
 to this function for its zero set to also enjoy that topology. 
In both cases, the target is the Bessel function $J_0(r)$ in polar coordinates, as in the \emph{barrier method} initiated by Nazarov and Sodin \cite{NS2009}. In their argument, it was sufficient for the approximation to force a sign change to conclude that there must be a nodal domain in some region. To control the topology of the nodal domain, a more quantitative approximation in the $C^1$ norm is needed. 
The contributions of this article are to make fully explicit how strong an approximation is enough, and how likely such an approximation is to occur. For generalizations and further applications of the barrier method, see \cite{NS2016}, \cite{So12}, \cite{GW14}, and \cite{LL15}. 
 
The factor $c_{ \rm NS }$ appearing in Theorems~\ref{thm:main_0} and \ref{thm:main_1} is the \emph{Nazarov-Sodin constant} for the monochromatic ensemble. It represents the total number of nodal domains per unit volume, to which we compare the number of domains with a particular topology.
An upper bound for $c_{{\rm NS}}$ is needed to deduce a lower bound for $\mu(0)$ and $\mu(1)$.
A simple one can be extracted from the critical points of $F$, noting that $F$ is almost surely smooth and must achieve a local maximum or minimum inside each of its nodal domains.  The expected number of critical points can be computed using the Kac-Rice formula, giving the following result:
\begin{equation} \label{e:cNS_ub}
c_{\rm NS} \leq \frac{1}{2 \pi \sqrt{3}} =0.091888 \ldots
\end{equation}
For details, see Beliaev-Cammarota-Wigman \cite[Proposition 1.1]{BCW19}, Nicolaescu \cite{Nic15}, or Nastasescu \cite[Theorem 3.1]{N11}.

The role of the parameters $\delta$ and $\varepsilon$ is that if two functions differ by at most $\varepsilon$ in the $C^1$ norm on some region, then they must have nodal lines within a distance $\delta$ of each other.
The precise meaning of `sufficiently small' allowed in Theorems~\ref{thm:main_0} and \ref{thm:main_1} is given in Propositions~\ref{prop:stay} and \ref{prop:stay1}. 
We choose $\delta=1/2$ and the largest $\varepsilon$ allowed in each case, roughly $\varepsilon \approx 1/20$.
These choices, together with (\ref{e:cNS_ub}), lead to the following corollary.

\begin{corollary} \label{c:numerical_bounds}
$\mu(0) > 10^{-1282}$ and $\mu(1) > 10^{-4535}$.
\end{corollary}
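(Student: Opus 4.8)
The plan is to deduce the corollary from Theorems~\ref{thm:main_0} and \ref{thm:main_1} by inserting a concrete admissible choice of the parameters $\delta$ and $\varepsilon$, together with the upper bound (\ref{e:cNS_ub}) for $c_{\rm NS}$, and then estimating the resulting expression from below. The two cases are structurally identical, the only difference being that the bound for $\mu(0)$ uses the first zero $j_{0,1} = 2.4048\ldots$ of $J_0$ (hence a ball of radius $j_{0,1} + 1/2 < 3$) while the bound for $\mu(1)$ uses the second zero $j_{0,2} = 5.5200\ldots$ (hence a ball of radius $j_{0,2} + 1/2 < 6.1$). First I would fix $\delta = 1/2$ and appeal to Propositions~\ref{prop:stay} and \ref{prop:stay1} both to confirm that this choice is admissible and to read off the largest value of $\varepsilon$ it allows, which is approximately $1/20$. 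Taking $\varepsilon$ as large as possible is essential, because the lower limit $\sqrt{\pi}S/\varepsilon$ of the Gaussian integral in the theorems controls the entire order of magnitude of the answer.

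The substantive computation is then an explicit upper bound $S \le S_0$ for $S = \sum_{n \ge 1}\bigl( \sup_{B(R)}|J_n| + \sup_{B(R)}|J_n'| + n\sup_{B(R)}|J_n(r)/r| \bigr)$, where $R = j_{0,1} + 1/2$ (resp.\ $R = j_{0,2} + 1/2$). For the finitely many small indices $n$ one evaluates the three suprema numerically with rigorous error bounds; here it is convenient that whenever the first positive critical point $j'_{n,1}$ of $J_n$ exceeds $R$ the function $J_n$ is increasing on $[0,R]$, so that the supremum is simply $J_n(R)$. For the tail one uses the classical inequality $|J_n(r)| \le (r/2)^n/n!$ (from Poisson's integral representation), which also controls $|J_n'| \le (|J_{n-1}| + |J_{n+1}|)/2$ and $|J_n(r)/r|$, and whose super-exponential decay in $n$ makes $\sum_{n \ge N}(\cdots)$ negligible for a modest cutoff $N$. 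Writing $a_0 = \sqrt{\pi}S_0/\varepsilon$, which is large, I would then lower-bound the integral $\int_{a}^\infty (1 - a/x)e^{-x^2/2}\,dx$ with $a \le a_0$: since the integrand is nonnegative, restricting the range to a short interval just above $a$, say $[a, a + 1/a]$, and bounding the integrand from below there gives a bound of the form $c\,e^{-a^2/2}/a^3$ with an explicit $c > 0$, and this expression is decreasing in $a$, so $a$ may be replaced by $a_0$. Combining everything with (\ref{e:cNS_ub}) yields $\mu(0) \ge 2\pi\sqrt{3}\,\bigl((j_{0,1}+1/2)^2\sqrt{6\pi}\bigr)^{-1}\, c\, e^{-a_0^2/2}/a_0^3$, and substituting the numerical values produces the claimed $10^{-1282}$; the identical computation with $j_{0,2}$, hence a larger $R$, a larger $S_0$, and a larger $a_0$, gives $10^{-4535}$.

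The main obstacle is obtaining a sufficiently tight bound on $S$. Because the dominant term in the exponent of the final estimate is $a_0^2/2 = \pi S_0^2/(2\varepsilon^2)$, and $1/(2\varepsilon^2) \approx 200$, any slack in the upper bound for $S$ is both squared and substantially amplified, so a genuine numerical estimate of an infinite sum of suprema of Bessel functions and their derivatives over a fixed ball is required, not merely a crude convergence argument. A secondary point needing care is that the threshold for $\varepsilon$ furnished by Propositions~\ref{prop:stay} and \ref{prop:stay1} itself depends on $S(1/2)$, so the estimate for $S$ must be in place before $\varepsilon$ can be pinned down; once both are fixed, the remaining steps — the Gaussian tail bound and the final arithmetic — are routine.
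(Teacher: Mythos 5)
Your overall strategy coincides with the paper's: fix $\delta=1/2$, take the largest $\varepsilon$ permitted by Propositions~\ref{prop:stay} and \ref{prop:stay1}, bound $S$ by a finite computation plus a Bessel-tail estimate, lower-bound the Gaussian tail integral, and divide by the bound (\ref{e:cNS_ub}) on $c_{\rm NS}$. The genuine gap is the domain over which you take the suprema defining $S$. You sup over the full ball $B(R)$ with $R=j_{0,1}+\tfrac12$ (resp.\ $j_{0,2}+\tfrac12$) and assert that for the relevant small $n$ the suprema sit at $r=R$. The constants in Corollary~\ref{c:numerical_bounds} are obtained in Sections~\ref{sec:proof0} and \ref{sec:proof1} by taking the suprema over the annulus $A=\{j_{0,1}-\delta<r<j_{0,1}+\delta\}$ (resp.\ $\{j_{0,1}-\delta<r<j_{0,2}+\delta\}$), which is the domain that actually enters Propositions~\ref{prop:stay}, \ref{prop:stay1} and \ref{prop:prob}; moreover, the maxima for small $n$ are typically not at $r=R$ but at the left endpoint $j_{0,1}-\delta$ or at interior critical points (this is precisely the bookkeeping in Table~\ref{table:contributions} and in the appendix code), and your rule ``$J_n$ increasing on $[0,R]$'' addresses only $|J_n|$ (and already fails for $|J_1|$), not $|J_n'|$ or $n|J_n(r)/r|$. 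Over the ball the $n=1$ terms satisfy $\sup|J_1'|=\sup|J_1(r)/r|=\tfrac12$, attained at the origin, so your $S_0$ would come out near $4.09$ instead of the paper's $3.73$ in the $\mu(0)$ case. Since, as you yourself observe, the answer scales like $\exp(-\pi S^2/(2\varepsilon^2))$ with $\varepsilon\approx 0.086$, this inflation costs hundreds of orders of magnitude (of order $10^{-1540}$ instead of $10^{-1282}$, and correspondingly worse for $\mu(1)$), so the computation you describe would not recover the stated constants: you must run the argument with the annulus norm $\|\cdot\|_A$ from the proofs, not with the statement of Theorems~\ref{thm:main_0} and \ref{thm:main_1} read literally.

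Two smaller remarks. First, the admissible $\varepsilon$ does not depend on $S$: it is fixed by $\delta$ and $\inf|J_0'|$ through (\ref{eq:eps-delt}), resp.\ (\ref{eq:eps-b1})--(\ref{eq:eps-b2}), so the circularity you worry about is not present; $S$ enters only through Proposition~\ref{prop:prob}. Second, your elementary tail bound $\int_a^\infty(1-a/x)e^{-x^2/2}\,dx\geq c\,e^{-a^2/2}/a^3$ is of the correct order (the paper evaluates the same integral via incomplete gamma functions), but with a bound of this size the margin over the stated exponents is razor-thin even after switching to the annulus-based $S$, so the final arithmetic must be carried out explicitly rather than treated as routine.
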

Corollary~\ref{c:numerical_bounds} gives the first rigorous numerical lower bounds for any atoms of the measure $\mu$. These are still very conservative underestimates of the values
\[
\mu(0) \approx 0.9117 \qquad \text{and} \qquad \mu(1) \approx 0.0514
\]
suggested by simulations of Barnett and Jin \cite{BarJin} (reported in \cite[\S 1.4]{SW18}).
There are two main reasons for the discrepancy, which we discuss further in Section~\ref{sec:conc}. Deterministically, a given nodal topology can easily occur even if the wave is not particularly close to the target $J_0(r)$. This aspect could be improved by incorporating multiple targets. Probabilistically, we have input only the simplest tools in our estimate of how likely the approximation is to occur. Using deeper results on suprema of Gaussian processes would lead to improved lower bounds.
To begin, we have opted for simplicity. 

Underpinning our method for Theorems~\ref{thm:main_0} and \ref{thm:main_1} is an effective form of the implicit function theorem, similar to one used by Cohn-Kumar-Minton \cite{CKM} in the context of sphere packing. Our version Lemma~\ref{lem:scalar} is a simple and sharp criterion for a non-linear equation to have a solution. We hope it will be useful in contexts beyond our own. 
In Section~\ref{sec:symmetrization}, we obtain another bound by a symmetrization method adapted from \cite{IR19}, which Ingremeau-Rivera used to give a lower bound for $c_{{\rm NS}}$.
It is more efficient numerically than Theorems~\ref{thm:main_0} and \ref{thm:main_1}, but relies more heavily on the simple topologies represented by $\mu(0)$ and $\mu(1)$, and on special properties of the monochromatic random wave that would not generalize to other ensembles. 

\begin{theorem} \label{thm:sym}
For any $T > 0$ and $j_{0,1} < r < j_{0,2}$, 
\begin{equation}
\mu(0) \geq \frac{2}{\sqrt{12} c_{{\rm NS}}} r^{-2} \frac{1}{\sqrt{2\pi}} \int_{T}^{\infty} \left(1 - \frac{\sqrt{2}}{2} \frac{r}{\sqrt{1-J_0(r)^2}} \exp\left( \frac{-t^2 J_0(r)^2}{2(1-J_0(r)^2)} \right) \right) e^{-t^2/2} dt.
\end{equation}
Consider $r_1 < \ldots < r_M$ satisfying $j_{0,1} < r_1 < \sqrt{2} j_{0,1}$, $j_{0,2} < r_M < j_{0,3}$, and $r_k^2 - r_{k-1}^2 < j_{0,1}^2$ for $2 \leq k \leq M$.
For any such parameters, and any $T > 0$,
\begin{equation}
\mu(1) \geq \sqrt{2\pi} r_M^{-2}  \int_T^{\infty} \left( 1 - \frac{\sqrt{2}}{2} \sum_k  \frac{r_k}{\sqrt{1-J_0(r_k)^2} } \exp\left( \frac{-t^2 J_0(r_k)^2 }{2(1-J_0(r_k)^2)}\right) \right) e^{-t^2/2} dt.
\end{equation}
\end{theorem}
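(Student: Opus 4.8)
The plan is to adapt the symmetrization of Ingremeau--Rivera \cite{IR19}, exploiting the mean-value property of the monochromatic wave. Since $F$ almost surely solves $\Delta F + F = 0$, the mean-value identity for the Helmholtz equation gives, for every base point $x_0$ and radius $\rho$, that the rotational average $\tfrac{1}{2\pi}\int_0^{2\pi} F(x_0+\rho\omega(\theta))\,d\theta$ equals $F(x_0)J_0(\rho)$; hence $\widetilde F(y):=F(y)-F(x_0)J_0(|y-x_0|)$ is a centered Gaussian field, uncorrelated with and so independent of the Gaussian $F(x_0)$. Consequently, conditioned on $F(x_0)=t$, the restriction of $F$ to the circle $C_\rho(x_0):=\{|x-x_0|=\rho\}$ is a stationary Gaussian process with constant mean $tJ_0(\rho)$, variance $1-J_0(\rho)^2$, and tangential-derivative variance $\rho^2/2$; the latter two are unaffected by the conditioning, since the tangential derivative of $F$ along $C_\rho$ is uncorrelated with $F(x_0)$ and $\var(\partial_u F)=\tfrac12$ for any unit vector $u$. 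The argument then has three steps: a deterministic topology criterion, a conditional-probability bound, and a packing/ergodicity argument.

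Deterministic step. For $\mu(0)$: if a Morse function $f$ satisfies $f(x_0)>0$ and $f<0$ on $C_r(x_0)$ for some $j_{0,1}<r<j_{0,2}$, then the $\{f>0\}$-component of $x_0$ is compactly contained in $D(x_0,r)$, and descending successively into a hole of the current domain --- each hole necessarily containing a nodal domain, of which only finitely many lie in $D(x_0,r)$ by analyticity of $F$ --- terminates at an innermost, hence simply connected, nodal domain still inside $D(x_0,r)$. For $\mu(1)$: if moreover $f(x_0)>0$, $f$ carries the sign of $J_0$ on each $C_{r_k}(x_0)$ (so $f<0$ on $C_{r_1},\dots,C_{r_{M-1}}$ and $f>0$ on $C_{r_M}$), and $r_1^2<2j_{0,1}^2$ together with $r_k^2-r_{k-1}^2<j_{0,1}^2$ for $2\le k\le M$, then the $\{f<0\}$-component $V$ containing $C_{r_1}$ is a nodal domain with exactly one hole, compactly contained in $D(x_0,r_M)$. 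The engine is the Faber--Krahn inequality: $f$ is the ground state of each of its nodal domains, so every nodal domain of a solution of $\Delta f+f=0$ has area at least $\pi j_{0,1}^2$. The bound $r_1^2<2j_{0,1}^2$ then prevents $D(x_0,r_1)$ from containing two nodal domains, making the one through $x_0$ simply connected and unique in that disk, while $r_k^2-r_{k-1}^2<j_{0,1}^2$ prevents a nodal domain from fitting inside any single annular slot $\{r_{k-1}<|x-x_0|<r_k\}$, which forces the circles $C_{r_1},\dots,C_{r_{M-1}}$ into a common component $V$ and forbids $V$ from having a hole outside $C_{r_1}$, so that the hole around $x_0$ is its only one. \textbf{This topological bookkeeping for $\mu(1)$ --- checking by planar separation arguments, together with the area bound and the almost-sure Morse property of $F$, that $V$ has no spurious holes --- is the main obstacle of the proof.}

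Conditional-probability step. Condition on $F(x_0)=t$ with $t\ge T>0$. By the mean-value identity the average of $F$ over $C_{r_k}$ equals $tJ_0(r_k)$, of a definite sign; so whenever $F$ fails to have the sign of $J_0(r_k)$ somewhere on $C_{r_k}$ it also takes the opposite sign (that of the average) elsewhere on $C_{r_k}$, producing an upcrossing of $0$ by $F|_{C_{r_k}}$. Hence the conditional probability of a wrong sign on $C_{r_k}$ is at most the conditional expected number of upcrossings of $0$ by $F|_{C_{r_k}}$, which by the Kac--Rice formula and the variances recorded above equals $\tfrac{\sqrt2}{2}\,r_k\,(1-J_0(r_k)^2)^{-1/2}\exp\!\big(-t^2 J_0(r_k)^2/(2(1-J_0(r_k)^2))\big)$. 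A union bound over $k$ and integration of the standard normal density of $F(x_0)$ over $[T,\infty)$ give, for the event $E(x_0)$ that $F(x_0)\ge T$ and $F$ has the prescribed signs on all the circles,
\[
\prob\big(E(x_0)\big)\ \ge\ \frac{1}{\sqrt{2\pi}}\int_T^\infty\Big(1-\frac{\sqrt2}{2}\sum_k\frac{r_k}{\sqrt{1-J_0(r_k)^2}}\,e^{-t^2 J_0(r_k)^2/(2(1-J_0(r_k)^2))}\Big)e^{-t^2/2}\,dt,
\]
with a single term (take $r_k=r$) in the case of $\mu(0)$.

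Packing and ergodicity. Place the base points on a triangular lattice of minimal distance $2r$ (respectively $2r_M$), so the disks $D(x_0,r)$ are pairwise disjoint and the lattice has density $1/(\sqrt{12}\,r^2)$. By the deterministic step each base point with $E(x_0)$ contributes a distinct nodal domain of the prescribed topology inside its disk, and so does each base point satisfying the corresponding event for $-F$; the two events being disjoint, each disk contributes at least $2\,\prob(E(0))$ such domains in expectation. As the random plane wave is ergodic --- indeed mixing, its spectral measure being arclength on the circle and hence non-atomic --- this expectation is realized almost surely as a spatial density; dividing by the almost-sure density $c_{\rm NS}$ of all nodal domains yields $\mu(0)\ge \tfrac{2}{\sqrt{12}\,r^2 c_{\rm NS}}\,\prob(E(0))$, the first assertion. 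For $\mu(1)$ one additionally invokes the bound \eqref{e:cNS_ub} on $c_{\rm NS}$ to replace $2/(\sqrt{12}\,c_{\rm NS})$ by $2\pi$ and uses $2\pi/\sqrt{2\pi}=\sqrt{2\pi}$, giving the second assertion.
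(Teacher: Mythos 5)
Your proposal is correct and follows essentially the same route as the paper: the mean-value/symmetrization identity $S_zF(x)=F(z)J_0(|x-z|)$ to fix the signs on the circles, a Kac--Rice estimate conditioned on the central value combined with a union bound over the radii, the Faber--Krahn minimal-area bound to rule out spurious nodal domains in the annular slots, hexagonal packing of disjoint disks together with the Nazarov--Sodin/Sarnak--Wigman density framework, and the upper bound on $c_{\rm NS}$ to obtain the constant $\sqrt{2\pi}$ in the $\mu(1)$ bound. Your two variants --- bounding the wrong-sign event by the expected number of upcrossings rather than by half the expected number of zeros, and getting the factor $2$ from the disjoint events for $\pm F$ rather than from integrating over both Gaussian tails --- are numerically identical to the paper's corresponding steps.
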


\begin{corollary} \label{cor:sym-numbers}
$\mu(0) > 10^{-5}$ and $\mu(1) > 10^{-247}$.
\end{corollary}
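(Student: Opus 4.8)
The plan is to specialize Theorem~\ref{thm:sym} to explicit choices of the free parameters and to bound the resulting integrals from below, feeding in the estimate (\ref{e:cNS_ub}) for $c_{\rm NS}$ wherever it appears.

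For $\mu(0)$, consider the first inequality of Theorem~\ref{thm:sym}. For fixed $r \in (j_{0,1}, j_{0,2})$ the integrand $\bigl(1 - \tfrac{\sqrt2}{2}\tfrac{r}{\sqrt{1-J_0(r)^2}}\exp(\tfrac{-t^2 J_0(r)^2}{2(1-J_0(r)^2)})\bigr)e^{-t^2/2}$ is negative for small $t$ and positive once $t$ exceeds the unique positive root $T_0(r)$ of $\tfrac{r}{\sqrt{2(1-J_0(r)^2)}} = \exp\bigl(\tfrac{t^2 J_0(r)^2}{2(1-J_0(r)^2)}\bigr)$, namely $T_0(r)^2 = \tfrac{2(1-J_0(r)^2)}{J_0(r)^2}\log\tfrac{r}{\sqrt{2(1-J_0(r)^2)}}$; so the optimal cutoff is $T=T_0(r)$. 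One then maximizes over $r$: shrinking $r$ improves the factor $r^{-2}$, but letting $|J_0(r)|$ approach $0$ near the endpoints $j_{0,1},j_{0,2}$ sends $T_0(r)\to\infty$, so the best $r$ sits in the interior near the minimizer $r\approx 3.83$ of $J_0$, where $|J_0(r)|\approx 0.40$ and $T_0(r)\approx 3.3$. With this choice the prefactor $\tfrac{2}{\sqrt{12}\,c_{\rm NS}\sqrt{2\pi}}\,r^{-2}$ is explicit via (\ref{e:cNS_ub}), and a crude lower bound for $\int_{T_0(r)}^{\infty}(1-\cdots)e^{-t^2/2}\,dt$ --- for instance by discarding the range where the parenthetical factor is small and applying a standard Gaussian tail estimate to the remainder --- yields $\mu(0) > 10^{-5}$.

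For $\mu(1)$, one must exhibit radii $j_{0,1} < r_1 < \sqrt2\,j_{0,1}$, $j_{0,2} < r_M < j_{0,3}$ with $r_k^2 - r_{k-1}^2 < j_{0,1}^2$ for all $k$. Since $r_M^2 - r_1^2$ necessarily exceeds $j_{0,2}^2 - 2 j_{0,1}^2$ while each increment of $r_k^2$ is capped by $j_{0,1}^2$, a short chain (of length $M$ a handful, around five) already suffices, and one writes down an explicit one, e.g.\ with the $r_k^2$ roughly equally spaced. The catch is that such a chain is forced to place some $r_k$ near the zero $j_{0,2}$ of $J_0$ --- the spacing bound being just a touch too tight to leap over the whole region where $|J_0|$ is small --- so that term of the sum $\tfrac{\sqrt2}{2}\sum_k \tfrac{r_k}{\sqrt{1-J_0(r_k)^2}}\exp\bigl(\tfrac{-t^2 J_0(r_k)^2}{2(1-J_0(r_k)^2)}\bigr)$ decays very slowly in $t$. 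Consequently the cutoff $T$ at which the integrand turns positive is large, of order a few tens, and the Gaussian tail $\int_T^{\infty}e^{-t^2/2}\,dt$ contributes a factor of size $\sim 10^{-247}$; multiplying by the explicit prefactor $\sqrt{2\pi}\,r_M^{-2}$ gives the stated bound.

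The main obstacle is the numerical optimization, especially for $\mu(1)$: one wants the chain $r_1 < \dots < r_M$ chosen so that $\min_k J_0(r_k)^2$ is as large as the spacing constraints permit, which requires tracking the sign and magnitude of $J_0$ on each relevant interval and juggling $M$, the endpoint positions, and the size of $r_M$. Turning this into a rigorous proof needs explicit (e.g.\ interval-arithmetic) bounds on the Bessel values $J_0(r_k)$ entering the exponents, a lower bound on the sign-change cutoff $T$, and a clean lower bound for $\int_T^{\infty}(1-\cdots)e^{-t^2/2}\,dt$; none of these is deep, but the bookkeeping is what makes the final constants as conservative as they are.
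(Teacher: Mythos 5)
Your proposal is correct and follows essentially the same route as the paper: the paper specializes Theorem~\ref{thm:sym} with the single radius $r=j_{1,1}=3.8317$ (the critical point of $J_0$) and the optimal cutoff $T\approx 3.21$ from (\ref{e:T-optimal}) for $\mu(0)$, and with $M=5$ radii $r_k\to\sqrt{k+1}\,j_{0,1}$ --- exactly your ``equally spaced in $r_k^2$'' chain, which indeed forces a radius with $|J_0(r_k)|$ small near $j_{0,2}$ and hence a large $T$ --- for $\mu(1)$, then evaluates the tail integrals via the incomplete gamma function. The only difference is that the paper carries out the numerics explicitly (in PARI/GP), which is the bookkeeping you correctly identify as the remaining work.
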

The atom $\mu(0)$ is significantly easier to treat than $\mu(h)$ for $h>0$ since any nodal domain contains a simply connected domain nested within. For $\mu(1)$, the additional constraints one must impose to guarantee the correct topology lead to a smaller lower bound.

We work directly with a series representation (\ref{e:plane_wave}) for the random wave, in which the main term is $\xi_0 J_0(r)$ where $\xi_0$ is a standard Gaussian, plus another term that will be negligible when $\xi_0$ is large enough. Figure~\ref{fig:barriers} illustrates the effect of increasing $\xi_0$, which makes the nodal lines of $F$ more and more similar to those of $J_0(r)$. In both of our approaches, the goal is to quantify how large $\xi_0$ must be. The method of uniform approximation as in Theorems~\ref{thm:main_0} and \ref{thm:main_1} applies roughly when $|\xi_0| > S/\varepsilon$, whereas the symmetrization method leading to Theorem~\ref{thm:sym} requires $|\xi_0 | > T$. Numerically, for our ultimate choice of parameters, the former requires $|\xi_0| > 43.2831 $  for $\mu(0)$ and $|\xi_0| > 81.4845 $ for $\mu(1)$, compared to $|\xi_0| > 3.2087 $ or $|\xi_0| > 41.9287$ for the latter.

\begin{figure}[t] \label{fig:barriers}
\centering
\subfigure[$\xi_0=0$]{\includegraphics[width=0.3\textwidth]{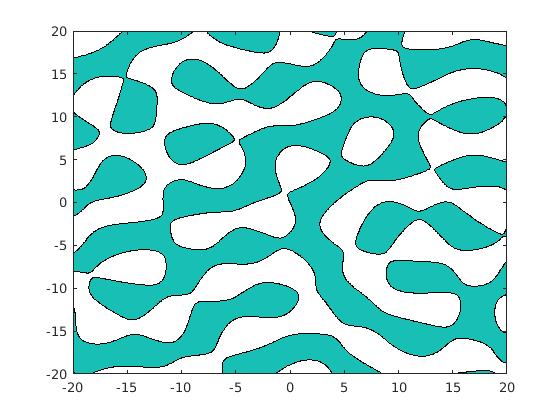}}
\hfill
\subfigure[$\xi_0=3$]{\includegraphics[width=0.3\textwidth]{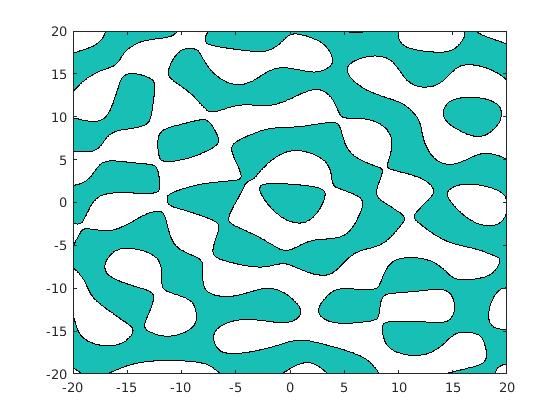}}
\hfill
\subfigure[$\xi_0=5$]{\includegraphics[width=0.3\textwidth]{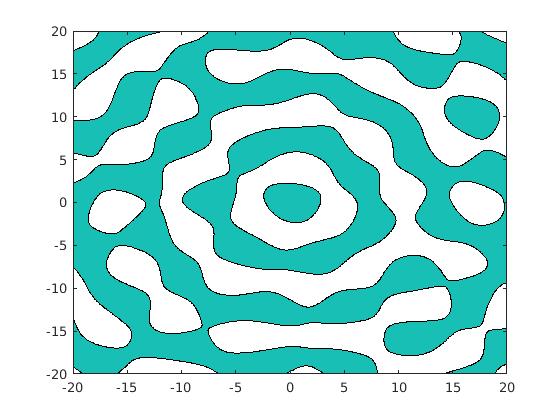}}
\label{fig:green}
\caption{Nodal domains of $\xi_0 J_0(r) + \sum_n (\xi_n \cos{n\theta} + \eta_n \sin{n \theta} )J_n(r)$ where $\xi_0 = 0,3,5$ from left to right and the other coefficients are a sample of random Gaussians (the same sample in each vignette). As $\xi_0$ increases, the nodal lines approximate the concentric circles on which $J_0(r)$ vanishes. The sum is truncated to 100 terms and the functions are evaluated at $500 \times 500$ points in the box $[-20,20] \times [-20,20]$. 
}
\end{figure}

There are other ways to work with the monochromatic random wave, in particular by manipulating its correlation function or viewing it as a Fourier transform of uniform noise on the unit circle. See \cite{SW18} for the latter and \cite{CS18} for the use of more general machinery leading to a proof that the higher-dimensional counterparts of the connectivity measures have full support among all possible topological types. Also in higher dimensions, Enciso and Peralta-Salas have given constructions of eigenfunctions with zero sets containing very general topologies \cite{EPS13}, which can be perturbed to show that those configurations occur with positive probability.
The results we state for connectivity measures on $\R^2$ also apply to similar measures on manifolds, in particular corresponding to random spherical harmonics on $S^2$ or to monochromatic random waves in geometries satisfying a non-self-focal condition. See \cite{CH15} and \cite[Theorem 1.1]{SW18} for more on the scaling argument that relates these measures to the ones in the plane.


In other ensembles of random functions, Gayet-Welschinger \cite{GW14, GW15} have also obtained explicit estimates for topological statistics of zero sets. Instead of Laplace eigenfunctions in the limit of a growing region in $\R^2$ (equivalently, growing eigenvalue on a fixed region), they study sections of high tensor powers of a line bundle on a fixed projective manifold. For example, one can think of homogeneous polynomials of high degree on projective space. The covariance function (defined below) behaves quite differently in these ensembles: for the monochromatic wave, it is oscillatory and decays only slowly, whereas in the more geometric setting it is positive and decays rapidly thanks to the asymptotics of the Bergman kernel.
The constructions one has available for eigenfunctions are also more limited than, say, arbitrary polynomials, which leads to the manipulations with Bessel functions in the present article.

A great inspiration for further work on nodal domains of random waves, and in particular for the foundational paper \cite{NS2009}, was the study of Bogomolny and Schmit \cite{BS02}. They proposed a striking bond percolation model for nodal lines of random Dirichlet eigenfunctions.  An intriguing open question, to which the percolation model suggests an answer, is the behaviour of $\mu(h)$ for large $h$. The data for $h \leq 26$ reported in \cite{SW18} suggest that $\mu(h)$ follows a power law $h^{-\gamma}$ with $\gamma$ slightly larger than 2. This would be consistent with $\gamma = 187/91$, which is the \emph{Fisher exponent} governing the area distribution of clusters in critical percolation.  The Fisher exponent arises in percolation from arguments that have no obvious counterpart for nodal domains, and the area of a domain is only a proxy for its topology, but the data for $h \leq 26$ are consistent with this prediction for $\mu(h)$.  Barnett's webpage \cite{Bar} is an excellent source of software, pictures, and movies related to this circle of problems.

In the remainder of the introduction we describe the random plane wave in more detail, with an emphasis on some fundamental ideas introduced by Nazarov and Sodin; see the Bourbaki article of Anantharaman \cite{A16} for a more in-depth summary.  The connectivity measure $\mu$, appearing in the statements of Theorem~\ref{thm:main_0} and Theorem~\ref{thm:main_1}, is defined more precisely in Section \ref{s:connectivity}.

\subsection{Nodal sets and Gaussian random functions}

\noindent The primary focus of this note is the zero set $\{ F=0 \}$, also referred to as the \emph{nodal set}, of a random function $F: \R^2 \rightarrow \R$. Its connected components are called \emph{nodal lines}, while the connected components of $\{ F \neq 0 \}$ are called \emph{nodal domains}.
The random wave $F$ is a \emph{Gaussian field} in the sense that for any points $x_1, \ldots, x_n$, the random vector $F(x_1),\ldots, F(x_n)$ follows a multivariate normal distribution. The field $F$ is \emph{centered}, meaning $\E[F(x)] = 0$ at each point $x$. 
This singles out the zero set among all the level sets $\{ F = l \}$, $l \in \R$.
For a centered Gaussian, the only information left to specify is the covariance between $F(x)$ and $F(y)$ for each pair of points $x, y \in \R^2$, say
\begin{equation} \label{e:correlation}
K(x,y) = \E[F(x)F(y)].
\end{equation}
This \emph{covariance function} characterizes the statistical distribution of $F$. It is assumed that the law of $F$ is invariant under translations and rotations, so that $K(x,y)$ depends only on $|x-y|$, a manifestation of the \emph{stationarity} property. Finally, the last assumption is that $K(x,x)=1$, that is, the value $F(x)$ at any point is a standard Gaussian of mean zero and unit variance. 
Further relevant information for smooth Gaussian functions, and in particular results concerning spectral functions and covariance functions, can be found in \cite[Appendix A]{NS2016}.

The covariance kernel studied in the present article is
\begin{equation} \label{eq:j0-covariance}
K(x,y) = J_0(|x-y|) = \frac{1}{2\pi} \int_{S^1} e^{i \lambda \cdot (x-y) } d\lambda. 
\end{equation}
This is the covariance function of the random wave model introduced by Berry in \cite{Berry77}. 
The random functions resulting from (\ref{eq:j0-covariance}) are called \emph{monochromatic} because the Fourier transform of $K$ is supported on frequencies $\lambda$ on the unit circle all of the same modulus. 
Almost surely, the resulting function $F$ is smooth and satisfies the Helmholtz equation
\[
(\Delta + 1)F = 0
\]
with $0$ being a regular value.  In particular, $F^{-1}(0)$ is almost surely a disjoint union of simple and smooth curves.  More generally, if $K$ is synthesized using only the frequencies from an algebraic hypersurface $\{ P(\lambda) = 0 \}$, then $F$ will almost surely solve the differential equation
\[
P(i \nabla) F= 0.
\]
In the monochromatic case, $P(\lambda) = \lambda_1^2 + \lambda_2^2 - 1$ and the Fourier transform of $K$ is the uniform measure on the circle $|\lambda| = 1$, normalized to have total mass 1.  In polar coordinates, the monochromatic random wave $F$ takes the explicit form
\begin{equation} \label{e:plane_wave}
F(r,\theta) = \xi_0 J_0(r) +  \sqrt{2} \sum_{n=0}^{\infty} \left( \xi_n J_n(r) \cos(n \theta) +   \eta_n J_n(r) \sin(n \theta) \right)
\end{equation}
where $(\xi_n)_{n=0}^{\infty}$ and $(\eta_n)_{n=1}^{\infty}$ are sequences of independent $N(0,1)$ random variables and $J_n$ is the $n$-th Bessel function of the first kind.
The fact that the random function defined by (\ref{e:plane_wave}) does have covariance given by (\ref{eq:j0-covariance}) is an instance of Neumann's addition theorem for Bessel functions (for which one can refer to \cite[\S 11.2]{W}).  It is convenient to rewrite (\ref{e:plane_wave}) as
 \begin{equation} \label{eq:plane_wave_complex}
F(r,\theta) = \xi_0 J_0(r) + p(r, \theta);
\end{equation}
here, 
\begin{equation} \label{e:p_pert}
p(r, \theta) :=  \sum_{0 \neq n \in \Z} \zeta_n J_{|n|}(r) e^{in\theta}
\end{equation}
where $\zeta_n = (\xi_n - i \eta_n)/\sqrt{2}$ are independent complex Gaussians for $n \geq 1$, while $\zeta_{-n} = \overline{\zeta_n}$. 
This series representation of $F$ is the source of the sums appearing as $S$ in Theorem~\ref{thm:main_0}.

For each $R>0$, the set $B(R)$ denotes the ball with center 0 and radius $R$, and $N(F,R)$ denotes the number of connected components of $F^{-1}(0)$ that are contained entirely inside of $B(R)$.  In \cite{NS2009,NS2016}, Nazarov and Sodin showed that the limit
\begin{equation} \label{e:c_NS}
c_{\rm NS} = \lim_{R \rightarrow \infty} \frac{\mathbb{E} N(F,R)}{\pi R^2}
\end{equation}
exists and is positive.  Its exact value remains mysterious.
Values for $4\pi c_{{\rm NS}}$ are commonly stated, with the factor $4\pi = \vol(S^2)$ leading to a dimensionless quantity instead of a number per unit volume.
The percolation model appearing in the work of Bogomolny and Schmit \cite{BS02} leads to an approximation
\[
c_{{\rm NS}}  \approx \frac{1}{4 \pi} \frac{3\sqrt{3}-5}{\pi} = \frac{1}{4 \pi} \times 0.0624 \dots 
\]
based on previous work of Temperley-Lieb \cite[formula (41), Table 5]{TL71} and Ziff-Finch-Adamchik \cite[equation (7)]{ZFA} .
This is an overestimate compared to the value suggested by simulations, namely $c_{{\rm NS}} \approx \frac{1}{4 \pi} \times .0589 $ first obtained by Nastasescu \cite{N11}; see also the work of Beliaev-Kereta \cite{BK13} and Konrad \cite{K12}.
As mentioned in (\ref{e:cNS_ub}), upper bounds of order $10^{-1}$ are easy to obtain from critical points using the Kac-Rice formula.
Ingremeau-Rivera \cite{IR19} gave a rigorous lower bound
\[
c_{{\rm NS}} \geq \, \frac{1.39 \times 10^{-4}}{4\pi} 
\]
Their method combines a barrier-style comparison to $J_0(|x|)$ with a new use of the Kac-Rice formula, and considerations of volumes of nodal domains. In Section~\ref{sec:symmetrization}, we modify this approach to allow some control over the topology of the resulting nodal domains.

\subsection{Semi-locality of the counting function $N(F,R)$}

Suppose $T$ is a type of nodal domain, usually of a topological nature. For instance, the ``domains of type $T$" might be the ones with any given number of holes. 
Given a type $T$, write $N_T(F, B)$ for the number of nodal domains of $f$ of type $T$ and contained inside a region $B$. For this article, the case that $B$ is a ball with growing diameter $\text{diam}(B) \rightarrow \infty$ is most interesting. It is very difficult to study large nodal domains, for instance nodal lines that cross from one part of $\partial B$ to a distant part. The small nodal domains can be captured to some extent by packing disjoint domains $D_j$ inside $B$. Indeed, if $D_j$ are disjoint domains inside $B$, then
\begin{equation} \label{eq:disjoint}
N_T(F,B) \geq \sum_{j} N_T(F, D_j)
\end{equation}
because there is no overlap between the nodal domains contained in $D_j$ and $D_k$ for $j \neq k$.
Note that only the domains contained entirely within the interior of $D$ are included in the count $N_T(F,D)$, so it is not possible for a domain in $D_j$ to merge with one in $D_k$.
Taking expectations of (\ref{eq:disjoint}) gives
\[
\E[N_T(F,B)] \geq \sum_j \E[N_T(F,D_j)].
\]
For small domains $D_j$, we hope not to lose very much by making the further estimate
\[
\E[N_T(F, D_j) ] = \sum_{k=0}^{\infty} k \prob(N_T(F,D_j)=k) \geq \prob(N_T(F,D_j) \geq 1)
\]
because it is expected that $D_j$ contains at most one nodal domain of the given type. 
This is a manifestation of the \emph{semi-local} nature of nodal domains, meaning that most of the nodal domains are small. A precise form of this key property was established by Nazarov and Sodin and enables their analysis in \cite{NS2009} and \cite{NS2016}. The semi-locality of nodal domains mitigates the kind of global problem illustrated in Figure \ref{fig:enemy}, in which long nodal lines are not captured by the system of local detectors $D_j$.

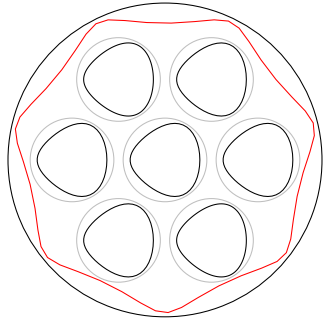
\begin{figure}[h] \label{fig:enemy}
\begin{tikzpicture}[scale=0.618]
\draw (0,0) circle (0.975*2*1.732);
\draw[lightgray] (0,0) circle (0.9);
\draw [samples=50,domain=0:360] plot ({0.75*cos(\x)},{0.125*sin(2*\x)+0.75*sin(\x)});
\foreach \a in {-1,1}
\draw[lightgray] (\a,1.732*\a) circle (0.9);
\foreach \a in {-1,1}
\draw[lightgray] (\a,-1.732*\a) circle (0.9);
\foreach \a in {-1,1}
\draw[lightgray] (-2*\a,0) circle (0.9);
\foreach \a in {-1,1}
\draw [samples=50,domain=0:360] plot ({-2*\a+0.75*cos(\x)},{0.125*sin(2*\x)+0.75*sin(\x)});
\foreach \a in {-1,1}
\draw [samples=50,domain=0:360] plot ({\a+0.75*cos(\x)},{1.732*\a+0.125*sin(2*\x)+0.75*sin(\x)});
\foreach \a in {-1,1}
\draw [samples=100,domain=0:360] plot ({\a+0.75*cos(\x)},{-1.732*\a + 0.125*sin(2*\x)+0.75*sin(\x)});
\draw[red] [samples=50,domain=0:360] plot ({1.8*1.732*cos(\x)+0.1*1.732*sin((6)*\x)},{1.8*1.732*sin(\x)+0.1*1.732*cos((6)*\x)});
\end{tikzpicture}
\caption{The outer circle bounds the ball $B$. The inner grey circles are the domains $D_j$. These contain some small nodal lines shown in black. A larger one shown in red avoids detection as it does not intersect any of the domains $D_j$. Even if it did, a local inspection inside the disks would not determine whether it is a single connected piece or several lines. Nor is it possible to tell, only from the insides of the disks $D_j$, whether the red line is contained in $B$.}
\end{figure}

In the simplest version of the argument, the domains $D_j$ are equal-sized disks as in \cite{NS2009}.
By translation-invariance, they all have an equal chance of containing a nodal domain of the desired type.
It follows that
\[
\E[N_T(F,B)] \geq (\# \ \text{of disks} )\prob(N_T(F,D) \geq 1 )
\]
where $D$ is any disk of that size. 
We normalize by the volume of $B$:
\[
\frac{\E[N_T(F,B)]}{\vol{B}} \geq \prob(N_T(F,D) \geq 1)  \frac{(\# \ \text{of disks} )}{\vol{B}}.
\]
Maximizing the number of disks $D_j$ that can fit inside $B$ is a packing problem.
As $\text{diam}(B) \rightarrow \infty$, the greatest number is given by the packing density in the plane.
Arranging the disks in a hexagonal pattern gives
\[
\frac{(\# \ \text{of disks} )}{\vol{B}} \sim \frac{\pi}{\sqrt{12}} \frac{1}{\vol{D}}
\]
and therefore
\begin{equation} \label{eq:expected-per-volume}
\liminf_{R \rightarrow \infty} \frac{\E[N_T(F,B)]}{\vol{B}} \geq \frac{ \prob(N_T(F,D) \geq 1) }{\vol{D} } \frac{\pi}{\sqrt{12}}.
\end{equation}
Thus a lower bound on the volume density of nodal domains of type $T$ is obtained, provided that one can produce such a domain in a single disk.  Note that one could replace the above $\liminf$ operation with that of $\lim$ for all types $T$ considered in this article.  The existence of this limit follows from the methods in \cite{NS2016}. In particular, \cite{SW18} established this when $T$ refers to domains with a given number of holes.

To bound $\prob( N_T(F,D) \geq 1)$ from below, it suffices to deterministically produce a function with a nodal domain of type $T$ contained in $D$, and then argue that $F$ has a positive probability of approximating this function. The implicit function theorem guarantees that if a function vanishes somewhere and has non-zero gradient, then any other function approximating it closely enough will also vanish somewhere nearby. In particular, provided the coefficient $\xi_0$ is large enough, the random wave $\xi_0 J_0(r) + p(r,\theta)$ will vanish on a curve close to any of the circles on which $J_0(r)=0$.  This is the source of the nodal lines used to prove Theorems~\ref{thm:main_0} and \ref{thm:main_1}. To achieve a numerical lower bound, the implicit relations ``closely enough" and ``nearby" must be made fully explicit.
This is achieved in Section~\ref{sec:newt}.


\subsection{Monochromatic connectivity measures and counting topologies} \label{s:connectivity}

Sarnak and Wigman \cite{SW18} proved a refinement of (\ref{e:c_NS}) towards the counting of nodal domains (or components) of $F$ of a given topological class by utilizing the overall methodology found in \cite{So12}.  The authors also found an elegant formulation of this counting in terms of a probability measure whose support is the space of nesting trees, whose construction is described more below.

We first define the spaces used in building this probability measure.  Let $\Omega(F, R)$ denote the set of nodal domains of $F$ that lie entirely inside of $B(R)$.  This set is the disjoint union of elements $\omega$ which are compact, 2-dimensional manifolds with smooth boundaries.  Furthermore, denote by $\mathcal{C}(F, R)$ the set of connected components of the nodal set of $F$ that lie entirely inside of $B(R)$.  It also follows that $\mathcal{C}(F, R)$ is a disjoint union of elements $c$ which this time are smooth, simple, and closed curves.  

With the sets $\Omega(F,R)$ and $\mathcal{C}(F, R)$ in hand, the \textit{nesting tree} $X(F,R)$ is a graph that captures the nesting relations between the elements $c \in \mathcal{C}(F, R)$ and $\omega \in \Omega(F,R)$ in the following way.  The vertices of $X(F,R)$ correspond to points $\omega \in \Omega(F,R)$ and there exists an edge between $\omega$ and $\omega'$ if these two nodal domains share a unique common boundary component $c \in \mathcal{C}(F, R)$.  That is, the edges of $X(F,R)$ are exactly the elements $c \in \mathcal{C}(F, R)$. 

Now, define a map $e$ (whose symbol is chosen in order abbreviate the word ``tree end") between the set of components $\mathcal{C}(F,R)$ and $\mathcal{T}$ which is the countable set of finite rooted trees.  Given a nodal component $c$, consider the two finite rooted trees that arise as a result of removing the edge in $X(F,R)$ corresponding to $c$.  Let $e(c) \in \mathcal{T}$ be the smaller of the two trees, therefore resembling a nesting tree end for the larger $X(F,R)$.  If the two resulting trees have equal size, then choosing either tree has no effect on the eventual probability space of interest: $(\mathcal{T}, \mu_X)$.

Since $|\mathcal{C}(F,R)| = N(F,R)$ is always finite thanks to the regularity of our fields $F$, it is possible to consider the (random) measure
\begin{equation*}
\mu_{X(F,R)} : = \frac{1}{|\mathcal{C}(F,R)|} \sum_{c \in \mathcal{C}(F,R)} \delta_{e(c)}
\end{equation*}
where $\delta_{e(c)}$ is the Dirac delta function at the point $e(c) \in \mathcal{T}$.  This measure $\mu_{X(F,R)}$ quantifies the distribution of nesting tree ends of $X(F,R)$.  Notice that if $N_X(F,G,R)$ denotes the number of edges of $X(F)$ whose corresponding tree end $e(c)$ is graph-isomorphic to a tree-type $G \in \mathcal{T}$, then one can write
\begin{equation*}
\mu_{X(F,R)} = \frac{1}{|\mathcal{C}(F,R) |} \sum_{G \in \mathcal{T}} N_X(F,G,R) \, \delta_G.
\end{equation*} 

One of the important results leading to the main theorem of Sarnak and Wigman's, namely \cite[Theorem 4.1]{SW18}, states that there exists a (random) probability measure $\mu_{X(F)}$ on $\mathcal{T}$ with $\supp \mu_{X(F)} = \mathcal{T}$.  Here, $\mu_{X(F)}$ is a total-variation limit of the $\mu_{X(F,R)}$ and takes the form of 
\begin{equation*}
\mu_{X(F)} = \frac{1}{ c_{\rm NS}} \sum_{G \in \mathcal{T}} c_{X(F)}(G) \, \delta_G
\end{equation*}
where the random variables $c_{X(F)}(G)$ are the $L^1$ limits in $R$ of $\frac{N_X(F,G,R)}{\pi R^2}$.  But an application of the Wiener Ergodic Theorem \cite[Section 3.3]{SW18} shows that $c_{X(F)}(G)$ is the $L^1$ limit of the deterministic function $\frac{\mathbb{E} \left[ N_X(F,G,R) \right]}{\pi R^2}$.  A simple ``tightness" argument \cite[Appendix B]{SW18} finally gives a \textit{deterministic probability} measure
\begin{equation} \label{e:tree_measure}
\mu_{X} =  \frac{1}{c_{\rm NS}} \sum_{G \in \mathcal{T}} \mathbb{E}\left[ c_{X(F)}(G) \right] \, \delta_G
\end{equation}
on the space $\mathcal{T}$.  Note that given a more general ensemble of Gaussian fields $F$, one can construct an analogous measure $\mu_X$ that is \emph{universal}, that is it depends only $K$.  This universality has a consequence when considering analogous random waves on more general compact surfaces.

It is this measure $\mu_X$ that is called the \textit{monochromatic connectivity measure} in \cite{SW18}.  It is important to note that in the case of $G_0$ being a single vertex or $G_1$ a tree on two vertices, the only possible topologies of planar domains are those with no holes and one hole, respectively.  Thus, considering Theorems \ref{thm:main_0} and \ref{thm:main_1}, one can relabel the atoms of $\mu_X$ as follows: $ \mu_X(G_0)=\mu(0)$ and $ \mu_X(G_1) = \mu(1)$.  Following \cite[Theorems 3.3, 3.4, 4.2]{SW18} and using the Borel-Cantelli Lemma, we find that
\begin{equation} \label{e:density_lb}
\mathbb{E}\left[ c_{X(F)}(G) \right]  \geq \liminf_{R \rightarrow \infty} \, \frac{\mathbb{E} \left[ N_X(F,G,R) \right]}{\pi R^2} .
\end{equation}
Hence, the main results of this article give lower bounds for $\mu(0)$ and $ \mu(1)$.

\section{Use of the Implicit Function Theorem} \label{sec:newt}

The following ``Explicit Function Theorem" is a quantitative version of the usual Implicit Function Theorem.
The statement is inspired by a similar formulation given by Cohn-Kumar-Minton \cite[Theorem~3.1]{CKM}, which also applies to vector-valued functions (that is, to systems of equations instead of a single one). Their proof uses a continuous version of Newton's method, following a similar argument that appears in Neuberger's article \cite[Theorem 2]{N}. In the scalar case, it is possible to give a proof under more flexible hypotheses without using Newton's method.


\begin{lemma} \label{lem:scalar}
Suppose $F: B(x_0,\delta) \rightarrow \R$ is a $C^1$ function defined on a ball in $\R^n$ and $T \in \R^n$ is a non-zero vector.
Assume that for all $x \in B(x_0,\delta)$,
\begin{equation} \label{eq:hypo}
\left| \nabla F(x) \cdot T \right| > \frac{ | T | |F(x_0)| }{\delta}.
\end{equation}
Then $F$ has a zero inside $B(x_0,\delta)$. 
\end{lemma}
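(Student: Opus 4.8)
The plan is to reduce the statement to a one-variable problem by restricting $F$ to the line through $x_0$ in the direction $T$. Concretely, I would set $u = T/|T|$ and define $g(t) = F(x_0 + tu)$ for $t \in (-\delta,\delta)$, noting that $x_0 + tu \in B(x_0,\delta)$ exactly when $|t| < \delta$, so that $g$ is a well-defined $C^1$ function there. By the chain rule $g'(t) = \nabla F(x_0+tu)\cdot u = |T|^{-1}\,\nabla F(x_0+tu)\cdot T$, so hypothesis (\ref{eq:hypo}) translates precisely into
\[
|g'(t)| \;>\; \frac{|F(x_0)|}{\delta} \;=:\; c \qquad \text{for every } t \in (-\delta,\delta).
\]
A zero $t^\ast$ of $g$ corresponds to the zero $x_0 + t^\ast u \in B(x_0,\delta)$ of $F$, so it suffices to produce a zero of $g$.

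Next I would normalize. Since $g'$ is continuous and nowhere zero it has constant sign, and replacing $T$ by $-T$ (which leaves (\ref{eq:hypo}) unchanged) I may assume $g' > 0$, so $g$ is strictly increasing; if $F(x_0) = 0$ there is nothing to prove, and replacing $F$ by $-F$ (also harmless for (\ref{eq:hypo})) I may assume $F(x_0) = g(0) > 0$. The idea is then to travel in the negative $t$-direction to drive $g$ below zero, the key ingredient being the \emph{strict} inequality
\[
\int_{-\delta}^{0} g'(s)\,ds \;>\; \delta c \;=\; F(x_0).
\]
Granting this, I would pick $t_0 \in (-\delta,0)$ with $\int_{t_0}^{0} g'(s)\,ds > F(x_0)$ (possible since $\rho \mapsto \int_{-\rho}^0 g'$ is increasing with limit $\int_{-\delta}^0 g' > F(x_0)$ as $\rho \to \delta^-$), so that $g(t_0) = g(0) - \int_{t_0}^{0} g' < 0 < g(0)$, and conclude by the intermediate value theorem on $[t_0,0]$.

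The step I expect to be the main obstacle is exactly this strict inequality — and it is the only place the strictness in (\ref{eq:hypo}) is used. Pointwise we know only $g'(s) > c$, which yields $\int_{-\rho}^{0} g' > \rho c$ for each $\rho \in (0,\delta)$, but the supremum of these bounds is merely $\delta c$, and one cannot simply put $\rho = \delta$ since $g$ need not extend to the boundary point $-\delta$. To upgrade to a strict inequality I would argue that $g' - c$ is continuous and strictly positive on the \emph{open} interval $(-\delta,0)$, hence bounded below by some $\beta > 0$ on a closed subinterval $I \subset (-\delta,0)$; since $g' - c \ge 0$ everywhere (and the case $\int_{-\delta}^0 g' = +\infty$ is trivial), this gives $\int_{-\delta}^{0}(g'-c)\,ds \ge \beta\,|I| > 0$, i.e. $\int_{-\delta}^0 g' > \delta c$. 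Equivalently, monotonicity of $g$ makes $L := \lim_{\rho\to\delta^-} g(-\rho)$ exist in $[-\infty, g(0))$, and $g(0) - L = \int_{-\delta}^{0} g' > \delta c = g(0)$ forces $L < 0$, so $g$ changes sign on $(-\delta,0)$. Everything else — the reduction to one variable, the two sign normalizations, and the final intermediate value argument — is routine.
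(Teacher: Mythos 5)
Your proof is correct and takes essentially the same route as the paper's first (non-Newton) proof: restrict $F$ to the line through $x_0$ in the direction $T/|T|$, use the constant sign of $\nabla F\cdot T$ together with the fundamental theorem of calculus to force a sign change before reaching the boundary, and conclude by the intermediate value theorem. The only differences are cosmetic: you normalize signs (replacing $T$ by $-T$ and $F$ by $-F$) where the paper argues by cases on the sign of $\nabla F\cdot U$ versus $F(x_0)$, and you spell out the strictness of the integral inequality on the open ball, a small point the paper asserts without comment.
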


We remind the reader that once a single zero has been produced, it then follows from the standard Implicit Function Theorem that $F^{-1}(0)$ is a $C^1$ submanifold of $\R^n$. Moreover, if $F$ is $C^{\infty}$, then $F^{-1}(0)$ is a $C^{\infty}$ submanifold.

Lemma~\ref{lem:scalar} is sharp in the sense that (\ref{eq:hypo}) cannot be relaxed to $|\nabla F(x) \cdot T | > c |T| |F(x_0)| \delta^{-1}$ for any value $c < 1$. For example, the linear function $F(x,y) = x$ does not have a zero inside the open ball $B(x_0,\delta)$ around $x_0 = (\delta,0)$, even though it would satisfy the relaxed hypothesis with $T = (1,0)$.

\begin{proof}[Proof (without Newton's method)]
Let us assume $F(x_0) \neq 0$, for otherwise a zero already exists in $B(x_0, \delta)$.
By the fundamental theorem of calculus,
\begin{equation} \label{e:fund_thm_calc}
\overline{F}(r) : = F\left( x_0 + rU \right) = F(x_0) + \int_0^{r} \nabla F(x_0 + s U) \cdot U ds
\end{equation}
where $U = T/|T|$ is the unit vector in the same direction as $T$ and $r \in [0, \delta)$.
By hypothesis, $\nabla F(x) \cdot U$ is non-zero, and by continuity maintains the same sign  throughout $B(x_0,\delta)$. It follows that
\[
\int_0^{\delta} \nabla F(x_0 + sU) \cdot U ds = \pm \int_0^{\delta} | \nabla F(x_0 + s U) \cdot U | ds.
\]
In particular, by (\ref{eq:hypo}),
\[
\left| \int_0^{\delta} \nabla F(x_0 + sU) \cdot U ds \right| = \int_0^{\delta} | \nabla F(x_0 + sU) \cdot U | ds > |F(x_0)|.
\]

Now suppose the sign of $\nabla F(x) \cdot U$ is opposite that of $F(x_0)$. 
The above estimate, along with (\ref{e:fund_thm_calc}), shows that the 1-dimensional function $\overline{F}(r)$ exhibits a sign change somewhere in $[0, \delta)$.
Therefore $F$ itself must vanish somewhere inside the segment joining $x_0$ to $x_0 + \delta U$.  If the signs of $\nabla F(x) \cdot U$ and $F(x_0)$ are the same, then we instead apply the same argument to $\overline{F}(r) =  F\left( x_0 - r U \right) $. In either case, $F$ has a zero inside the segment joining $x_0$ to $x_0 \pm \delta U$, hence inside $B(x_0,\delta)$ as required.
\end{proof}

For comparison, here is another proof following \cite[Theorem 3.1]{CKM}.  In that reference, $F$ can take values in a normed vector space $W$, not necessarily $\R$, in which case $T: V \rightarrow W$ is a linear operator instead of a single vector. Their hypothesis is that $\| DF(x) \circ T - I \|_{ {\rm op }} < 1 - \| T \|_{ {\rm op} } |F(x_0)|/\delta$, which in the scalar case is somewhat more restrictive than (\ref{eq:hypo}).
It assumes \emph{both} inequalities
\[
\frac{ |T| |F(x_0)| }{\delta} < \nabla F(x) \cdot T < 2 - \frac{ |T| |F(x_0)|}{\delta}
\]
whereas (\ref{eq:hypo}) assumes only the first one (or its negative).
Another difference between the hypotheses is that (\ref{eq:hypo}) is invariant under rescaling $T$.

\begin{proof}[Proof (with Newton's method)]
We produce a curve $x(t)$ starting from $x(0)=x_0$ and such that, for $0 \leq t \leq 1$,
\[
F( x(t) ) = (1-t) F(x_0)
\]
and $x(t) \in B(x_0, \delta)$.  Then $x(1)$ is the required zero of $f$. We obtain $x(t)$ from the differential equation
\begin{equation} \label{eq:newt2}
\frac{dx}{dt} = -T \big( \nabla F(x(t)) \cdot T  \big)^{-1} F(x_0).
\end{equation}
Indeed, if $x(t)$ solves (\ref{eq:newt2}) on $[0,1]$, then the chain rule implies that
\[
\frac{d F(x(t)) }{dt} = \nabla F(x(t)) \cdot \frac{dx}{dt} = - \nabla F(x(t)) \cdot T \big( \nabla F(x(t)) \cdot T)^{-1} F(x_0) = -F(x_0).
\]
Therefore, having the same derivative, $F(x(t))$ and $(1-t)F(x_0)$ differ only by a constant. Since they agree at $t=0$, they must then agree for all $t$.

By hypothesis, $\nabla F(x) \cdot T \neq 0$ for $x \in B(x_0,\delta)$ so that (\ref{eq:newt2}) is well-defined. We claim that it has a solution $x(t)$ defined throughout the interval $0 \leq t \leq 1$ and such that $x(1) \in B(x_0, \delta)$. 
Recall that Peano's existence theorem guarantees the existence of solutions to a differential equation $dx/dt = R(t,x(t))$ provided only that $R$ is continuous. In particular, because $F$ is $C^1$, Peano's theorem applies to (\ref{eq:newt2}). Any solution $x(t)$ satisfies
\[
\left| \frac{dx}{dt} \right| \leq \frac{| T | |F(x_0)| }{ | \nabla F(x(t)) \cdot T | }< \delta
\]
by (\ref{eq:hypo}). By integration, $|x(t) - x_0 | < \delta t$. It follows that $x(t)$ can be extended to $t=1$ while remaining inside $B(x_0,\delta)$.
\end{proof}

Suppose there is a given function $G$ on $B(x_0,\delta)$ such that $G(x_0) = 0$. We would like to use Lemma~\ref{lem:scalar} to produce a zero of any function $F$ approximating $G$ closely enough near $x_0$. In the application to our main theorems, $G$ is $J_0(r)$ and $F$ is $J_0(r) + p(r,\theta)/\xi_0$ with $p$ defined in equation (\ref{e:p_pert}), but this step in the proof goes through more generally.

\begin{proposition} \label{prop:approx}
Suppose $G$ is a $C^1$ function on a ball $B(x_0,\delta)$ in $\R^n$, with $G(x_0) = 0$ and $\nabla G(x_0) \neq 0$.
Assume $F$ is another $C^1$ function on $B(x_0,\delta)$ such that
\begin{equation} \label{eq:hypo2}
\sup_{B(x_0,\delta) } \max( |F - G|, | \nabla F - \nabla G | ) < \frac{\delta}{1+\delta} \inf_{B(x_0, \delta)} \left| \nabla G(x) \cdot \frac{\nabla G(x_0)}{|\nabla G(x_0)|} \right|.
\end{equation}
Then $F$ has a zero in $B(x_0,\delta)$.
\end{proposition}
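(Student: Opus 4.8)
The plan is to deduce Proposition~\ref{prop:approx} directly from Lemma~\ref{lem:scalar}, applied to $F$ with the \emph{constant} vector $T = \nabla G(x_0)$, which is non-zero by hypothesis. Writing $U = T/|T| = \nabla G(x_0)/|\nabla G(x_0)|$ for the associated unit vector, verifying the conclusion amounts to checking the single inequality \eqref{eq:hypo}; dividing that inequality through by $|T|$, it reads $|\nabla F(x)\cdot U| > |F(x_0)|/\delta$ for every $x \in B(x_0,\delta)$, and this is what I would establish.

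First I would abbreviate $m := \inf_{B(x_0,\delta)} \bigl| \nabla G(x)\cdot U \bigr|$, the quantity appearing on the right of \eqref{eq:hypo2}, so that the hypothesis becomes $\sup_{B(x_0,\delta)} \max\bigl(|F-G|,\,|\nabla F - \nabla G|\bigr) < \tfrac{\delta}{1+\delta}\, m$; note in passing that if $m=0$ the hypothesis cannot hold, so there is nothing to prove and we may assume $m>0$. Since $x_0 \in B(x_0,\delta)$ and $G(x_0)=0$, evaluating the hypothesis at $x_0$ gives $|F(x_0)| = |F(x_0) - G(x_0)| < \tfrac{\delta}{1+\delta}\, m$, hence $|F(x_0)|/\delta < \tfrac{m}{1+\delta}$. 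On the other hand, for any $x \in B(x_0,\delta)$ the triangle inequality together with $|U|=1$ yields
\[
|\nabla F(x)\cdot U| \;\geq\; |\nabla G(x)\cdot U| - |\nabla F(x) - \nabla G(x)| \;>\; m - \frac{\delta}{1+\delta}\, m \;=\; \frac{m}{1+\delta}.
\]
Chaining the two bounds gives $|\nabla F(x)\cdot U| > \tfrac{m}{1+\delta} > |F(x_0)|/\delta$, which is exactly \eqref{eq:hypo}; Lemma~\ref{lem:scalar} then produces a zero of $F$ inside $B(x_0,\delta)$, as claimed.

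There is no genuine obstacle here, so the point to be careful about is the bookkeeping of constants: the factor $\tfrac{\delta}{1+\delta}$ in \eqref{eq:hypo2} is precisely calibrated so that the two losses from the triangle inequalities — one of size $\tfrac{\delta}{1+\delta}m$ absorbed into $|F(x_0)|$ and then scaled by $1/\delta$, the other of the same size subtracted from the lower bound for $|\nabla F \cdot U|$ — leave $\tfrac{m}{1+\delta}$ sitting strictly between $|F(x_0)|/\delta$ and $|\nabla F(x)\cdot U|$. I would also take care to record that the strictness of \eqref{eq:hypo2} is exactly what delivers the strict inequality demanded in \eqref{eq:hypo}, and that the whole argument is insensitive to whether the infimum $m$ is attained. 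Finally, by the remark following Lemma~\ref{lem:scalar}, once a single zero is located the usual implicit function theorem upgrades $F^{-1}(0)$ to a $C^1$ (and $C^\infty$, if $F$ is smooth) submanifold near that zero, which is the form in which this proposition is used in the proofs of Theorems~\ref{thm:main_0} and \ref{thm:main_1}.
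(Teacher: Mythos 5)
Your proposal is correct and follows essentially the same route as the paper: apply Lemma~\ref{lem:scalar} with $T=\nabla G(x_0)$, then verify \eqref{eq:hypo} via the triangle inequality and Cauchy--Schwarz together with $|F(x_0)|\le \sup_{B(x_0,\delta)}|F-G|$, with the factor $\tfrac{\delta}{1+\delta}$ accounting for exactly the same constant bookkeeping. The only cosmetic difference is that you split the budget into the two bounds $|F(x_0)|/\delta < m/(1+\delta) < |\nabla F(x)\cdot U|$, whereas the paper keeps a single parameter $\varepsilon=\|F-G\|$ and solves the resulting inequality for $\varepsilon$; these are the same computation.
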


For the result to be non-trivial (that is, to apply to some functions $F \neq G$), $\delta$ must be small enough that $\nabla G(x) \cdot \nabla G(x_0) \neq 0$ for $|x - x_0 | < \delta$.

\begin{proof}
We apply Lemma~\ref{lem:scalar}, choosing $T = \nabla G(x_0)$. We must confirm (\ref{eq:hypo}), which becomes
\[
| \nabla F(x) \cdot \nabla G(x_0) | > | \nabla G(x_0) | | F(x_0) | \delta^{-1}.
\]
We verify this by comparing $F$ to $G$ as follows. For brevity, write
\[
\varepsilon = \| F - G \| = \sup_{B(x_0,\delta) } \max( |F - G|, | \nabla F - \nabla G | ).
\]
By the triangle inequality and Cauchy-Schwarz,
\begin{align} \label{e:dot_prod_lb}
\nonumber| \nabla F(x) \cdot \nabla G(x_0) | &\geq | \nabla G(x) \cdot \nabla G(x_0) | - |(\nabla F(x) - \nabla G(x) ) \cdot \nabla G(x_0) | \\
&\geq \inf_{B(x_0,\delta)} | \nabla G(x) \cdot \nabla G(x_0) | - \varepsilon | \nabla G(x_0) |.
\end{align}
On the other hand, $|F(x_0)| \leq \| F - G \| = \varepsilon$ because $G(x_0) = 0$.  
Imposing that the rightmost side of (\ref{e:dot_prod_lb}) is greater than or equal to $|\nabla G(x_0) | \, \epsilon \, \delta^{-1}$ and using $|F(x_0)| \leq \varepsilon$ leads us to a sufficient condition for (\ref{eq:hypo}):
\[
 \inf_{B(x_0,\delta)} | \nabla G(x) \cdot \nabla G(x_0) | - \varepsilon | \nabla G(x_0) |
 > \varepsilon \delta^{-1} |\nabla G(x_0) |.
\]
Hence (\ref{eq:hypo}) follows from
\[
\varepsilon <  \frac{\delta}{1+\delta} \inf_{B(x_0, \delta)} \left| \nabla G(x) \cdot \frac{\nabla G(x_0)}{|\nabla G(x_0)|} \right| 
\]
which is precisely what we have assumed in (\ref{eq:hypo2}).
\end{proof}

In our application, $G = J_0(r)$ and the gradient is given by
\[
\nabla G(x) = J_0'(r) (\cos{\theta},\sin{\theta})
\]
where $(\cos{\theta},\sin{\theta})$ is a unit vector in the radial direction and $r ,\theta$ are the polar coordinates of $x$.
It is convenient for numerical purposes to note that $J_0'(r) = -J_1(r)$ is another Bessel function.
We take $x_0 = (j_{0,1},0)$, where $j_{0,1} = 2.4048\ldots$ is the first root of $J_0$, noting by symmetry that the same calculations apply to any other $x_0$ on the circle $|x_0| = j_{0,1}$.
Then
\[
\nabla G(x) \cdot  \frac{ \nabla G(x_0) }{| \nabla G(x_0) |} = J_0'(r) \cos{\theta}.
\]
To apply Proposition~\ref{prop:approx}, we must choose $\delta$ small enough that neither $J_0'(r)$ nor $\cos{\theta}$ vanishes in $B(x_0,\delta)$.
To control $J_0'(r)$, we assume that
\begin{equation} \label{eq:j0'-neq0}
\delta < j_{1,1} - j_{0,1} = 1.42688\ldots
\end{equation}
where $j_{1,1} = 3.8317\ldots$ is the first positive root of $J_1$. 
Note that $j_{1,1}-j_{0,1} < j_{0,1}$, so that (\ref{eq:j0'-neq0}) also implies that $-\delta > - j_{0,1}$.
Since $j_{0,1} - \delta < r < j_{0,1} + \delta$ for points in the ball $B(x_0,\delta)$, it follows that $0 < r < j_{1,1}$. By definition, $J_1$ has no other roots between $0$ and $j_{1,1}$, so $J_0'(r) = -J_1(r) \neq 0$ for $0 < r < j_{1,1}$. Thus (\ref{eq:j0'-neq0}) guarantees that $J_0'(r) \neq 0$ throughout $B(x_0,\delta)$. 

By chance and trigonometry, it happens that (\ref{eq:j0'-neq0}) is also enough to guarantee $\cos{\theta} \neq 0$ on $B(x_0,\delta)$. 
Moreover,
\[
\cos{\theta} > \frac{r^2 + j_{0,1}^2 - \delta^2 }{2j_{0,1} r} \geq \sqrt{1 - \frac{\delta^2}{j_{0,1}^2} } 
\]
with the greatest angle achieved as part of a right triangle of sides $\delta, j_{0,1},$ and $\sqrt{j_{0,1}^2-\delta^2}$. 
In particular, $\cos{\theta} \neq 0$ for $\delta < j_{0,1} = 2.4048\ldots$, and this already follows from (\ref{eq:j0'-neq0}).

\section{The probability of a good approximation} \label{sec:prob}

Given a domain $A$ in $\R^2$, we use the following version of the $C^1$ norm in polar coordinates:
\begin{equation} \label{eq:Anorm}
\| F \|_A = \sup_A \max\left( |F| , \left| \frac{\partial F}{\partial r} \right|, \frac{1}{r} \left| \frac{\partial F}{\partial \theta} \right| \right)
\end{equation}
In our application, $A$ will be an annulus or a ball contained within this annulus.
The norm (\ref{eq:Anorm}) controls $|\nabla F|$ via
\begin{equation} \label{eq:two-norms}
\sup_A | \nabla F | \leq \sqrt{2} \| F \|_A.
\end{equation}

\begin{proposition} \label{prop:prob}
Let $F = \xi_0 J_0(r) + p(r,\theta)$ be a random wave as in (\ref{eq:plane_wave_complex}) and let $A$ be any domain in $\R^2$. 
For any $\varepsilon > 0$,
\begin{equation}
\prob\left( \| p \|_{A} < \varepsilon |\xi_0| \right) \geq \frac{2}{\sqrt{2\pi}} \int_{\sqrt{\pi} S/\varepsilon}^{\infty} \left( 1 - \frac{\sqrt{\pi} S}{\varepsilon x} \right) e^{-x^2/2} dx
\end{equation}
where $S = S(A)$ is given by
\begin{equation} \label{eq:s}
S = \sum_{n =1}^{\infty} \left(\sup_A |J_n(r)| + \sup_A |J_n'(r) | + n \sup_A \left| \frac{J_n(r) }{r} \right| \right).
\end{equation}
\end{proposition}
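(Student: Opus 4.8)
\emph{Proof sketch.} The plan is a first--moment argument: I would dominate the random $C^1$ norm $\|p\|_A$ by a single nonnegative random variable whose expectation is a fixed multiple of $S$, apply Markov's inequality to it, and then average over $\xi_0$, using that the perturbation $p$ is independent of $\xi_0$. For the deterministic input, start from the series (\ref{e:p_pert}) and differentiate it term by term; this is legitimate because the super-exponential decay of $J_n(r)$ and $J_n'(r)$ makes the series and its formal $r$- and $\theta$-derivatives locally uniformly convergent on the bounded set $A$, giving $\partial_r p=\sum_{0\neq n\in\Z}\zeta_n J_{|n|}'(r)e^{in\theta}$ and $\partial_\theta p=\sum_{0\neq n\in\Z} in\,\zeta_n J_{|n|}(r)e^{in\theta}$. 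Taking absolute values inside the sums, using $|\zeta_{-n}|=|\zeta_n|$, and then $\sup_A$, one obtains
\[
\sup_A|p|\le 2\sum_{n\ge1}|\zeta_n|\sup_A|J_n|,\qquad \sup_A|\partial_r p|\le 2\sum_{n\ge1}|\zeta_n|\sup_A|J_n'|,\qquad \sup_A\tfrac1r|\partial_\theta p|\le 2\sum_{n\ge1}n|\zeta_n|\sup_A\Bigl|\tfrac{J_n(r)}{r}\Bigr|.
\]
Since $\|p\|_A$ is the supremum of the pointwise maximum of $|p|,|\partial_r p|,\tfrac1r|\partial_\theta p|$, it is bounded by the sum of the three right-hand sides, i.e. $\|p\|_A\le\mathcal S$, where $\mathcal S:=2\sum_{n\ge1}|\zeta_n|a_n$ with $a_n:=\sup_A|J_n|+\sup_A|J_n'|+n\sup_A|J_n(r)/r|$, so that $\sum_{n\ge1}a_n=S$.

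Next I would compute $\E\mathcal S$. Each $\zeta_n$ has independent $N(0,\tfrac12)$ real and imaginary parts, so $|\zeta_n|$ is Rayleigh distributed with $\E|\zeta_n|=\tfrac{\sqrt\pi}{2}$; by monotone convergence $\E\mathcal S=2\sum_{n\ge1}a_n\E|\zeta_n|=\sqrt\pi\,S$. Markov's inequality then gives, for every $x>0$,
\[
\prob(\|p\|_A\ge\varepsilon x)\le\prob(\mathcal S\ge\varepsilon x)\le\frac{\sqrt\pi S}{\varepsilon x},\qquad\text{hence}\qquad \prob(\|p\|_A<\varepsilon x)\ge\max\Bigl(0,\,1-\frac{\sqrt\pi S}{\varepsilon x}\Bigr),
\]
where the lower truncation is justified because the left-hand side is a probability, hence nonnegative.

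Finally I would average over $\xi_0$. Since $p$ depends only on $(\xi_n,\eta_n)_{n\ge1}$, it is independent of $\xi_0\sim N(0,1)$, whose absolute value has density $\tfrac{2}{\sqrt{2\pi}}e^{-x^2/2}$ on $[0,\infty)$. Conditioning on $|\xi_0|$ and inserting the bound of the previous step pointwise in $x$,
\[
\prob(\|p\|_A<\varepsilon|\xi_0|)=\int_0^\infty\prob(\|p\|_A<\varepsilon x)\,\frac{2}{\sqrt{2\pi}}e^{-x^2/2}\,dx\ge\frac{2}{\sqrt{2\pi}}\int_{\sqrt\pi S/\varepsilon}^\infty\Bigl(1-\frac{\sqrt\pi S}{\varepsilon x}\Bigr)e^{-x^2/2}\,dx,
\]
which is precisely the claimed inequality.

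I do not anticipate a serious obstacle: the argument is a first-moment estimate dressed up with the conditioning on $\xi_0$. The only points needing a little care are the term-by-term differentiation of the Bessel series and the interchange of sum and expectation, both covered by absolute and monotone convergence once one notes $S<\infty$ (if $S=\infty$ the asserted inequality is vacuous), and the reminder to truncate the integrand at its sign change, which is automatic since it is a probability. One could sharpen the step $\E\mathcal S=\sqrt\pi S$ into a genuine concentration bound for $\mathcal S$, but Markov already yields the stated estimate, and this is the deliberately simple tool referred to in the introduction.
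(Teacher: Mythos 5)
Your argument is correct and takes essentially the same route as the paper: a first-moment (Markov) bound on the series majorizing the $C^1$ norm, with $\E[|\zeta_n|]=\sqrt{\pi}/2$, followed by conditioning on the independent coefficient $\xi_0$ and discarding the range of $x$ where the bound is trivial. The only cosmetic difference is that you dominate $\|p\|_A$ by one random series and apply Markov once, whereas the paper uses a union bound over the three suprema and applies Markov to each; the resulting estimate is identical.
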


Proposition~\ref{prop:prob} follows from a tail estimate for $\| p \|_{A}$, which does not require the coefficients of the random wave to be Gaussian. 
Naturally, more powerful tools are available in the Gaussian case, but the following simple argument is sufficient for our purposes and easy to make explicit.
\begin{proposition} \label{prop:tail}
Let $p$ be the random function
\[
p(r,\theta) = \sum_{n \neq 0} \zeta_n J_{|n|}(r) e^{i n \theta}
\]
where $\zeta_n$ are complex-valued random variables satisfying $\E[|\zeta_n|] \leq Z$.
Then, for any $\tau > 0$ and any domain $A$, 
\[
\prob\left( \| p \|_{A} < \tau \right) \geq 1 - 2ZS/\tau 
\]
where $S = S(A)$ is the sum (\ref{eq:s}).
\end{proposition}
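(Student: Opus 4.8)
The plan is to bound $\|p\|_A$ in expectation by a deterministic multiple of the sum $S$, and then convert this into a tail bound via Markov's inequality. First I would observe that from the definition (\ref{eq:Anorm}) of the norm,
\[
\|p\|_A \leq \sup_A |p| + \sup_A \left| \frac{\partial p}{\partial r} \right| + \sup_A \frac{1}{r}\left| \frac{\partial p}{\partial \theta} \right|,
\]
so it suffices to control each of the three terms on the right. Differentiating the series $p(r,\theta) = \sum_{n \neq 0} \zeta_n J_{|n|}(r) e^{in\theta}$ term by term gives $\partial p/\partial r = \sum_{n\neq 0} \zeta_n J_{|n|}'(r) e^{in\theta}$ and $\partial p/\partial \theta = \sum_{n\neq 0} in\,\zeta_n J_{|n|}(r) e^{in\theta}$, so in all three cases we are looking at a series of the form $\sum_{n \neq 0} \zeta_n a_n(r) e^{in\theta}$ with $|a_n(r)|$ bounded respectively by $|J_{|n|}(r)|$, $|J_{|n|}'(r)|$, and $|n|\,|J_{|n|}(r)|/r$ (for the last, after pulling the factor $1/r$ inside).

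Next I would apply the triangle inequality pointwise and then take the supremum: for any of these series,
\[
\sup_A \left| \sum_{n \neq 0} \zeta_n a_n(r) e^{in\theta} \right| \leq \sum_{n \neq 0} |\zeta_n| \sup_A |a_n(r)|.
\]
Taking expectations and using $\E[|\zeta_n|] \leq Z$ together with $\zeta_{-n} = \overline{\zeta_n}$ (so $|\zeta_{-n}| = |\zeta_n|$, and each positive index contributes twice) yields
\[
\E\left[ \sup_A |p| \right] \leq 2Z \sum_{n \geq 1} \sup_A |J_n(r)|,
\]
and similarly for the radial and angular derivatives, with $\sup_A |J_n'(r)|$ and $n\sup_A |J_n(r)/r|$ in place of $\sup_A |J_n(r)|$. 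Summing the three contributions gives $\E[\|p\|_A] \leq 2ZS$ with $S$ exactly as in (\ref{eq:s}). Finally, Markov's inequality gives, for any $\tau > 0$,
\[
\prob\left( \|p\|_A \geq \tau \right) \leq \frac{\E[\|p\|_A]}{\tau} \leq \frac{2ZS}{\tau},
\]
so $\prob(\|p\|_A < \tau) \geq 1 - 2ZS/\tau$, which is the claim.

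The main point requiring care — rather than a genuine obstacle — is justifying the interchange of supremum, expectation, and infinite summation: one should note that the Bessel sums defining $S$ converge (since $J_n(r)$ decays superexponentially in $n$ for $r$ in a fixed bounded set $A$, uniformly), so the series for $p$ and its derivatives converge uniformly and absolutely on $A$ almost surely, legitimizing the term-by-term differentiation and the pointwise triangle inequality; and the monotone convergence theorem handles the swap of $\E$ with the sum of nonnegative terms. If the right-hand side $2ZS/\tau$ exceeds $1$ the bound is vacuous but still true, so no case distinction is needed.
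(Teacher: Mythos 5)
Your proof is correct and takes essentially the same route as the paper: term-by-term triangle inequality on the series for $p$ and its two derivatives, the hypothesis $\E[|\zeta_n|]\leq Z$ to merge the $\pm n$ contributions into $2Z$ times the sum (\ref{eq:s}), and Markov's inequality. The only cosmetic difference is that you bound the max in $\|p\|_A$ by the sum of the three suprema and apply Markov once to $\E[\|p\|_A]\leq 2ZS$, whereas the paper uses a union bound over the three events $\{\cdot\geq\tau\}$ and applies Markov to each; both give exactly $1-2ZS/\tau$.
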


\begin{proof}[Proof of Proposition~\ref{prop:tail}]
The $C^1$ norm of $p$, as in (\ref{eq:Anorm}), controls three quantities. A union bound over these three cases leads to
\[
\prob\left( \| p \|_{A } < \tau \right) \geq 1 - \prob\left( \sup_A |p| \geq \tau \right) - \prob\left( \sup_A \left| \frac{\partial p }{\partial r} \right| \geq \tau \right) - \prob\left( \sup_A \left| \frac{1}{r} \frac{\partial p }{\partial \theta} \right| \geq \tau \right) 
\]
for any value of the tolerance $\tau$.
We will bound each of the latter terms using Markov's inequality.
Suppose that $\sup_A |p| \geq \tau$. Then for some values of $r$ and $\theta$,
\[
\tau \leq |p(r,\theta) | \leq \sum_{n \neq 0} |\zeta_n| |J_{|n|}(r) | \leq \sum_{n \neq 0} |\zeta_n| \sup_A |J_{|n|}(r) |.
\]
It follows that
\[
\prob\left( \sup_A |p| \geq \tau \right) \leq \prob\left( \sum_{n\neq 0} |\zeta_n| \sup_A |J_{|n|}(r)| \geq \tau \right).
\]
By Markov's inequality,
\[
\prob\left( \sum_{n\neq 0} |\zeta_n| \sup_A |J_{|n|}(r)| \geq \tau \right) \leq \frac{1}{\tau} \E\left[ \sum_{n \neq 0} |\zeta_n| \sup_{A} |J_{|n|}(r) | \right] \leq \frac{Z}{\tau} \sum_{n \neq 0} \sup_A J_{|n| }(r).
\]
Similar calculations apply to the derivatives of $p$, using the series
\[
\frac{\partial p }{\partial r} = \sum_{n \neq 0} \zeta_n J_{|n|}'(r) e^{in\theta}, \quad \frac{1}{r} \frac{\partial p}{\partial \theta} = \sum_{n \neq 0} \zeta_n \frac{J_{|n|}(r)}{r} i n e^{in\theta}.
\]
Combining the three terms, and merging the equal contributions from $n$ and $-n$, we obtain
\[
\prob\left( \| p \|_{A} < \tau \right) \geq 1 - \frac{2Z}{\tau} \sum_{n =1}^{\infty} \left( \sup_A |J_n(r)| + \sup_A |J_n'(r) | + n \sup_A \left| \frac{J_n(r) }{r} \right| \right)
\]
as required.
\end{proof}

\begin{proof}[Proof of Proposition~\ref{prop:prob}]
Since $\xi_0$ is a standard Gaussian independent of $p$,
\[
\prob\left( \| p \|_{A} < \varepsilon |\xi_0| \right) = \frac{1}{\sqrt{2\pi} }\int_{-\infty}^{\infty} \prob\left( \| p \|_{A} < \varepsilon |x| \right) e^{-x^2/2} dx.
\]
For small values of $\tau$, the lower bound given by Proposition~\ref{prop:tail} might be no better than the trivial bound that probabilities are nonnegative. 
We keep only the large values of $\tau = \varepsilon |x|$ for which Proposition~\ref{prop:tail} implies a nontrivial lower bound:
\[
\prob\left( \left\| \frac{p(r,\theta)}{\xi_0} \right\|_{A} < \varepsilon \right) \geq 2 \int_{2ZS/\varepsilon}^{\infty} \left( 1 - \frac{2Z S}{\varepsilon x} \right) e^{-x^2/2} dx /\sqrt{2\pi}.
\]
The factor of 2 arises as the integrand is even.

When the coefficients $\zeta_n$ are complex Gaussians, the value of $\E[|\zeta|]$ is easily determined in closed form, and well-known.
Note that $|\zeta| = \frac{\sqrt{2}}{2} \sqrt{\xi^2 + \eta^2}$ where $\xi$ and $\eta$ are independent standard Gaussians. It follows that
\[
\E[|\zeta|] = \frac{\sqrt{2}}{2} \int_{-\infty}^{\infty} \int_{-\infty}^{\infty} \sqrt{x^2+y^2} e^{-x^2/2} e^{-y^2/2} dx dy / (2\pi).
\]
We can therefore take $Z = \sqrt{\pi}/2$ in the Gaussian case, which completes the proof.

\end{proof}

\section{Proof of Theorem~\ref{thm:main_0}} \label{sec:proof0}

Let $D = B(0,r_0)$ be the disk of radius $r_0$ around 0. We will produce a simply connected nodal domain inside $D$, with non-zero probability.
Recall from (\ref{eq:expected-per-volume}) that using a hexagonal array of several disks packed into a large ball gives
\[
\lim_{{\rm diam}(B) \rightarrow \infty} \frac{\E[N_T(F,B)]}{\vol{B}} \geq \frac{ \prob(N_T(F,D) \geq 1) }{\vol{D} } \frac{\pi}{\sqrt{12}}
\]
where the type $T$ here designates the simply connected nodal domains. 
If $D$ contains any nodal domain, then it must contain a simply connected one as a subdomain of the given domain.
It therefore suffices to estimate the probability that $D$ contains a nodal domain at all.

Let $\delta > 0$ and let $A$ be a domain containing the annulus defined by $|r - j_{0,1} | < \delta$, where $j_{0,1} = 2.4048\ldots$ is the first root of $J_0$.
We assume that $\delta < j_{1,1} - j_{0,1} = 1.42688\ldots$ so that Proposition~\ref{prop:approx} applies to $G = J_0(r)$. Let $x_0$ be any point on the circle $|x|=j_{0,1}$.
Let $F(r,\theta) = J_0(r) + p(r,\theta)/\xi_0$ for $p(r, \theta)$ as in (\ref{e:p_pert}).  Whenever $F$ satisfies
\[
\sup_{B(x_0,\delta)} \max\left( |F- J_0|, |\nabla F - \nabla J_0 | \right) <   \frac{\delta}{1+\delta}  \inf_{B(x_0,\delta)} \left| J_0'(r) \cos{\theta} \right|,
\]
Proposition~\ref{prop:approx} then guarantees that $F$ has a zero inside $B(x_0,\delta)$.  On one side, by (\ref{eq:two-norms}),
\[
\sup_{B(x_0,\delta)} \max( |F - J_0|, |\nabla F - \nabla J_0| ) \leq \sqrt{2} \frac { \| p \|_A  }{ |\xi_0| }
\]
where $A$ is any domain containing $B(x_0,\delta)$. 
On the other,
\[
\inf_{B(x_0,\delta)} | J_0'(r) \cos{\theta} | \geq \left(1- \frac{\delta^2}{j_{0,1}^2} \right)^{1/2} \inf_{|r - j_{0,1}| < \delta} |J_0'(r) |.
\]
Thus $F$ has a zero in $B(x_0,\delta)$ provided that
\begin{equation} \label{eq:howl}
\frac{ \| p \|_{A} }{|\xi_0|} < \frac{1}{\sqrt{2} }  \frac{\delta}{1+\delta}  \left(1 - \frac{\delta^2}{j_{0,1}^2} \right)^{1/2}  \inf_{|r - j_{0,1}| < \delta} |J_0'(r) |.
\end{equation}
This occurs with a non-zero probability quantified by Proposition~\ref{prop:prob}.

The argument so far shows that, whenever $|\xi_0|$ is large enough that (\ref{eq:howl}) holds, $F$ has a nodal line beginning within $\delta$ of the circle $r = j_{0,1}$. 
The next step shows that any such nodal line remains within a narrow annulus around this circle, and in particular does not leave $D$.

\begin{proposition} \label{prop:stay}
Suppose $\delta > 0$ satisfies
\begin{align}
\delta &< j_{1,1} - j_{0,1} = 1.42688\ldots \label{eq:delta-j11-j01}
\end{align}
and $\varepsilon > 0$ satisfies both of the inequalities
\begin{align}
\varepsilon &< |J_0(j_{1,1})| = 0.402759\ldots \label{eq:eps-j11}  \\
\varepsilon &\leq \frac{1}{\sqrt{2} }  \frac{\delta}{1+\delta}  \left( 1 - \frac{\delta^2}{j_{0,1}^2} \right)^{1/2} \inf_{|r - j_{0,1}| < \delta } |J_0'(r) | \label{eq:eps-delt}
\end{align}
Let the interval $[a(\varepsilon), b(\varepsilon)]$ be the connected component of $\{ |J_0(r) | \leq \varepsilon \}$ containing the first root $j_{0,1}$.  Let $A$ be a domain containing both of the annuli $a(\varepsilon) < r < b(\varepsilon)$ and $j_{0,1}-\delta < r < j_{0,1} + \delta$.
Then, whenever $\| p \|_A / |\xi_0| < \varepsilon$, the function $F = J_0(r) + p(r,\theta)/\xi_0$ has a nodal line contained within the annulus $a(\varepsilon) < r < b(\varepsilon)$.
\end{proposition}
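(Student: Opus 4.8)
The plan is to combine two facts: first, that $F$ has at least one nodal line starting in $B(x_0,\delta)$ (which follows from Proposition~\ref{prop:approx} once the displayed inequality~(\ref{eq:howl}) holds, and~(\ref{eq:eps-delt}) is exactly the statement that $\varepsilon$ is small enough to force~(\ref{eq:howl}) whenever $\|p\|_A/|\xi_0| < \varepsilon$); and second, a barrier argument showing that on the circles $r = a(\varepsilon)$ and $r = b(\varepsilon)$ bounding the component $[a(\varepsilon),b(\varepsilon)]$ of $\{|J_0| \le \varepsilon\}$, the function $F$ cannot vanish — indeed $F$ has a definite sign on each of these two circles, opposite to each other. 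Granting this, any nodal line of $F$ that enters the open annulus $a(\varepsilon) < r < b(\varepsilon)$ cannot cross either bounding circle, so it is trapped inside; since the nodal line produced by Proposition~\ref{prop:approx} starts at a point with $|x| = j_{0,1}$, which lies in the interior of this annulus (as $a(\varepsilon) < j_{0,1} < b(\varepsilon)$), that nodal line is contained in the annulus, proving the claim.

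So the work is the sign argument on the two bounding circles. On $r = a(\varepsilon)$ we have $|J_0(a(\varepsilon))| = \varepsilon$ by definition of the endpoint of the component, and similarly $|J_0(b(\varepsilon))| = \varepsilon$; moreover $J_0$ takes opposite signs at $a(\varepsilon)$ and $b(\varepsilon)$ since $j_{0,1}$ is a simple zero of $J_0$ lying strictly between them and $J_0$ has no other zero in $[a(\varepsilon),b(\varepsilon)]$ (here one uses that this interval does not reach $j_{0,2}$, which is where hypotheses~(\ref{eq:delta-j11-j01}) and~(\ref{eq:eps-j11}) come in: condition~(\ref{eq:eps-j11}), $\varepsilon < |J_0(j_{1,1})|$, guarantees that $\{|J_0| \le \varepsilon\}$ has not yet merged across the local extremum of $J_0$ at $j_{1,1}$, so the component containing $j_{0,1}$ stays well short of $j_{0,2}$, in particular $b(\varepsilon) < j_{1,1} < j_{0,2}$). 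On the other hand, on the circle $r = a(\varepsilon)$, if $a(\varepsilon)$ lies within the annulus $A$ on which we are approximating — which it does, since $A$ is assumed to contain $a(\varepsilon) < r < b(\varepsilon)$, and one should take $A$ to actually contain a neighbourhood including these bounding circles, or argue by continuity at the endpoints — we have
\[
|F(a(\varepsilon),\theta)| \ge |J_0(a(\varepsilon))| - \frac{\|p\|_A}{|\xi_0|} > \varepsilon - \varepsilon = 0,
\]
and the same at $r = b(\varepsilon)$, so $F$ is sign-definite on each circle, with the sign inherited from $J_0$; hence the two signs are opposite. This is the heart of the matter.

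The main obstacle I anticipate is a boundary/openness bookkeeping issue: the estimate $|F| \ge |J_0| - \|p\|_A/|\xi_0|$ gives strict positivity of $|F|$ only where $|J_0| > \varepsilon$ strictly, i.e. in the open annulus $r < a(\varepsilon)$ or $r > b(\varepsilon)$ (outside the component), not exactly on the bounding circles where $|J_0| = \varepsilon$ and the bound only yields $|F| \ge 0$. The clean fix is to note that $\|p\|_A/|\xi_0| < \varepsilon$ is a \emph{strict} inequality, so in fact $|F| \ge |J_0| - \|p\|_A/|\xi_0| > |J_0| - \varepsilon \ge 0$ on $a(\varepsilon) < r < b(\varepsilon)$ only tells us $F \ne 0$ in the \emph{open} exterior; to handle the closed bounding circles one instead observes that $J_0(a(\varepsilon)) = \pm\varepsilon$ with $|p/\xi_0| < \varepsilon$ forces $F(a(\varepsilon),\theta)$ to have the same sign as $J_0(a(\varepsilon))$, which is already strict there too. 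Thus $F$ never vanishes on $r = a(\varepsilon)$ nor on $r = b(\varepsilon)$, and a connected nodal line that meets the open annulus cannot escape past either circle. Packaging this into the statement ``the nodal line is contained within $a(\varepsilon) < r < b(\varepsilon)$'' then requires only that the starting point $j_{0,1}$ of the Proposition~\ref{prop:approx} nodal line is interior, which is immediate since $|J_0(j_{0,1})| = 0 < \varepsilon$.
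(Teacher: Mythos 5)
Your overall strategy coincides with the paper's: use Proposition~\ref{prop:approx} (via (\ref{eq:eps-delt})) to produce a zero of $F$ near the circle $r=j_{0,1}$, then trap the nodal line through it by noting that at any zero of $F$ in $A$ one has $|J_0(|x|)| \le \|p\|_A/|\xi_0| < \varepsilon$, so the line can never reach the set where $|J_0|\geq\varepsilon$, in particular the circles $r=a(\varepsilon)$ and $r=b(\varepsilon)$. Your handling of the strict inequality on the bounding circles is fine, and your reading of the role of (\ref{eq:eps-j11}) is correct.

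There is, however, a genuine gap at the step where you assert that the nodal line ``starts at a point with $|x|=j_{0,1}$.'' Proposition~\ref{prop:approx} does not produce a zero on the circle $r=j_{0,1}$; it only produces a zero somewhere in the ball $B(x_0,\delta)$, i.e.\ at some radius in $(j_{0,1}-\delta,\,j_{0,1}+\delta)$, and $F$ need not vanish anywhere at radius exactly $j_{0,1}$. Your trapping argument only applies once you know this zero lies in the open annulus $a(\varepsilon)<r<b(\varepsilon)$, and as written you have not shown that the zero (hence the nodal line) meets that annulus rather than some other component of $\{|J_0(r)|<\varepsilon\}$. The missing step is short but uses (\ref{eq:delta-j11-j01}) essentially: since the zero lies in $B(x_0,\delta)\subset A$, the barrier estimate gives $|J_0(|x|)|<\varepsilon$ at the zero, so its radius lies in some component of $\{|J_0(r)|<\varepsilon\}$; every component other than $(a(\varepsilon),b(\varepsilon))$ consists of radii beyond the first critical point $j_{1,1}$ (because $\varepsilon<|J_0(j_{1,1})|$ forces $b(\varepsilon)<j_{1,1}$ and the next component to begin at some $c(\varepsilon)>j_{1,1}$), while the zero's radius is less than $j_{0,1}+\delta<j_{1,1}$ by (\ref{eq:delta-j11-j01}); hence the radius lies in $(a(\varepsilon),b(\varepsilon))$. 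This is exactly how the paper argues, and with this correction your proof goes through.
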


We will take $A$ to be the annulus $j_{0,1}-\delta < |x| < j_{0,1}+\delta$.

\begin{proof}[Proof of Proposition~\ref{prop:stay}]
Let $x_0$ be any point on the circle $|x|=j_{0,1}$. 
By (\ref{eq:delta-j11-j01}) and (\ref{eq:eps-delt}), Proposition~\ref{prop:approx} applies to $J_0$ as in Section~\ref{sec:newt}. The ball $B(x_0,\delta)$ is then guaranteed to contain a zero of $F$.
Let $N$ be the nodal line containing one such zero. By the implicit function theorem, $N$ is a smooth curve without boundary.
We claim that $N$ is contained in $A$.

If $F$ vanishes at a point $x \in A$, then
\begin{equation} \label{eq:slab}
|J_0(|x|)| = \frac{ |-p(|x|,\theta)| }{|\xi_0|} \leq \frac{ \| p \|_{A} }{|\xi_0|} < \varepsilon.
\end{equation}
The inequality $|J_0(r)| < \varepsilon$ defines a disjoint union of annuli around the first several zeros of $J_0$, together with one unbounded region since $J_0(r) \rightarrow 0$ as $r \rightarrow \infty$. 
For $\varepsilon \geq 1$, all radii $r > 0$ satisfy $|J_0(r) | < \varepsilon$. For $\varepsilon$ close to 1, the set $\{ |J_0(r)| < \varepsilon \}$ continues to have only one connected component, consisting of all sufficiently large values of $r$. This component splits when $\varepsilon$ crosses a critical value of $J_0$. The first critical point occurs at $j_{1,1} = 3.831705\ldots$, with value $J_0(j_{1,1}) = -0.402759\ldots$, and we assume $\varepsilon < |J_0(j_{1,1})|$ in (\ref{eq:eps-j11}) so that 
$\{| J_0(r)| < \varepsilon \}$ has at least two connected components.
Say the first component of $\{ |J_0(r) | < \varepsilon \}$ is given by $a(\varepsilon) < r < b(\varepsilon)$, and $r > c(\varepsilon)$ for all other solutions.


Notice that $b(\varepsilon) < j_{1,1} < c(\varepsilon)$ since $b(\varepsilon)$ and $c(\varepsilon)$ are on opposite sides of the first critical point $j_{1,1}$. 
Since $j_{0,1} - \delta < |x| < j_{0,1}+\delta$ for all $x \in B(x_0, \delta)$, and $\delta < j_{1,1} - j_{0,1}$ by hypothesis (\ref{eq:delta-j11-j01}), it follows that $|x| < j_{1,1} < c(\varepsilon)$.
This shows that $B(x_0,\delta)$ does not intersect any component of $\{ |J_0(r) | < \varepsilon \}$ except the first annulus $a(\varepsilon) < r < b(\varepsilon)$. 
In particular, $N \cap B(x_0,\delta)$ is contained in $a(\varepsilon) < r < b(\varepsilon)$. 
The nodal line $N$ must remain in the annulus $a(\varepsilon) < r < b(\varepsilon)$ because leaving would make $|J_0(x)| \geq \varepsilon $ for some $x \in N \cap A$, contrary to (\ref{eq:slab}).
As claimed, $N$ is contained in $a(\varepsilon) < r < b(\varepsilon)$ and hence in $A$.

\end{proof}

To complete the proof of Theorem~\ref{thm:main_0}, we take $\delta$ and $\varepsilon$ small enough to satisfy the hypotheses of Proposition~\ref{prop:stay}.  Take $r_0 =  j_{0,1}+\delta$ so that there is a nodal domain contained in the disk $D$ of radius $r_0$.
Then $N_T(F,D)$ is non-zero whenever $\| p \|_A < \varepsilon |\xi_0|$, and
\[
\lim_{{\rm diam}(B) \rightarrow \infty} \frac{\E[N_T(F,B)]}{\vol{B}} \geq \frac{ \prob(N_T(F,D) \geq 1) }{\vol{D} } \frac{\pi}{\sqrt{12}} \geq \frac{1}{\sqrt{12} } \frac{1}{(j_{0,1}+\delta)^2} \prob( \| p \|_A < \varepsilon |\xi_0| ).
\]
The lower bound from Proposition~\ref{prop:prob} then implies
\[
\lim_{{\rm diam}(B) \rightarrow \infty} \frac{\E[N_T(F,B)]}{\vol{B}} \geq \frac{2}{(j_{0,1}+\delta)^2 \sqrt{24 \pi}}  \int_{\sqrt{\pi} S/\varepsilon}^{\infty} \left( 1 - \frac{\sqrt{\pi} S}{\varepsilon x} \right) e^{-x^2/2} dx 
\]
Finally, we divide by $c_{{\rm NS}}$ for normalization:
\[
\mu(0) = \frac{1}{c_{{\rm NS}} } \lim_{{\rm diam}(B) \rightarrow \infty} \frac{\E[N_T(F,B)]}{\vol{B}} \geq \frac{1}{\sqrt{6\pi} } \frac{1}{(j_{0,1}+\delta)^2} \int_{\sqrt{\pi} S/\varepsilon}^{\infty} \left( 1 - \frac{\sqrt{\pi} S}{\varepsilon x} \right) e^{-x^2/2} dx
\]
as claimed.

Numerically, one can take $\delta = 1/2$. 
Then $\varepsilon$ must satisfy
\begin{equation} \label{eq:eps-max}
\varepsilon \leq \frac{1}{\sqrt{2}} \frac{\delta}{1+\delta} \left( 1 - \frac{\delta^2}{j_{0,1}^2} \right)^{1/2} \inf_{|r-j_{0,1}|\leq \delta} |J_0'(r) | = 0.086161\ldots
\end{equation}
the minimum of $|J_0'(r)|$ over $|r - j_{0,1}| \leq \delta$ being achieved by $|J_0'(j_{0,1}+\delta)| = 0.3737\ldots$
The other constraint $\varepsilon < |J_0(j_{1,1})|=0.402759\ldots$ is then satisfied as well.
We choose the largest value of $\varepsilon$ allowed by (\ref{eq:eps-max}).
The resulting endpoints $a_1(\varepsilon), b_1(\varepsilon)$ are then
\begin{align*}
a_1(\varepsilon) &= 2.243784\ldots > j_{0,1}-\delta \\
b_1(\varepsilon) &= 2.577540\ldots < j_{0,1}+\delta
\end{align*}
The domain $A$ is then the annulus $j_{0,1} - \delta < r < j_{0,1} + \delta$. 

Finally, to estimate the probabilities using Proposition~\ref{prop:prob}, we must compute the sum
\[
S = \sum_{n =1}^{\infty} \left(\sup_A |J_n(r)| + \sup_A |J_n'(r) | + n \sup_A \left| \frac{J_n(r) }{r} \right| \right).
\]
Most of the functions $|J_n(r)|$, $|J_n'(r)|$, and $|J_n(r)/r|$ achieve their suprema at the endpoint $r = j_{0,1}+\delta$, except for a handful of special cases when $n=1,2,3$.
First, $|J_1(r)|$ and $|J_1(r)/r|$ achieve their suprema at the other endpoint $j_{0,1}-\delta$.
Second, $|J_2(r)/r|$ achieves its supremum at a critical point inside the interval, and $|J_2'(r)|$ is maximized at $j_{0,1}-\delta$. 
Finally, $|J_3'(r)|$ is maximized at a critical point inside the interval.
The exceptional values, not occurring at either endpoint $j_{0,1} \pm \delta$, are $|J_2(u)/u| = 0.179962\ldots$ and $|J_3'(v)| = 0.187591\ldots$ where $u=2.299910\ldots$ and $v = 2.637911\ldots$ are the respective maxima of $J_2(r)/r$ and $J_3'(r)$.


Summing the separate contributions to $S$ from $n=1,2,3$ and $n\geq 4$ gives
\begin{align*}
 |J_1(j_{0,1}-\delta)| + |J_1'(j_{0,1}+\delta)| + \left| \frac{ J_1(j_{0,1}-\delta)}{j_{0,1}-\delta} \right| &= 1.240843\ldots&\\
  |J_2(j_{0,1}+\delta)| + |J_2'(j_{0,1}-\delta) | + 2 \left| \frac{J_2(u) }{u} \right|&=1.076795\ldots \\
  \left(1 + \frac{3}{j_{0,1}+\delta} \right) |J_3(j_{0,1}+\delta) + |J_3'(v)| &=0.781099\ldots \\
\sum_{n=4}^{\infty} \left( \left( 1 + \frac{n}{j_{0,1}+\delta} \right) |J_n(j_{0,1}+\delta)| + |J_n'(j_{0,1}+\delta)| \right) &=   0.630586\ldots&\\
\end{align*}
and therefore
\begin{equation} \label{eq:s-delta-half}
S = 3.729324\ldots
\end{equation}


With $\varepsilon = 0.086161$ and $S=3.729324$, the bound from Proposition~\ref{prop:prob} is
\[
\prob( \| p \|_A < \varepsilon |\xi_0| ) \geq 2 \int_{\sqrt{\pi}S/\varepsilon}^{\infty} \left(1 - \frac{\sqrt{\pi}S}{\varepsilon x} \right) e^{-x^2/2} dx / \sqrt{2\pi} > 10^{-1280}.
\] To obtain the bound stated in Corollary~\ref{c:numerical_bounds}, we note from (\ref{e:cNS_ub}) that $c_{{\rm NS}} < 10^{-1}$ and therefore
\[
\frac{1}{c_{{\rm NS}} } \frac{1}{(j_{0,1}+\delta)^2 \sqrt{12} } \prob(\| p \|_A < \varepsilon |\xi_0|) \geq 10 \times 0.0342 \times 10^{-1280} > 10^{-1281}.
\]
Further comments on the numerical calculations are given in the appendix.


\section{Proof of Theorem~\ref{thm:main_1} } \label{sec:proof1}

To produce a domain inside another domain, we approximate $J_0(r)$ as before but in a larger region containing its first two roots $j_{0,1} = 2.404825\ldots $ and $j_{0,2} = 5.520078\ldots$ instead of only $j_{0,1}$.
For $\varepsilon > 0$, let $a_1(\varepsilon) < r < b_1(\varepsilon)$ and $ a_2(\varepsilon) < r < b_2(\varepsilon)$ be the first and second connected components of $\{ |J_0(r) | < \varepsilon \}$.
We assume that $\varepsilon$ is less than the second critical value of $J_0$, or else $\{ |J_0(r) | < \varepsilon \}$ would have fewer connected components and $b_2(\varepsilon)$ would effectively be $\infty$. 
Numerically, this requires that
\[
\varepsilon < |J_0(j_{1,2})|=0.300115\ldots
\]
where $j_{1,2}=7.015586\ldots$ is the second positive root of $J_1 = -J_0'$ and hence the second critical point of $J_0$. 
We also write $a_3(\varepsilon)$ for the start of the third component of $\{ |J_0(r)| < \varepsilon \}$, whereas $b_3(\varepsilon)$ might be infinite depending on how $\varepsilon$ compares to the third critical value.

We will argue as before that there are nodal lines $N_1$ and $N_2$ contained in the respective annuli $a_1(\varepsilon) < r < b_1(\varepsilon)$ and $a_2(\varepsilon) < r < b_2(\varepsilon)$, whenever $\varepsilon$ is small enough and $\| p \|_A / |\xi_0| < \varepsilon$ for a suitable domain $A$. Unlike the previous case, a further step is needed here to guarantee that $N_2$ surrounds $N_1$ and yields a nodal domain of connectivity 1.
One way to do so, without needing stronger restrictions on the perturbation $p$, uses the fact that nodal domains have a non-trivial minimal volume. 

\begin{proposition} \label{prop:minvol}
If $(\Delta + 1)F = 0$, then any nodal domain of $F$ has volume at least $\pi j_{0,1}^2 = 18.168414\ldots$ where $j_{0,1}$ is the first root of the Bessel function $J_0$.
\end{proposition}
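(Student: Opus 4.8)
\emph{Proof proposal.} The plan is to recognise the restriction of $F$ to a nodal domain as a Dirichlet eigenfunction of eigenvalue $1$ and then apply the Faber--Krahn inequality in the plane. Let $\Omega$ be a nodal domain of $F$, so that $F$ has one sign throughout $\Omega$ and vanishes on $\partial\Omega$. If $\Omega$ is unbounded then $\vol(\Omega) = \infty$ and the bound is trivial, so I may assume $\Omega$ is bounded. Write $u = F|_{\Omega}$; then $-\Delta u = u$ in $\Omega$, and integration by parts --- with no boundary contribution, since $u$ vanishes on $\partial\Omega$ --- gives $\int_{\Omega} |\nabla u|^2 = - \int_{\Omega} u\, \Delta u = \int_{\Omega} u^2$. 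Hence the Rayleigh quotient of $u$ equals $1$, so the variational characterisation of the first Dirichlet eigenvalue $\lambda_1(\Omega)$ yields $\lambda_1(\Omega) \leq 1$. (In fact equality holds, since an eigenfunction of constant sign must be the ground state, but only the inequality is used below.)

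Next I would invoke the Faber--Krahn inequality: among all bounded open subsets of $\R^2$ of a given area, the disk minimises the first Dirichlet eigenvalue of $-\Delta$. Let $R > 0$ be defined by $\pi R^2 = \vol(\Omega)$, so that the disk of radius $R$ has the same area as $\Omega$. Separating variables in polar coordinates shows that the first Dirichlet eigenvalue of the disk of radius $R$ in $\R^2$ is $(j_{0,1}/R)^2$, with eigenfunction $J_0(j_{0,1}\, r/R)$. Faber--Krahn therefore gives
\[
\left( \frac{j_{0,1}}{R} \right)^2 \leq \lambda_1(\Omega) \leq 1,
\]
whence $R \geq j_{0,1}$ and $\vol(\Omega) = \pi R^2 \geq \pi j_{0,1}^2 = 18.168414\ldots$, as claimed.

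The one delicate point is the justification that $u = F|_{\Omega}$ is an admissible test function, i.e.\ that $u \in H^1_0(\Omega)$ and that the boundary term in the integration by parts vanishes. For the random waves in our theorems this is automatic: $0$ is almost surely a regular value of $F$, so $\partial\Omega$ is a smooth simple closed curve and $u$ is smooth up to $\partial\Omega$, vanishing there. For an arbitrary $F$ solving $(\Delta+1)F = 0$ one uses that such $F$ is real-analytic, so $\{F = 0\}$ has Lebesgue measure zero, together with the standard fact that a continuous, locally $H^1$ function which is of one sign on an open set and vanishes on its boundary has its truncation to that set lying in $H^1_0$. This is the step where the regularity of the nodal set enters, and the one I would phrase most carefully; the remainder is a direct application of Faber--Krahn.
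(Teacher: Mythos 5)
Your proposal is correct and follows the same route as the paper: both deduce $\lambda_1(\Omega)\leq 1$ from $F$ vanishing on $\partial\Omega$ and then apply Faber--Krahn with the explicit eigenvalue $(j_{0,1}/R)^2$ of the equal-area disk. Your extra care about the Rayleigh quotient and the $H^1_0$ admissibility of $F|_{\Omega}$ is a welcome elaboration but not a different argument.
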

\begin{proof}[Proof of Proposition~\ref{prop:minvol}]
This is a standard consequence of the Faber-Krahn inequality, which asserts that the ball has minimal $\lambda_1(U)$ among all domains $U$ of equal volume, where $\lambda_1(U)$ denotes the lowest eigenvalue of $\Delta$ with Dirichlet boundary conditions on $\partial U$. 
Let $U$ be a nodal domain of $F$. We have $\lambda_1(U) \leq 1$ because $(\Delta + 1)F = 0$ and, by definition, $F$ vanishes on $\partial U$.
The ball of equal volume has radius $\sqrt{\vol(U)/\pi}$ and first eigenfunction $J_0(r j_{0,1} \sqrt{\pi / \vol(U) } )$, where the scaling by $j_{0,1} \sqrt{\pi / \vol(U) }$ guarantees the boundary conditions.
The resulting eigenvalue is then $j_{0,1}^2 \pi / \vol(U)$. 
By Faber-Krahn, $1 \geq \lambda_1 \geq j_{0,1}^2 \pi / \vol(U)$, as required.
\end{proof}

In particular, if $\varepsilon$ is small enough, then the annuli $a_k(\varepsilon) < r < b_k(\varepsilon)$ for $k=1,2$ have volume too small to contain a complete nodal domain. This will force $N_1$ and $N_2$ to bound a domain of connectivity exactly 1.
The correct connectivity could also be guaranteed without the minimal volume property, by taking $F$ even closer to $J_0$ in the $C^1$ topology if necessary.

\begin{proposition} \label{prop:stay1}
Suppose $\delta > 0$ satisfies
\begin{align}
\delta &< j_{1,1} - j_{0,1} = 1.42688\ldots \label{eq:delt}
\end{align}
and $\varepsilon > 0$ satisfies
\begin{align}
\varepsilon &< |J_0(j_{1,2})| = 0.300115\ldots  \label{eq:eps-crit}\\
\varepsilon &\leq \frac{1}{\sqrt{2} } \frac{\delta}{1+\delta} \left( 1 - \frac{\delta^2}{j_{0,1}^2} \right)^{1/2} \inf_{|r - j_{0,1}| < \delta } |J_0'(r) | \label{eq:eps-b1} \\
\varepsilon &\leq \frac{1}{\sqrt{2} } \frac{\delta}{1+\delta}  \left( 1 - \frac{\delta^2}{j_{0,2}^2} \right)^{1/2} \inf_{|r - j_{0,2}| < \delta } |J_0'(r) | \label{eq:eps-b2}
\end{align}
Let the intervals $[a_1(\varepsilon),b_1(\varepsilon)]$, $[a_2(\varepsilon),b_2(\varepsilon)]$, and $[a_3(\varepsilon),b_3(\varepsilon)]$ be the connected components of $\{ |J_0(r) | \leq \varepsilon \}$ containing, respectively, the first, second, and third roots $j_{0,1}$, $j_{0,2}$, $j_{0,3}$ of $J_0$.
Assume further that
\begin{align}
&\delta < \min\big( a_2(\varepsilon) - j_{0,1}, j_{0,2} - b_1(\varepsilon), a_3(\varepsilon) - j_{0,2} \big) \label{eq:delta-diffs1} \\
&b_1(\varepsilon)^2 - a_1(\varepsilon)^2 < j_{0,1}^2 \label{eq:minvol1} \\
&b_2(\varepsilon)^2 - a_2(\varepsilon)^2 < j_{0,1}^2 \label{eq:minvol2}
\end{align}
Let $A$ be a domain containing all of the annuli
\[
\begin{aligned}
&\{ a_1(\varepsilon) <r<b_1(\varepsilon) \}& && && & \{ j_{0,1}-\delta <r<j_{0,1}+\delta \}& \\
& \{a_2(\varepsilon) <r<b_2(\varepsilon)\} & && && & \{ j_{0,2}-\delta <r<j_{0,2}+\delta \}&
\end{aligned}
\]
Then, whenever $\| p \|_A / |\xi_0| < \varepsilon$, the function $F = J_0(r) + p(r,\theta)/\xi_0$ has a nodal domain of connectivity 1, with one boundary component contained in the annulus $a_1(\varepsilon) < r < b_1(\varepsilon)$ and the other contained in $a_2(\varepsilon) < r < b_2(\varepsilon)$. 
\end{proposition}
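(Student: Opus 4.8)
The plan is to run the argument of Proposition~\ref{prop:stay} near \emph{both} circles $\{r = j_{0,1}\}$ and $\{r = j_{0,2}\}$, and then invoke the minimal-volume bound of Proposition~\ref{prop:minvol} to force the resulting nodal lines to be nested around the origin. First I fix the domain, taking $A$ to be (or to contain) the ball $B(j_{0,2}+\delta)$: hypotheses (\ref{eq:eps-b1})--(\ref{eq:eps-b2}) give $|J_0(j_{0,k}+\delta)| \geq \delta\,\inf_{|r-j_{0,k}|<\delta}|J_0'(r)| > \varepsilon$ for $k=1,2$, so $b_1(\varepsilon), b_2(\varepsilon) < j_{0,2}+\delta$ and this ball contains all four listed annuli, while by (\ref{eq:delta-diffs1}) it reaches neither $a_3(\varepsilon)$ nor any later component of $\{|J_0(r)| < \varepsilon\}$. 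As in (\ref{eq:slab}), whenever $\|p\|_A/|\xi_0| < \varepsilon$ every zero of $F = J_0(r)+p(r,\theta)/\xi_0$ inside $B(j_{0,2}+\delta)$ has radius in $\{|J_0(r)|<\varepsilon\}\cap(0,j_{0,2}+\delta) = (a_1(\varepsilon),b_1(\varepsilon))\cup(a_2(\varepsilon),b_2(\varepsilon))$. Since $F$ does not vanish on $\{r\le a_1(\varepsilon)\}$, on $\{b_1(\varepsilon)\le r\le a_2(\varepsilon)\}$, or on $\{b_2(\varepsilon)\le r\le j_{0,2}+\delta\}$, any connected component of $F^{-1}(0)$ that meets $B(j_{0,2}+\delta)$ cannot leave the ball nor cross these forbidden radial strips, hence lies entirely in one of the two annuli $\{a_1<r<b_1\}$ or $\{a_2<r<b_2\}$; being a bounded connected component of $F^{-1}(0)$, it is compact, and by the Implicit Function Theorem it is a smooth simple closed curve, i.e.\ a Jordan curve.

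Next, exactly as in Section~\ref{sec:newt} and the proof of Proposition~\ref{prop:stay}, hypotheses (\ref{eq:delt}) and (\ref{eq:eps-b1}) let me apply Proposition~\ref{prop:approx} to $G=J_0(r)$ at a point on $\{r=j_{0,1}\}$, producing a zero $z_1$ of $F$, with $|z_1|\in(a_1,b_1)$ by the slab bound and $\delta < a_2(\varepsilon)-j_{0,1}$; similarly (\ref{eq:delt}), (\ref{eq:eps-b2}), and the bounds $\delta < j_{0,2}-b_1(\varepsilon)$ and $\delta < a_3(\varepsilon)-j_{0,2}$ from (\ref{eq:delta-diffs1}) give a zero $z_2$ with $|z_2|\in(a_2,b_2)$. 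So the family $\mathcal N_k$ of nodal lines of $F$ lying in $\{a_k<r<b_k\}$ is non-empty for $k=1,2$, and it is finite since the nodal set is a compact $1$-manifold in a neighbourhood of the closed annulus. The key step is that every $N\in\mathcal N_k$ must enclose the origin: if not, then $\{r<a_k\}$ (connected, containing $0$, disjoint from $N$) lies in the unbounded complementary component of $N$, which together with the fact that the circles $\{r=a_k\}$ and $\{r=b_k\}$ also lie there forces the bounded complementary component $\Omega_N^{\mathrm{in}}$ to be contained in the annulus $\{a_k<r<b_k\}$, of area $\pi(b_k^2-a_k^2) < \pi j_{0,1}^2$ by (\ref{eq:minvol1})--(\ref{eq:minvol2}); descending to an innermost nodal line inside $\Omega_N^{\mathrm{in}}$ (finitely many) yields a genuine nodal domain contained in this annulus, contradicting Proposition~\ref{prop:minvol}. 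Hence each $\mathcal N_k$ is a family of nested Jordan curves all encircling the origin, linearly ordered by inclusion of interiors; let $C_1$ be the outermost element of $\mathcal N_1$ and $C_2$ the innermost element of $\mathcal N_2$.

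Finally, since $C_1\subseteq\{a_1<r<b_1\}\subseteq\{r<a_2\}$ while $C_2$ encircles the origin, $C_1$ lies strictly inside $C_2$; let $R$ be the open annular region strictly inside $C_2$ and strictly outside $C_1$ (homeomorphic to an open annulus by the Schoenflies theorem, hence connected and bounded, with $\partial R = C_1\cup C_2\subseteq F^{-1}(0)$). No nodal line meets $R$: such a line lies wholly in $\{a_1<r<b_1\}$, hence is $C_1$ or lies inside it and is disjoint from $R$; or wholly in $\{a_2<r<b_2\}$, hence is $C_2$ or lies outside it and is disjoint from $R$. Therefore $F$ is nowhere zero on $R$ while $\partial R\subseteq F^{-1}(0)$, so $R$ is precisely a nodal domain of $F$, and its boundary $C_1\cup C_2$ has exactly two connected components, so $R$ has connectivity $1$ with one boundary component in $\{a_1<r<b_1\}$ and the other in $\{a_2<r<b_2\}$, as required. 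I expect the main obstacle to be the bookkeeping in the minimal-volume step---checking that an innermost nodal line genuinely bounds a nodal domain whose area is controlled by Proposition~\ref{prop:minvol}---together with the point that although the two sublevel-set annuli are separated by a radial strip on which $p$ is \emph{not} controlled, every nodal line through $B(j_{0,2}+\delta)$ is nonetheless confined to a single one of those annuli.
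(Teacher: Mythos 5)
Your overall strategy is the same as the paper's: apply Proposition~\ref{prop:approx} at a point of each circle $r=j_{0,1}$ and $r=j_{0,2}$, use the slab bound (\ref{eq:slab}) to confine nodal lines to the two sublevel annuli, and use Proposition~\ref{prop:minvol} together with (\ref{eq:minvol1})--(\ref{eq:minvol2}) to force every such line to encircle the origin; your outermost/innermost bookkeeping with nested Jordan curves then recovers the stated conclusion. The genuine problem is your opening move: you ``take $A$ to be (or to contain) the ball $B(j_{0,2}+\delta)$'' and then assert that \emph{every} zero of $F$ in that ball has radius in $(a_1(\varepsilon),b_1(\varepsilon))\cup(a_2(\varepsilon),b_2(\varepsilon))$. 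But $A$ is not yours to choose: the proposition is stated for an arbitrary domain containing the four listed annuli, and in the application to Theorem~\ref{thm:main_1} it is the annulus $j_{0,1}-\delta<r<j_{0,2}+\delta$, over which the sum $S$ is computed (over the full ball the suprema of $|J_1'(r)|$ and $|J_1(r)/r|$ jump to $1/2$ at $r=0$, so demanding control on the ball would change the quantitative theorem). Under the actual hypothesis $\|p\|_A<\varepsilon|\xi_0|$ you have no control of $p$ on the inner disk $\{r\le j_{0,1}-\delta\}$, so your confinement claim is unjustified there: zeros, indeed entire nodal lines, may occur in that disk. The paper disposes of them by a different mechanism, namely that a nodal line contained in a disk of radius $j_{0,1}-\delta<j_{0,1}$ (or $a_1(\varepsilon)$) would bound a nodal domain of area less than $\pi j_{0,1}^2$, contradicting Proposition~\ref{prop:minvol}; alternatively, in your scheme such lines lie inside the disk $\{r<a_1(\varepsilon)\}$, hence inside $C_1$, and never meet your region $R$. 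So the gap is easily repaired, but as written your proof establishes the conclusion only under a strictly stronger hypothesis than the one stated and later used.

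Relatedly, the worry in your final sentence is aimed at the wrong region: with the paper's choice of $A$ (and a fortiori with yours) the radial strip between the two sublevel annuli lies inside $A$, so $p$ \emph{is} controlled there and (\ref{eq:slab}) forbids zeros at those radii; the only genuinely uncontrolled region inside $B(j_{0,2}+\delta)$ is the inner disk, which is precisely where your first step borrows control it does not have. (If one took the statement completely literally, with $A$ only a small neighbourhood of the four listed annuli, the middle strip would indeed be uncontrolled and the argument---the paper's as well as yours---would need $A$ to contain the whole annulus $j_{0,1}-\delta<r<j_{0,2}+\delta$, as the paper's ``in practice'' choice does.)
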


In practice, $A$ will be the annulus $j_{0,1} - \delta < r < j_{0,2}+\delta$. The values $a_k(\varepsilon)$ and $b_k(\varepsilon)$ can be determined by numerically solving the equation $J_0(r) = \pm \varepsilon$ over different ranges of $r$.

\begin{figure}
\centering
\begin{tikzpicture}[scale=0.95]
\definecolor{1}{gray}{0.9};
\definecolor{2}{gray}{0.8};
\filldraw[fill=1,draw=1] (0,0) circle [radius=3];
\filldraw[fill=white,draw=white] (0,0) circle [radius=2.5];
\filldraw[fill=1,draw=1] (0,0) circle [radius=1.5];
\filldraw[fill=white,draw=white] (0,0) circle [radius=1];
\filldraw[fill=2,draw=2] (1,0) circle (0.6);
\filldraw[fill=2,draw=2] (3,0) circle (0.6);
\draw (0,0) circle (1.25);
\draw (0,0) circle (2.75);
\draw[blue] [samples=100,domain=0:360] plot ({1.25*cos(\x)+0.1*cos(10*\x)},{1.25*sin(\x)});
\draw[blue] [samples=100,domain=0:360] plot ({-0.1+2.75*cos(\x)+0.1*cos(12*\x)},{2.75*sin(\x)});
\draw[blue] (0,1.5) node {$N_1$};
\draw[blue] (0,3) node {$N_2$};
\filldraw[fill=1,draw=1] (0+7.5,0) circle [radius=3];
\filldraw[fill=white,draw=white] (0+7.5,0) circle [radius=2.5];
\filldraw[fill=1,draw=1] (0+7.5,0) circle [radius=1.5];
\filldraw[fill=white,draw=white] (0+7.5,0) circle [radius=1];
\draw (0+7.5,0) circle (1.25);
\draw (0+7.5,0) circle (2.75);
\draw[red] (1+7.5,0) circle (0.618 and 1);
\draw[red] (2.875+7.5,0)  arc[radius = 2.875, start angle= 0, end angle= 270];
\draw[red] (2.625+7.5,0)  arc[radius = 2.625, start angle= 0, end angle= 270];
\draw[red] (0+7.5,-2.875) .. controls (0+7.5,-2.875) and (0.5+7.5,-2.875) .. (0.5+7.5,-2.75) .. controls (0.5+7.5,-2.625) and (0.5+7.5,-2.625) .. (0+7.5,-2.625);
\draw[red] (2.875+7.5,0) .. controls (2.875+7.5,0) and (2.875+7.5,-0.5) .. (2.75+7.5,-0.5) .. controls (2.625+7.5,-0.5) and (2.625+7.5,-0.5) .. (2.625+7.5,0);
\end{tikzpicture}
\caption{
At left: nodal lines $N_1$, $N_2$ as in Proposition~\ref{prop:stay1}, drawn in blue. The circles on which $J_0(r)$ vanishes are drawn in black. The shaded annuli are the first two components of the region $\{ |J_0(r)| > \varepsilon \}$, bounded by $a_k(\varepsilon)$ and $b_k(\varepsilon)$. The balls $B(x_1,\delta)$ and $B(x_2,\delta)$ are shaded in dark gray. At right: an inner nodal line that cannot occur because it would enter the region where $|J_0(r)| > \varepsilon$, and an outer nodal line that cannot occur because it would bound a nodal domain with volume less than the minimum allowed by Faber-Krahn. The same two reasons rule out any additional nodal lines from increasing the connectivity of the domain bounded by $N_1$ and $N_2$.
For illustrative purposes, the figure is not drawn to scale. 
}
\end{figure}
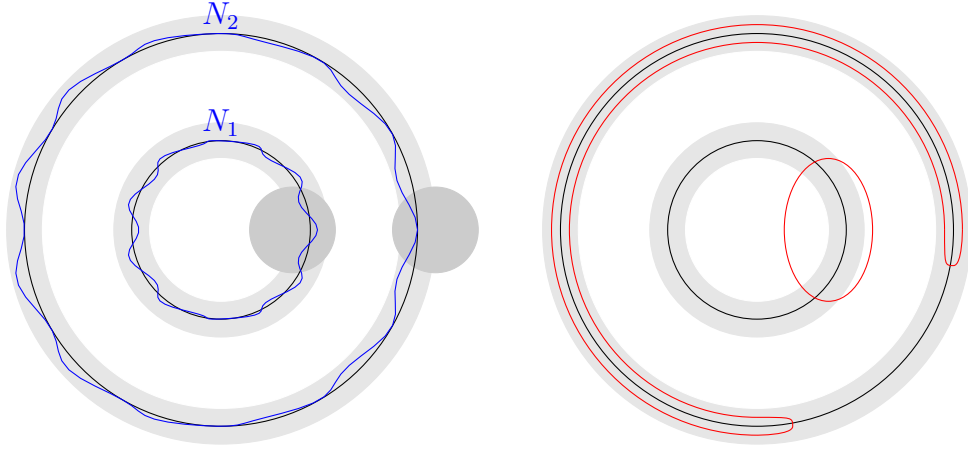

\begin{proof}[Proof of Proposition~\ref{prop:stay1}]
Let $x_1 = (j_{0,1},0)$ and $x_2 = (j_{0,2},0)$ be points on the circles $r=j_{0,1}$ and $r=j_{0,2}$.
We will show first that, with non-zero probability, $F$ has a zero in each ball $B(x_1,\delta)$ and $B(x_2,\delta)$.
Let $A$ be a domain containing both $B(x_1,\delta)$ and $B(x_2,\delta)$.
In the event that $\| p \|_A < \varepsilon |\xi_0|$, which occurs with non-zero probability, Proposition~\ref{prop:approx} applies to $F=J_0(r) + p(r,\theta)/\xi_0$.
The result is that, if
\[
\varepsilon < \frac{\delta}{1+\delta} \inf_{ B(x_1,\delta) \cup B(x_2,\delta) } |J_0'(r) \cos{\theta} |
\]
then $F$ has a zero in each ball $B(x_1,\delta)$ and $B(x_2,\delta)$. 
To obtain a non-trivial result, we must assume $\delta$ is small enough that the infimum is non-zero. We claim that (\ref{eq:delt}) suffices.

As in the proof of Proposition~\ref{prop:stay}, (\ref{eq:delt}) is enough to apply Proposition~\ref{prop:approx} to $J_0$ in $B(x_1,\delta)$.  We claim that this assumption is also enough to apply Proposition~\ref{prop:approx} in $B(x_2,\delta)$.  To produce a zero near $x_2$, we need $J_0'(r) \cos{\theta} \neq 0$ on $B(x_2,\delta)$.
This will follow provided that $J_0'(r) \neq 0$ for $j_{0,2} - \delta < r < j_{0,2} + \delta$ and $\cos{\theta} \neq 0$ for the range of angles occurring for points in $B(x_2,\delta)$. 
Since $J_0' = -J_1$, the radial requirement is that the interval $[j_{0,2}-\delta, j_{0,2}+\delta]$ must not contain any zeros of $J_1$.
This constrains $\delta$ further:
\[
j_{1,1} < j_{0,2} - \delta \qquad \text{and} \qquad j_{0,2} + \delta < j_{1,2}
\]
where $j_{1,k}$ denotes the $k$-th positive root of $J_1$.
These constraints amount to $\delta < j_{0,2} - j_{1,1} = 1.6883\ldots$ and $\delta < j_{1,2} - j_{0,2}=1.4955\ldots$ , both of which are implied by (\ref{eq:delt}).
In the angular variable, we have a lower bound
\[
\cos{\theta} > \sqrt{ 1 - \frac{\delta^2}{j_{0,2}^2 } }
\]
so that $\cos{\theta} \neq 0$ within $B(x_2,\delta)$ as long as $\delta < j_{0,2} = 5.52\ldots$, which certainly follows from the constraints imposed earlier.  As claimed, (\ref{eq:delt}) by itself suffices to produce a zero near each of $x_1$ and $x_2$.

To guarantee a separate zero in each ball, we assume first of all that $B(x_1,\delta)$ and $B(x_2,\delta)$ are disjoint. This requires $j_{0,1} + \delta < j_{0,2} - \delta$, or
\[
\delta < \frac{j_{0,2} - j_{0,1} }{2} = 1.557626\ldots
\]
which follows once again from (\ref{eq:delt}).
We must also prevent $N_1$ and $N_2$ from merging outside of their starting balls, and this is the reason for (\ref{eq:delta-diffs1}). To do so, we will choose $\varepsilon$ small enough that each $N_k$ is contained in its respective annulus $a_k(\varepsilon) < r < b_k(\varepsilon)$.
The argument is as before, except the domain $A$ must be large enough to include both balls $B(x_k,\delta)$ and both annuli $\{ a_k(\varepsilon) < r < b_k(\varepsilon) \}$.  The assumption (\ref{eq:delta-diffs1}) guarantees that the intersection of $B(x_k,\delta)$ with $\{ |J_0| \leq \varepsilon \}$ occurs in a single annulus $\{ a_k(\varepsilon) < r < b_k(\varepsilon) \}$ rather than straddling multiple components.  Indeed, since $j_{0,1}+\delta < a_2(\varepsilon)$, $B(x_1,\delta)$ cannot intersect $a_2(\varepsilon) < r < b_2(\varepsilon)$ or any later components. 
Likewise, $j_{0,2} - \delta > b_1(\varepsilon)$ and $j_{0,2}+\delta < a_3(\varepsilon)$ ensure that $a_2(\varepsilon) < r < b_2(\varepsilon)$ is the only one intersected by $B(x_2,\delta)$.
This confirms that $N_1$ intersects $a_1(\varepsilon) < r < b(\varepsilon)$, while $N_2$ intersects $a_2(\varepsilon) < r < b_2(\varepsilon)$. 
Then (\ref{eq:slab}) prevents the nodal lines from leaving the annuli in which they begin.  In particular, $N_1$ and $N_2$ are distinct.

Finally, the role of (\ref{eq:minvol1}) and (\ref{eq:minvol2}) is to guarantee that each of the annuli $a_1(\varepsilon) < r < b_1(\varepsilon)$ and $a_2(\varepsilon) < r < b_2(\varepsilon)$ has volume less than $\pi j_{0,1}^2$. By Proposition~\ref{prop:minvol}, it impossible for either region to fully contain a nodal domain. Together with (\ref{eq:slab}), which prevents all nodal lines from intersecting $\{ |J_0|> \varepsilon \} \cap A$, this rules out any extraneous nodal lines aside from $N_1$ and $N_2$ in the annulus $A$.
The minimal volume property also shows that each of $N_1$ and $N_2$ has winding number 1 around the origin, or else they would enclose an impossibly small volume. 
Lastly, these arguments eliminate the possibility of nodal lines contained entirely within $B(j_{0,2}+\delta) \setminus A$, since a ball of radius $j_{0,1}-\delta$ or $a_1(\varepsilon)$ has volume smaller than allowed by Proposition~\ref{prop:minvol}.
As claimed, $N_1$ and $N_2$ are the sole boundary components of a domain of connectivity exactly 1.  

\end{proof}

To complete the proof of Theorem~\ref{thm:main_1}, we take $\delta$ and $\varepsilon$ small enough to satisfy the hypotheses of Proposition~\ref{prop:stay1} and let $A$ be a sufficiently large annulus as above.  Take $R =  j_{0,2}+\delta$ so that there is a nodal domain of connectivity 1 contained in the disk $D$ of radius $R$, whenever $\| p \|_{A} < \varepsilon |\xi_0|$. The rest of the proof is the same as for Theorem~\ref{thm:main_0}, with $j_{0,2}$ in place of $j_{0,1}$. 

Let us take $\delta=1/2$ as before. The largest $\varepsilon$ allowed by Proposition~\ref{prop:stay1} is
\[
\varepsilon = \frac{1}{\sqrt{2} } \frac{\delta}{1+\delta}  \left( 1 - \frac{\delta^2}{j_{0,2}^2} \right)^{1/2} \inf_{|r - j_{0,2}| < \delta } |J_0'(r) | = 0.064008\ldots
\]
For this choice,
\[
\begin{aligned}
&a_1(\varepsilon)=2.284353\ldots & && &b_1(\varepsilon)=2.531685\ldots & \\
&a_2(\varepsilon)=5.334081\ldots & && &b_2(\varepsilon)=5.712642\ldots & \\
&a_3(\varepsilon)=8.418990\ldots & && && \\
\end{aligned}
\]
In particular, the constraints (\ref{eq:delta-diffs1}), (\ref{eq:minvol1}), and (\ref{eq:minvol2}) are all satisfied.
We take the domain $A$ to be the annulus $j_{0,1}-\delta < r < j_{0,2}+\delta$. 
For all but finitely many $n$, the functions $|J_n(r)|$, $|J_n'(r)|$, and $|J_n(r)/r|$ achieve their maxima at the upper endpoint $r=j_{0,2}+\delta$. There are exceptions for $1 \leq n \leq 6$. 
We summarize their contributions to $S$ in Table~\ref{table:contributions}. For $n \geq 7$, all the maxima are achieved at $j_{0,2}+\delta$ and the sum is
\[
S_{\geq 7} = \sum_{n=7}^{\infty} \left( \left(1 + \frac{n}{j_{0,2}+\delta} \right)|J_n(j_{0,2}+\delta)| + |J_n'(j_{0,2}+\delta)|  \right) = 0.689769\ldots
\]
The final tally is
\[
S = S_1 + S_2+S_3+S_4+S_5 + S_6 + S_{\geq 7} = 5.215701\ldots
\]
With $\varepsilon = 0.064008\ldots$ and $S=5.215701\ldots$, the bound from Proposition~\ref{prop:prob} is
\[
\prob( \| p \|_A < \varepsilon |\xi_0| ) \geq 2 \int_{\sqrt{\pi}S/\varepsilon}^{\infty} \left(1 - \frac{\sqrt{\pi}S}{\varepsilon x} \right) e^{-x^2/2} dx / \sqrt{2\pi} > 10^{-4532}.
\]

\begin{table} \label{table:contributions}
\caption{
Each $n$ contributes $S_n = |J_n(u_n)|+n|J_n(v_n)/v_n| + |J_n'(w_n)|$ to the final sum $S$, where $u_n$, $v_n$, $w_n$ are where $|J_n(r)|$, $|J_n(r)/r|$, and $|J_n'(r)|$ achieve their maxima over the interval $j_{0,1}-\delta \leq r \leq j_{0,2}+\delta$ with $\delta=1/2$. 
The maxima are achieved at critical points, except $u_1 = j_{0,1}-\delta$ and $w_3=u_5=u_6=v_6=j_{0,2}+\delta$. Values have been truncated to 4 digits.}
\begin{tabular}{rlllllll}
$n$ & $u_n$ & $|J_n(u_n)|$  & $v_n$  & $|J_n(v_n)/v_n|$ & $w_n$ & $|J_n'(w_n)|$ & $S_n$\\
\midrule
$1$ & 1.9048 & 0.5810 & $5.1356$ & 0.0661 & $3.5183$ & 0.4194 & 1.0666\\
$2$ & $3.0542$ & 0.4864 & $2.2999$ & 0.1799 & $4.8879$ & 0.3478 & 1.0143\\
$3$ & $4.2011$ & 0.4343 & $3.6112$ & 0.1107 & $6.0200$ & 0.3009 & 0.8461\\
$4$ & $5.3175$ & 0.3996 & $4.8112$ & 0.0787 & $3.6804$ & 0.1548 & 0.6333\\
$5$ & $6.0200$ & 0.3631 & $5.9623$ & 0.0603 & $4.7082$ & 0.1338 & 0.5573\\
$6$ & $6.0200$ & 0.2481 & $6.0200$ & 0.0412 & $5.7285$ & 0.1188 & 0.4082\\
\end{tabular}
\end{table}

\section{ Proof of Theorem~\ref{thm:sym} } \label{sec:symmetrization}

The radial symmetrization of $F: \R^2 \rightarrow \R$ around a point $z$ is defined as
\begin{equation} \label{e:sz}
S_z F(x) = \frac{1}{2\pi} \int_{S^1} F(\theta x) d\theta
\end{equation}
where $\theta x$ denotes the rotation of $x$ by angle $\theta$ around $z$, and $d\theta$ denotes the infinitesimal arc length on the circle $S^1$. 
By construction, $S_z f(x)$ depends only on $|x-z|$.
If $F(z)$ solves $\Delta F + F = 0$, then so does its symmetrization. Up to a constant multiple, there is only one radial eigenfunction around $z$ with no singularity at $z$, and it follows that
\begin{equation} \label{e:eigen-sym}
S_z F(x) = F(z) J_0(|x-z|).
\end{equation}
Moreover, if $F$ has no zeros on some circle around $z$, then $S_z F$ and $F$ have the same sign on that circle.
This leads to the following proposition, which can be used to show that $F$ and $J_0$ have similar nodal sets. 
The expected number of intersections between $F^{-1}(0)$ and a given curve can be calculated using the Kac-Rice formula, and this gives some control on the probability with which $F$ will have no zeros on a circle.

\begin{proposition} \label{prop:sym}
Suppose that $\Delta F + F = 0$ and $F$ has no zeros on a circle $|x-z| = r$ of radius in the interval
\[
j_{0,k-1} < r < j_{0,k}
\]
where $0 < j_{0,1} < \ldots < j_{0,k} < \ldots$ are the roots of the Bessel function $J_0$, and for uniformity of notation, we put $j_{0,0}=0$.
Then $F$ and its symmetrization $S_z F$ have the same sign on that circle.
This sign is opposite to that of $F(z)$ if $k$ is even, or equal to the sign of $F(z)$ if $k$ is odd.
\end{proposition}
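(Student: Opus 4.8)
The plan is to combine the explicit formula (\ref{e:eigen-sym}) for the symmetrization with the elementary fact that a continuous function which does not vanish on a circle has constant sign there, and then to read off the parity statement from the sign pattern of $J_0$ between consecutive zeros. First I would observe that, being a solution of $\Delta F + F = 0$, the function $F$ is continuous (indeed real-analytic), so on the connected set $\{\,|x-z| = r\,\}$ it has a constant sign, say $\sigma \in \{+1,-1\}$. Now fix any $x$ with $|x-z| = r$. As $\theta$ ranges over $S^1$, the rotated point $\theta x$ traces out exactly this circle, so $F(\theta x)$ has the constant sign $\sigma$; averaging in (\ref{e:sz}) shows $\operatorname{sign}\big(S_z F(x)\big) = \sigma = \operatorname{sign}\big(F(x)\big)$. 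This already gives the first assertion, that $F$ and $S_z F$ agree in sign on the circle (it is the ``moreover'' remark preceding the proposition, made precise).

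Next I would invoke (\ref{e:eigen-sym}), which states $S_z F(x) = F(z)\, J_0(r)$ for every $x$ on the circle. Combining this with the previous paragraph yields
\[
\sigma \;=\; \operatorname{sign}\big(F(x)\big) \;=\; \operatorname{sign}\big(F(z)\big)\cdot \operatorname{sign}\big(J_0(r)\big),
\]
and in particular $F(z) \neq 0$, so that the dichotomy ``equal sign / opposite sign'' is meaningful; note also $J_0(r) \neq 0$, which is automatic since $j_{0,k-1} < r < j_{0,k}$ lies strictly between consecutive zeros of $J_0$. It remains only to compute $\operatorname{sign}\big(J_0(r)\big)$ on that interval.

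For the last step I would use that $J_0(0) = 1 > 0$, that the positive zeros $j_{0,1} < j_{0,2} < \cdots$ of $J_0$ are simple, and hence that $J_0$ changes sign at each of them. With the convention $j_{0,0} = 0$, this makes $J_0$ positive on $(j_{0,0},j_{0,1})$, negative on $(j_{0,1},j_{0,2})$, and in general $\operatorname{sign}\big(J_0\big) = (-1)^{k-1}$ on $(j_{0,k-1},j_{0,k})$. Substituting gives $\sigma = (-1)^{k-1}\operatorname{sign}\big(F(z)\big)$, i.e.\ the common sign of $F$ and $S_z F$ on the circle equals $\operatorname{sign}\big(F(z)\big)$ when $k$ is odd and is opposite to it when $k$ is even, as claimed. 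I do not expect any genuine obstacle here: the only points that need a word of care are recording that the hypothesis forces $F(z) \neq 0$, and the bookkeeping of the parity in the sign pattern of $J_0$.
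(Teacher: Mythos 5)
Your proposal is correct and follows essentially the same route as the paper: use the formula $S_z F(x) = F(z) J_0(|x-z|)$ together with the fact that $F$, having no zeros on the circle, shares its (constant) sign with its rotational average, then read off the parity from the sign pattern of $J_0$ between consecutive zeros. The only difference is that you spell out the averaging argument for why $F$ and $S_z F$ agree in sign (and note $F(z)\neq 0$), which the paper simply asserts.
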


\begin{proof}
Suppose $F$ has no zeros on a circle $|x-z|= r$. Then $F$ and $S_z F$ have the same sign on that circle. 
In (\ref{e:eigen-sym}), the factor $J_0(|x-z|)$ is positive for $|x-z| < j_{0,1}$, negative on intervals of the form $j_{0,2m-1} < |x-z| < j_{0,2m}$, and positive again for $j_{0,2m} < |x-z| < j_{2m+1}$.
Thus (\ref{e:eigen-sym}) shows that the sign of $S_z F(x)$ is equal or opposite to the sign of $F(z)$ as claimed.
\end{proof}

We write $G(r)$ for the set of centers $z \in \R^2$ such that $F$ has no zeros on the circle of radius $r$ around $z$. Suppose $0 \in G(r_1) \cap \ldots \cap G(r_M)$, where $M \geq 1$ and the radii satisfy
\begin{equation} \label{e:interlace}
j_{0,1} < r_1 < j_{0,2} <  \ldots < j_{0,M} < r_M < j_{0,M+1}.
\end{equation}
Assume also that $F(0) \neq 0$; this will almost surely be the case when $F$ is a monochromatic random wave. 
The locations of $r_k$ among the Bessel zeros guarantee, by Proposition~\ref{prop:sym}, that $F$ has alternating signs on the circles of radius $r_1,\ldots,r_M$ around 0, with the sign on $|x|=r_1$ opposite to that of $F(0)$. 
It follows that $F$ has a nodal line $N_1$ contained in the ball $|x| < r_1$, and in case $M \geq 2$, another nodal line $N_k$ in each annulus $r_{k-1} < |x| < r_{k}$ for $k=2,\ldots,M$. It is convenient to write $A_k$ for the annulus $r_{k-1} < |x| < r_k$, and introduce another radius $r_0 = 0$ so that $N_k \subset A_k$ for all $k$, including $k=1$.

We would like to conclude that the ball $B(r_M)$ around 0 contains a chain of $M$ nodal domains, each nested in the next and with no other domains branching off from this chain. 
This would yield a simply connected domain if $M=1$, or a domain of connectivity 1 if $M=2$, or further nesting for larger values of $M$.
The obstacle is that $N_1,\ldots, N_M$ need not be the only nodal lines contained in $B(r_M)$. 
In some cases, extra nodal lines can simply be included as part of the chain, and one finds the desired configuration in a ball even smaller than $B(r_M)$. For example, if the goal is to produce a simply connected domain, then an arbitrary nesting configuration suffices because one can always pass to one of the configuration's innermost domain(s).
For $M \geq 2$, it is necessary to rule out some configurations, and this can be done using the sign conditions on $F$ and the fact that each nodal domain occupies a certain volume.

%

As a consequence of Proposition~\ref{prop:minvol}, we have
\begin{proposition} \label{prop:max-per-annulus}
The number of nodal domains contained fully within the annulus $A_k$ is at most
\[
\frac{r_k^2 - r_{k-1}^2 }{j_{0,1}^2}.
\]
In particular, if $r_1 < \sqrt{2} j_{0,1} = 3.4009\ldots$, then there can be at most one nodal domain contained entirely within $A_1$, and if $r_k^2 - r_{k-1}^2 < j_{0,1}^2$, then there cannot be any nodal domains contained entirely within $A_k$.
\end{proposition}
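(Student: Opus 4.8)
The plan is to exploit two facts already in hand: nodal domains are pairwise disjoint, and each occupies a definite amount of area. First I would fix $k$ and let $D_1, \ldots, D_m$ denote the nodal domains of $F$ that are contained entirely within the annulus $A_k$. Since $F$ almost surely satisfies $(\Delta + 1)F = 0$, Proposition~\ref{prop:minvol} applies to each $D_i$ and gives $\vol(D_i) \geq \pi j_{0,1}^2$.

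Next, because distinct nodal domains are disjoint and $D_1, \ldots, D_m$ all sit inside $A_k$, their volumes add up to at most the volume of $A_k$, namely
\[
\sum_{i=1}^m \vol(D_i) \leq \vol(A_k) = \pi\left(r_k^2 - r_{k-1}^2\right).
\]
Combining the two estimates yields $m \, \pi j_{0,1}^2 \leq \pi(r_k^2 - r_{k-1}^2)$, that is $m \leq (r_k^2 - r_{k-1}^2)/j_{0,1}^2$, which is the claimed bound.

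The two special cases then follow by arithmetic together with the integrality of $m$. Taking $k = 1$ with the convention $r_0 = 0$, the bound reads $m \leq r_1^2/j_{0,1}^2$; if $r_1 < \sqrt{2}\, j_{0,1}$ this is strictly less than $2$, forcing $m \leq 1$. If instead $r_k^2 - r_{k-1}^2 < j_{0,1}^2$, the bound gives $m < 1$, hence $m = 0$.

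I do not anticipate any genuine obstacle here: once Proposition~\ref{prop:minvol} is available the argument is essentially a one-line area count. The only point worth a word of care is bookkeeping on what ``contained fully within $A_k$'' means — it should be read as in the counting convention for $N(F,R)$, so that the domains genuinely lie inside $A_k$ and their disjoint union really does fit, making the area comparison legitimate.
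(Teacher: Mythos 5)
Your argument is correct and is exactly the one the paper intends: the proposition is stated as an immediate consequence of Proposition~\ref{prop:minvol}, namely the area count comparing the minimal nodal-domain volume $\pi j_{0,1}^2$ with $\vol(A_k) = \pi(r_k^2 - r_{k-1}^2)$, plus integrality for the two special cases. Nothing further is needed.
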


These volume considerations are enough to rule out extra nodal lines and show that $F$ has a nodal domain of connectivity exactly 1.
\begin{lemma} \label{lem:con1}
Suppose $r_1 < \ldots < r_M$ satisfy
\begin{align}
j_{0,1} &< r_1 < \sqrt{2} j_{0,1}  \label{e:r1} \\
j_{0,2} &< r_M < j_{0,3} \label{e:rm}
\end{align}
and, for $2 \leq k \leq M$,
\begin{equation}
r_k^2 - r_{k-1}^2 < j_{0,1}^2. \label{e:steps}
\end{equation}
Assume $\Delta F + F = 0$ and that $F$ has no zeros on the circles $|x|=r_k$.
Suppose also that $F(0) \neq 0$ and the zero set of $F$ is regular.
Then $F$ has a nodal domain of connectivity 1 contained in the ball $B(r_M)$ around 0.
\end{lemma}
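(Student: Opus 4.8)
The plan is to pick out one nodal domain explicitly and to show that it has connectivity exactly $1$, by combining the sign information supplied by symmetrization (Proposition~\ref{prop:sym}) with the volume bounds of Propositions~\ref{prop:minvol} and \ref{prop:max-per-annulus}. First I would replace $F$ by $-F$ if necessary so that $F(0)>0$; this changes no nodal domain or its topology. Write $C_k=\{|x|=r_k\}$, set $A_1=B(r_1)$, and $A_k=\{r_{k-1}<|x|<r_k\}$ for $2\le k\le M$. Since $F$ has no zeros on $C_k$ while the symmetrization $S_0F=F(0)J_0(|x|)$ vanishes on the circles $|x|=j_{0,m}$, no $r_k$ can be a zero of $J_0$; as the $r_k$ increase from $r_1\in(j_{0,1},\sqrt2\,j_{0,1})\subset(j_{0,1},j_{0,2})$ to $r_M\in(j_{0,2},j_{0,3})$, each lies in $(j_{0,1},j_{0,2})\cup(j_{0,2},j_{0,3})$, and Proposition~\ref{prop:sym} then gives $F<0$ on $C_1$ and $F>0$ on $C_M$.

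The key lemma I would establish first is a \emph{capture statement}: every nodal domain contained in $B(r_M)$, other than the nodal domain $\Omega_0$ of the origin, contains one of the circles $C_1,\dots,C_M$ in its entirety. Indeed, since $F(0)>0$ and $F<0$ on $C_1$, one has $\overline{\Omega_0}\subset B(r_1)$; then Proposition~\ref{prop:max-per-annulus} (here the hypothesis $r_1<\sqrt2\,j_{0,1}$ enters) shows that $A_1=B(r_1)$ contains at most one nodal domain, so $\Omega_0$ is the only one, and in particular $\Omega_0$ is simply connected, with boundary a single smooth Jordan curve $N_1$ and $\Omega_0=\operatorname{int}(N_1)$. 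If now $\omega\subset B(r_M)$ is a nodal domain meeting none of the $C_k$, then, since $C_M\cap B(r_M)=\varnothing$, $\omega$ is contained in one of the disjoint open sets $A_1,\dots,A_M$; for $k\ge2$ this is impossible because $\vol(A_k)=\pi(r_k^2-r_{k-1}^2)<\pi j_{0,1}^2$ contradicts Proposition~\ref{prop:minvol}, and for $k=1$ it forces $\omega=\Omega_0$.

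Next I would take $\Omega'$ to be the nodal domain that borders $\Omega_0$ along $N_1$. Because $F$ changes sign across a nodal line, $F<0$ on $\Omega'$; hence $\Omega'$ meets neither $C_M$ (where $F>0$) nor $\Omega_0$, and, as $\Omega'$ contains points arbitrarily close to $N_1\subset B(r_1)$, connectedness forces $\Omega'\subset B(r_M)$. Being bounded, $\Omega'$ has the standard planar description: one outer boundary Jordan curve together with finitely many inner boundary Jordan curves having pairwise disjoint interiors, its connectivity being the number of inner curves. One inner curve is $N_1$, whose interior $\Omega_0$ is a hole of $\Omega'$, so the connectivity of $\Omega'$ is at least $1$. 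To see that it is at most $1$, suppose $\gamma$ is a second inner boundary curve of $\Omega'$. Then $F>0$ just inside $\gamma$, so $\operatorname{int}(\gamma)$ contains a nodal domain $\omega$; moreover $\operatorname{int}(\gamma)$ lies inside the outer boundary curve of $\Omega'$, hence inside $B(r_M)$, and $\omega\ne\Omega_0$ because $\Omega_0=\operatorname{int}(N_1)$ is disjoint from $\operatorname{int}(\gamma)$. The capture statement then yields a circle $C_k\subset\omega\subset\operatorname{int}(\gamma)$, whence $\overline{B(r_k)}\subset\operatorname{int}(\gamma)$ and therefore $N_1\subset\overline{\Omega_0}\subset\overline{B(r_1)}\subset\overline{B(r_k)}\subset\operatorname{int}(\gamma)$. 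But $N_1$ and $\gamma$ are distinct inner boundary curves of $\Omega'$, so their interiors are disjoint, and $N_1\subset\operatorname{int}(\gamma)$ is absurd. Hence $\Omega'$ has connectivity exactly $1$ and is contained in $B(r_M)$, which is the assertion of Lemma~\ref{lem:con1}.

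I expect the main obstacle to be the last step — excluding the extra inner boundary curve $\gamma$ — since this is where a hypothetical piece of the nodal set has to be turned into a nodal domain squeezed into too small a region, and where one must track carefully which circles $C_k$ are enclosed by which Jordan curves; the remaining bookkeeping is a fairly mechanical marriage of the Jordan curve theorem with the two input propositions. The conditions $r_k^2-r_{k-1}^2<j_{0,1}^2$ and $r_1<\sqrt2\,j_{0,1}$ are used precisely (and only) inside the capture statement, while the locations of $r_1$ and $r_M$ relative to the Bessel zeros are used only to fix the signs of $F$ on $C_1$ and $C_M$.
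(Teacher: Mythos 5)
Your proposal is correct and uses essentially the same ingredients and strategy as the paper's proof: the symmetrization result (Proposition~\ref{prop:sym}) pins down the signs of $F$ on $|x|=r_1$ and $|x|=r_M$, while the Faber--Krahn volume bounds (Propositions~\ref{prop:minvol} and \ref{prop:max-per-annulus}) exclude extra nodal domains squeezed into $B(r_1)$ or the annuli $A_k$, forcing a domain with exactly one hole. The only difference is presentational: where the paper argues directly with nodal lines in the annuli and rules out extraneous ones by volume, you organize the same exclusion through the ``capture statement'' and the Jordan-curve description of the domain $\Omega'$ adjacent to $\Omega_0$, which makes the topological bookkeeping more explicit but does not change the underlying argument.
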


\begin{proof}
Write $A_k$ for the annulus $r_{k-1} < |x| < r_k$, and $A_1$ for the ball $|x| < r_1$.
Each nodal line of $F$ intersecting the ball $B(r_M)$ must lie within one of the annuli $A_k$ because $F$ has no zeros on any of the circles $|x| = r_k$ separating them.
Proposition~\ref{prop:sym} implies that $F$ has a nodal line $N_1$ contained in $A_1$, the range (\ref{e:r1}) being sufficient because $\sqrt{2}j_{0,1} < j_{0,2}$.
Likewise, by (\ref{e:rm}), the annulus between $r_1$ and $r_M$ contains another nodal line of $F$, which is disjoint from $N_1$ because $F$ has no zeros on the circle $|x| = r_1$. 

By Proposition~\ref{prop:max-per-annulus}, (\ref{e:r1}) and (\ref{e:steps}) imply that there is at most one nodal domain contained entirely within the ball $A_1$, and none within any annulus $A_k$ for $k \geq 2$. 
In particular, $N_1$ is the only nodal line contained in $B(r_1)$.

Suppose $A_k$ is the first annulus for $k \geq 2$ to contain a nodal line of $F$, say $N_2$. Then $N_2$ must surround $N_1$, and there can be no other nodal lines in $A_k$, or else there would be a nodal domain contained entirely within $A_k$, contrary to (\ref{e:steps}). It follows that $N_1$ and $N_2$ bound a nodal domain of connectivity 1.

\end{proof}

The next step quantifies the probability of satisfying the hypothesis $0 \in G(r_1) \cap \ldots \cap G(r_M)$.
The case of a single radius was proved in \cite{IR19}.
\begin{proposition} \label{prop:kr}
Let $0 < r_1 < \ldots < r_M$ with $J_0(r_k) \neq 0$.
For any $T > 0$, the random wave $\xi_0 J_0(r) + p(r,\theta)$ as in (\ref{eq:plane_wave_complex}) satisfies
\[
\prob \left( 0 \in \bigcap_k G(r_k) \right) \geq 2 \int_T^{\infty} \left( 1 - \frac{\sqrt{2}}{2} \sum_k  \frac{r_k}{\sqrt{1-J_0(r_k)^2} } \exp\left( \frac{-t^2 J_0(r_k)^2 }{2(1-J_0(r_k)^2)}\right) \right) d\gamma(t)
\]
where $d\gamma(t) = e^{-t^2/2} dt / \sqrt{2\pi}$ is the density of a standard Gaussian.
\end{proposition}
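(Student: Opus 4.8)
The plan is to bound the probability of the event $0 \in \bigcap_k G(r_k)$ by conditioning on $\xi_0$. Recall that $F = \xi_0 J_0(r) + p(r,\theta)$, so on the circle $|x| = r_k$ the function $F$ has no zeros provided $|\xi_0 J_0(r_k)| > \sup_{|x|=r_k} |p(x)|$; more usefully, since $F$ and its symmetrization $S_z F = F(z) J_0(|x-z|)$ agree in sign on a zero-free circle, I would use the Kac--Rice formula to control the expected number of zeros of the restriction of $p$ (equivalently of $F$, once $\xi_0$ is fixed) to the circle. Writing $d\gamma(t)$ for the standard Gaussian density, the starting point is
\[
\prob\left(0 \in \bigcap_k G(r_k)\right) = \int_{-\infty}^{\infty} \prob\left(0 \in \bigcap_k G(r_k) \,\middle|\, \xi_0 = t\right) d\gamma(t),
\]
and one discards the part of the integral where $|t|$ is too small for the estimate to be useful, keeping only $|t| > T$ (which contributes the factor $2\int_T^\infty$ after using the symmetry $t \mapsto -t$).

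**Next I would** estimate $\prob\left(0 \notin G(r_k) \mid \xi_0 = t\right)$ for a single radius via the first-moment (Kac--Rice) bound: the probability that $F$ has a zero somewhere on the circle $|x| = r_k$ is at most the expected number of such zeros, which is an integral over the circle of the Kac--Rice density of the Gaussian process $\theta \mapsto F(r_k, \theta)$ conditioned on $\xi_0 = t$. This conditioned process is Gaussian with mean $t J_0(r_k)$ and a stationary covariance structure inherited from $p$; its pointwise variance is $1 - J_0(r_k)^2$ (since $F$ itself has unit variance and the $\xi_0 J_0(r)$ component is being fixed) and the variance of its angular derivative is a computable constant times $r_k^2$. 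Plugging into the Kac--Rice density and integrating the Gaussian factor $\exp(-(\text{mean})^2/(2\,\text{var}))$ around the circle produces exactly the term
\[
\frac{\sqrt{2}}{2}\,\frac{r_k}{\sqrt{1 - J_0(r_k)^2}}\,\exp\left(\frac{-t^2 J_0(r_k)^2}{2(1 - J_0(r_k)^2)}\right).
\]

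**Then I would** combine the single-radius bounds by a union bound over $k = 1, \dots, M$: conditioned on $\xi_0 = t$,
\[
\prob\left(0 \in \bigcap_k G(r_k) \,\middle|\, \xi_0 = t\right) \geq 1 - \sum_k \prob\left(0 \notin G(r_k) \,\middle|\, \xi_0 = t\right) \geq 1 - \frac{\sqrt{2}}{2}\sum_k \frac{r_k}{\sqrt{1 - J_0(r_k)^2}}\exp\left(\frac{-t^2 J_0(r_k)^2}{2(1 - J_0(r_k)^2)}\right).
\]
Integrating this against $d\gamma(t)$ over $|t| > T$, exploiting the evenness of the integrand to get the factor $2$, and discarding the remaining mass (which is nonnegative, so dropping it only weakens the bound) yields precisely the stated inequality. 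The hypothesis $J_0(r_k) \neq 0$ is what keeps every denominator $\sqrt{1 - J_0(r_k)^2}$ away from being problematic and, more importantly, makes the exponential genuinely decaying in $t$ so that the bound is nontrivial for large $T$.

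**The main obstacle** I expect is the Kac--Rice computation for the conditioned process: one must justify that restricting the monochromatic wave to a circle and conditioning on $\xi_0$ gives a smooth stationary Gaussian field to which Kac--Rice applies, identify its variance as $1 - J_0(r_k)^2$ (rather than merely bounding it), and compute the derivative variance to pin down the constant $\sqrt{2}/2$ and the linear-in-$r_k$ dependence. Everything else — the conditioning decomposition, the union bound, the symmetry argument, and discarding nonnegative tails — is routine. The single-radius case is already handled in \cite{IR19}, so the genuinely new content is checking that the union bound over the $M$ radii costs nothing beyond summing the individual Kac--Rice terms, which is immediate once the single-radius estimate is conditioned on $\xi_0$.
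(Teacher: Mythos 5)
Your overall architecture --- condition on $\xi_0$, apply the union bound over the $M$ circles conditionally, bound each conditional probability by a Kac--Rice first moment, and keep only $|t|>T$ --- is the same as the paper's, and your ordering (union bound \emph{after} conditioning) is indeed what yields the stated inequality. But there is a genuine gap in the single-circle step. The quantity Kac--Rice gives you is the expected number of zeros of $F$ on the circle $|x|=r_k$ conditioned on $\xi_0=t$, and with the correct variances ($1-J_0(r_k)^2$ for the value and $r_k^2/2$ for the angular derivative, since $\sum_{n\ge 1} n^2 J_n(r)^2 = r^2/4$) this expectation equals
$\sqrt{2}\, r_k \,(1-J_0(r_k)^2)^{-1/2}\exp\bigl(-t^2 J_0(r_k)^2/(2(1-J_0(r_k)^2))\bigr)$,
which is \emph{twice} the term appearing in the proposition. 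Bounding $\prob\bigl(0\notin G(r_k)\mid \xi_0=t\bigr)$ by this expectation (Markov at level $1$), as you propose, therefore proves only the weaker inequality with $\sqrt{2}$ in place of $\sqrt{2}/2$; your assertion that the Kac--Rice computation ``produces exactly'' the coefficient $\sqrt{2}/2$ is off by a factor of $2$.

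The missing idea, which is how the paper obtains the extra $\tfrac12$: almost surely the nodal set of $F$ meets the circle transversally, so the number $Z$ of zeros on the circle is even; hence $Z\neq 0$ forces $Z\ge 2$, and Markov at level $2$ gives $\prob(Z\ge 1)=\prob(Z\ge 2)\le \tfrac12\,\E[Z]$, which restores the constant $\sqrt{2}/2$. With that parity observation inserted, the rest of your argument (the conditioning decomposition, the conditional union bound, evenness in $t$, and discarding $|t|<T$) goes through exactly as in the paper. Note also that \cite[Lemma 2.1]{IR19}, to which you defer for the single radius, gives precisely the expectation $\sqrt{2}\, r\,(1-J_0(r)^2)^{-1/2}\exp(\cdots)$, so deferring to it does not by itself supply the halving step.
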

\begin{proof}
By a union bound,
\begin{align*}
\prob(0 \in G(r_1) \cap \ldots \cap G(r_M) ) &= 1 - \prob( 0 \notin G(r_1) \cup \ldots \cup G_{r_M} ) \\
&\geq 1 - \sum_k \prob(0 \notin G(r_k) )
\end{align*}
Let $Z=Z(r,\xi_0)$ be the number of zeros of $F$ on the circle $|x|=r$, conditioned on $\xi_0$. 
One has $0 \in G(r)$ if and only if $Z=0$.
As above, write $F(x) = \xi_0 J_0(r) + p(r,\theta)$ in polar coordinates.
Fix $r$ and think of $Z = Z(\xi_0)$ as the number of angles $\theta \in S^1$ such that
\[
p(r,\theta) = - \xi_0 J_0(r)
\]
Then
\[
\prob(0 \in G(r)) = \int_{\R} \prob( Z(\xi_0) = 0) d\gamma(\xi_0)
\]
where $d\gamma(\xi_0) = \exp(-\xi_0^2/2) d\xi_0 / \sqrt{2\pi}$ is the density of a standard Gaussian $\xi_0$. 

Almost surely, each intersection of $F^{-1}(0)$ with the circle $|x| = r$ is transverse. It follows that $Z$ is almost surely even, and hence at least 2 if non-zero. Therefore
\[
\prob( 0 \in G(r)) = \int \left( 1 - \prob(Z(\xi_0) \geq 2) \right) d\gamma(\xi_0).
\]
For $|\xi_0| \geq T$, we use Markov's inequality
\[
\prob( Z(\xi_0) \geq 2) \leq \frac{1}{2} \E[Z(\xi_0)]
\]
where the expectation is taken over $p(r,\theta)$ conditioned on $\xi_0$.
Following \cite[Lemma 2.1]{IR19}, this expectation can be computed using the Kac-Rice formula.
The result is
\[
\E[Z(\xi_0)] = \sqrt{2} \frac{r}{\sqrt{1-J_0(r)^2} } \exp\left( \frac{-\xi_0^2 J_0(r)^2 }{2(1-J_0(r)^2)}\right).
\]
Using this estimate for $|\xi_0| > T$ and the trivial bound $\prob(\ldots) \geq 0$ for $|\xi_0| < T$, we arrive at
\[
\prob( 0 \in G(r)) \geq 2 \int_T^{\infty} \left( 1 - \frac{\sqrt{2}}{2} \frac{r}{\sqrt{1-J_0(r)^2} } \exp \left( \frac{-\xi_0^2 J_0(r)^2 }{2(1-J_0(r)^2)}\right) \right) d\gamma(\xi_0).
\]
Taking $r=r_k$ successively, we obtain, for any choice of $T > 0$,
\[
\prob \left( 0 \in \bigcap_k G(r_k) \right) \geq 2 \int_T^{\infty} \left( 1 - \frac{\sqrt{2}}{2} \sum_k  \frac{r_k}{\sqrt{1-J_0(r_k)^2} } \exp\left( \frac{-\xi_0^2 J_0(r_k)^2 }{2(1-J_0(r_k)^2)}\right) \right) d\gamma(\xi_0)
\]
as required, by the union bound.
\end{proof}

The optimal choice of $T$ satisfies
\begin{equation} \label{e:T-optimal}
1 - \frac{\sqrt{2}}{2} \sum_k  \frac{r_k}{\sqrt{1-J_0(r_k)^2} } \exp\left( \frac{-T^2 J_0(r_k)^2 }{2(1-J_0(r_k)^2)}\right) = 0.
\end{equation}
This is the smallest value that makes the integrand positive for $|\xi_0| > T$.
If there is only a single radius $r$, one finds a solution in closed form:
\[
T^2 = \left( \frac{1}{J_0(r)^2} - 1 \right) \log\left( \frac{r^2}{2(1-J_0(r)^2)} \right).
\]

Numerically, we take a limiting case
\[
r_k \rightarrow \sqrt{k+1} j_{0,1}
\]
for $k= 1,2,3,4,5$, so that the volume constraints (\ref{e:r1}) and (\ref{e:steps}) just barely hold. Note that $r_5 = \sqrt{6} j_{0,1} = 5.8905$ lies between $j_{0,2}=5.5200$ and $j_{0,3}=8.6537$, so one can take $M=5$ in (\ref{e:rm}). 
This choice gives
\[
\mu(1) > 3.2724 \times 10^{-247}
\]
after solving for $T = 41.9286$ using (\ref{e:T-optimal}).
For $\mu(0)$ on the other hand, one can take $M=1$ and a single radius $r_1 = 3.8317$ equal to the critical point of $J_0$ between $j_{0,1}$ and $j_{0,2}$. This choice makes $J_0(r)$ as small as possible, which does not quite optimize the bound but comes close. The result is
\[
\mu(0) > 2.1186 \times 10^{-5}
\]
with $T = 3.2086$. The appendix discusses the numerical details.

\section{Conclusion} \label{sec:conc}

We have given the first effective bounds on the monochromatic connectivity measure in two dimensions.
The arguments are simple and robust, and would be easy to adapt to other situations.
We have relied only in inessential ways on special features such as the differential equation satisfied by the random functions, or the fact that the random coefficients are Gaussian. 
It would be enough to have an explicit basis for the model, such as (\ref{e:plane_wave}), and a bound on the first moment of the coefficients.
The only fact specific to eigenfunctions that enters in the proofs of Theorems~\ref{thm:main_0} and \ref{thm:main_1} is that their nodal domains have a certain minimal volume, but this can be avoided by further refining the $C^1$ approximation.

Numerical efficiency was not our goal at this stage.
We highlight two ways this aspect could be improved, one deterministic and one probabilistic.
A third improvement is of course to optimize the choice of $\delta$ and $\varepsilon$, but the following variations seem likely to be more impactful.
The first observation is that there are many eigenfunctions, not particularly close to our target $J_0(|x|)$, but which nevertheless have a similar nodal topology.
Suppose $G_j$ are finitely many Laplace eigenfunctions, each with a nodal domain of the desired type contained in a domain $A$. 
Suppose one has determined numbers $\varepsilon_j>0$ small enough that an approximation of the form $\| F - G_j \|_{A} < \varepsilon_j$ guarantees that $F$ then also has such a nodal domain. 
By taking smaller parameters if necessary, one can assume $\| G_j - G_k \|_{A} > 2 \max_j \varepsilon_j$ for all $j \neq k$.
Then the events $\| F - G_j \|_{A} < \varepsilon_j$ are disjoint and the probability that $F$ has a nodal domain is at least
\[
\sum_j \prob( \| F - G_j \| < \varepsilon_j ).
\]
This sum could be much higher than the result from using only a single function $G$, provided one can supply enough well-separated targets.

For a concrete example of how to produce targets other than $J_0(|x|)$, consider a sum $\sum_j c_j J_0(|x-z_j|)$. If all the centers $z_j$ coincide, say $z_j=0$, then the sum has the same zero set as $J_0(|x|)$. As $z_j$ and $c_j$ vary, one can produce a simply connected domain whose shape may be quite different from the round disk captured by approximations to $J_0(|x|)$. 
For example, taking $z_1$ and $z_2$ reasonably close together leads to a figure-eight or peanut-shaped domain.
With a larger number of centers, one could produce nodal domains of arbitrarily high complexity.
In particular, this is one approach to quantitative estimates of $\mu(h)$ for higher $h$.

The second improvement is that, even with a single target $J_0$, using stronger probabilistic input would no doubt strengthen the lower bounds stated in Corollary~\ref{c:numerical_bounds}, perhaps to something closer to $10^{-100}$ or $10^{-10}$, rather than $10^{-1000}$.
Proposition~\ref{prop:prob} uses nothing more than Markov's inequality, which is easy to implement and would allow one to study other models with very general random coefficients, but falls far short of capturing the tighter concentration present in the Gaussian case. Much more powerful bounds on suprema are available, in particular from Dudley's entropy method \cite{D67}. 
See \cite{BL13} and \cite{CHsup} for bounds on suprema in the random wave model.

The method could also be improved at the level of detecting individual nodal domains.
We have simply packed equal-sized disks and argued that each has a non-zero probability of containing a nodal domain. 
One can replace these disks with detectors of different shapes and sizes, and these choices can be made non-deterministically, that is, in response to the random outcome $F$.
This extra flexibility could become more important as one considers more elaborate nodal topologies, where surrounding each instance by a large ball might interfere with detecting other configurations.

\section*{Acknowledgements}

It is a pleasure for us to thank Alex Barnett and Alejandro Rivera for helpful discussions about nodal domains of random waves.  SE is partially supported by NSERC Discovery Grant RGPIN-2020-04775.  SE would also like to thank the Department of Mathematics \& Statistics at McGill University for their hospitality during the initial stages of this article.

\section*{Appendix: Computational Comments}

We used PARI/GP for the calculations leading to Corollary~\ref{c:numerical_bounds} and Corollary~\ref{cor:sym-numbers}. 
To manipulate derivatives of Bessel functions, we used the standard recurrence formula
\[
J_n'(r) = \frac{J_{n-1}(r) - J_{n+1}(r) }{2}
\]
found in \cite[\S 2.12 formula (2)]{W} for instance.
For a bounded range of $r$, $J_n(r)$ decays rapidly as $n$ increases, roughly as $(r/2)^n/n!$.
In particular, $J_n(r)$ will achieve its maximum at the largest value of $r$ in range. 
Explicit inequalities of this type \cite[\S 2.11 formula (5)]{W} also allow one to bound the error in truncating the sum $S$ to finitely many terms. 

To numerically evaluate the integral
\[
\int_{ZS/\varepsilon}^{\infty} \left( 1 - \frac{2ZS}{\varepsilon x} \right) e^{-x^2/2} dx
\]
we expressed it in terms of the incomplete Gamma function
\[
\Gamma(s,x) = \int_x^{\infty} t^{s-1} e^{-t} dt.
\]

The symmetrization method can be implemented as follows. One inputs a vector \texttt{r} listing the radii $r_1, \ldots, r_M$. Then \texttt{mu1(r)} is the bound obtained from Theorem~\ref{thm:sym}.
The function \texttt{b(r,x)} is used to write the integrand in Theorem~\ref{thm:sym} and \texttt{q(r)} computes the necessary integrals using the incomplete gamma function. 
To find the optimal $T$, the program solves (\ref{e:T-optimal}) numerically over $T$ less than the quantity \texttt{Tmax(r)}. There might be no solution for $T$ if \texttt{Tmax} is too small, but once large enough, its precise value does not affect the results. Our choice of \texttt{Tmax} is made in view of the input leading to Corollary~\ref{cor:sym-numbers}. In that case, $M=5$ and among $r_1, \ldots, r_5$, it is $r_5$ that comes closest to a zero of $J_0(r)$. 
The choice below automatically increases \texttt{Tmax} as $r_5$ approaches $j_{0,2}$.\\
\noindent \\
\noindent \texttt{ \noindent
b(r,x)=(1/sqrt(2))*r*exp(-x\string^2*besselj(0,r)\string^2/2/(1-besselj(0,r)\string^2));\\
Tmax(r)=5/abs(besselj(0,r[5]));\\
T(r)=solve(x=0,Tmax(r),1-sum(i=1,length(r),b(r[i],x)));\\
q(r)=incgam(1/2,T(r)\string^2/2)- sqrt(1/2)*sum(i=1,length(r), \\
\indent \indent \indent r[i]*incgam(1/2,T(r)\string^2/(2*(1-besselj(0,r[i])\string^2))));\\
mu1(r)=sqrt(Pi)*q(r)/(r[length(r)]\string^2);\\
}

Here are the commands used to obtain the bound for $\mu(1)$ stated in Corollary~\ref{c:numerical_bounds}.
 The first steps set $\delta=1/2$ and evaluate the first three roots, as well as the first two critical points, of $J_0$.\\
\noindent
\begin{footnotesize}
\texttt{del=0.5;\\
j1=solve(x=2,3,besselj(0,x));\\
j2=solve(x=5,6,besselj(0,x));\\
j3=solve(x=8,9,besselj(0,x));\\
c1=solve(x=3,4,besselj(1,x));\\
c2=solve(x=5,8,besselj(1,x));\\
}
\end{footnotesize}

The next steps find the largest $\varepsilon$ allowed by Proposition~\ref{prop:stay1} for the given value of $\delta$. Then the endpoints $a_k(\varepsilon)$, $b_k(\varepsilon)$ are computed by solving $J_0(r) = \pm \varepsilon$. \\
\begin{footnotesize}
\texttt{
eps0=abs(besselj(0,c2));\\
eps1=(del/(1+del))*sqrt(1-(del/j1)\textasciicircum 2)*abs(besselj(1,j1+del))/sqrt(2);\\
eps2=(del/(1+del))*sqrt(1-(del/j2)\string^2)*abs(besselj(1,j2+del))/sqrt(2);\\
eps=min(min(eps0,eps1),eps2);\\
a1=solve(x=0,j1,besselj(0,x)-eps);\\
b1=solve(x=j1,c1,besselj(0,x)+eps);\\
a2=solve(x=c1,j2,besselj(0,x)+eps);\\
b2=solve(x=j2,c2,besselj(0,x)-eps);\\
a3=solve(x=c2,j3,besselj(0,x)-eps);\\
}
\end{footnotesize}

By inspecting plots, one determines which values of $n$ lead to maxima for $J_n(r), J_n'(r), J_n(r)/r$ somewhere other than $j_{0,1}+\delta$. The next commands calculate where these maxima occur, as reported in Table~\ref{table:contributions}.\\
\noindent
\begin{footnotesize}
\texttt{
u1=j1-del;\\
v1=solve(x=j1-del,j2+del,0.5*(besselj(1-1,x)-besselj(1+1,x))/x-besselj(1,x)/(x\string^2));\\
w1=solve(x=j1-del,j2+del,0.75*besselj(1,x)-0.25*besselj(3,x));\\
u2=solve(x=j1-del,j2+del,besselj(2-1,x)-besselj(2+1,x));\\
v2=solve(x=j1-del,j2+del,0.5*(besselj(2-1,x)-besselj(2+1,x))/x-besselj(2,x)/(x\string^2));\\
w2=solve(x=j1-del,j2+del,besselj(2-2,x)-2*besselj(2,x)+besselj(2+2,x));\\
u3=solve(x=j1-del,j2+del,besselj(3-1,x)-besselj(3+1,x));\\
v3=solve(x=j1-del,j2+del,0.5*(besselj(3-1,x)-besselj(3+1,x))/x-besselj(3,x)/(x\string^2));\\
w3=j2+del;\\
u4=solve(x=j1-del,j2+del,besselj(4-1,x)-besselj(4+1,x));\\
v4=solve(x=j1-del,j2+del,0.5*(besselj(4-1,x)-besselj(4+1,x))/x-besselj(4,x)/(x\string^2));\\
w4=solve(x=j1-del,j2+del,besselj(4-2,x)-2*besselj(4,x)+besselj(4+2,x));\\
u5=j2+del;\\
v5=solve(x=j1-del,j2+del,0.5*(besselj(5-1,x)-besselj(5+1,x))/x-besselj(5,x)/(x\string^2));\\
w5=solve(x=j1-del,j2+del,besselj(5-2,x)-2*besselj(5,x)+besselj(5+2,x));\\
u6=j2+del;\\
v6=j2+del;\\
w6=solve(x=j1-del,j2+del,besselj(6-2,x)-2*besselj(6,x)+besselj(6+2,x));\\
}
\end{footnotesize}

The contributions to $S$ from $n=1,\ldots,6$ and $n \geq 7$ are computed as follows.\\
\begin{footnotesize}
\noindent
\texttt{
S1=abs(besselj(1,u1))+abs(besselj(1,v1))/v1+0.5*abs(besselj(0,w1)-besselj(2,w1));\\
S2=abs(besselj(2,u2))+abs(besselj(2,v2))/v2+0.5*abs(besselj(1,w2)-besselj(3,w2));\\
S3=abs(besselj(3,u3))+abs(besselj(3,v3))/v3+0.5*abs(besselj(2,w3)-besselj(4,w3));\\
S4=abs(besselj(4,u4))+abs(besselj(4,v4))/v4+0.5*abs(besselj(3,w4)-besselj(5,w4));\\
S5=abs(besselj(5,u5))+abs(besselj(5,v5))/v5+0.5*abs(besselj(4,w5)-besselj(6,w5));\\
S6=abs(besselj(6,u6))+abs(besselj(6,v6))/v6+0.5*abs(besselj(5,w6)-besselj(7,w6));\\
M=100;\\
Stail=sum(n=7,M,\\
\indent \indent \indent \indent (1+n/(j2+del))*abs(besselj(n,j2+del))+\\
\indent \indent \indent \indent 0.5*abs(besselj(n-1,j2+del)-besselj(n+1,j2+del)));\\
S=S1+S2+S3+S4+S5+S6+Stail;\\
}
\end{footnotesize}

Finally, with $S$ in hand, the integral is computed using the incomplete gamma function.\\
\begin{footnotesize}
\noindent
\texttt{
tmin=0.5*Pi*(S/eps)\string^2;\\
int = (1/sqrt(2))*incgam(1/2,tmin) - (sqrt(Pi)/2)*(S/eps)*incgam(-1/2,tmin);
}
\end{footnotesize}


\begin{thebibliography}{99}

\bibitem{A16} N. Anantharaman, \emph{Topologie des hypersurfaces nodales de fonction al\'eatories gaussiennes}, S\'eminaire Bourbaki, no. 1116 (2016).

\bibitem{Bar} A.~H. Barnett, \url{https://users.flatironinstitute.org/~ahb/rpws/}, Simons Flatiron Institute (2020) 

\bibitem{BarJin} A.~H. Barnett and M. Jin, \emph{Statistics of random plane waves}, in preparation.

\bibitem{Berry77} M. Berry, \emph{Regular and irregular semiclassical wavefunctions}, J. Phys. A: Math. General 10(12), 2083 (1977).

\bibitem{BCW19} D. Beliaev, V. Cammorota, and I. Wigman, \emph{Two point function for critical points of a random plane wave}, Int. Math. Res. Not., Volume 2019, Issue 9 2661-2689 (2019).

\bibitem{BK13} D. Beliaev and Z. Kereta, \emph{On the Bogomolny-Schmit Conjecture}, J. Phys. A: Math. The., 46 (45) (2013).

\bibitem{BS02} E. Bogomolny and C. Schmit, \emph{Percolation model for nodal domains of chaotic wave functions}, Phys. Rev. Lett. 88, 114102 (2002).

\bibitem{BL13} N.~Burq and G.~Lebeau, \emph{Injections de Sobolev probabilistes et applications.}  Ann. Sci. \'{E}c. Norm. Sup\'{e}r. (4), 46 (2013), 917–-962. arXiv:1111.7310. (2011)

\bibitem{CHsup} Y.~Canzani and B.~Hanin. \emph{High Frequency Eigenfunction Immersions and Supremum Norms of Random Waves.} Electronic Research Announcements in Mathematical Sciences, Volume 22, 2015, pp. 76-86. arXiv: 1406.2309.

\bibitem{CH15} Canzani, Y. and B. Hanin. \emph{Scaling limit for the kernel of the spectral projector and remainder estimates in the pointwise Weyl law.} Anal. PDE 8, no. 7 (2015): 1707-1732.

\bibitem{CS18} Y. Canzani and P. Sarnak, \emph{Topology and nesting of the zero set component of monochromatic random waves}, Comm. Pure. Appl. Math. (2018).

\bibitem{CKM} H.~Cohn, A.~Kumar, and G.~Minton, \emph{Optimal simplices and codes in projective spaces} Geometry and Topology 20 (2016), 1289-1357, arXiv:1308.3188

\bibitem{D67} R.~M.~Dudley, \emph{The sizes of compact subsets of Hilbert space and the continuity of Gaussian processes.} J. Functional Analysis {\bf 1}, 290-330 (1967)

\bibitem{EPS13} A.~Enciso and D.~Peralta-Salas, \emph{Submanifolds that are level sets of a solution to an elliptic PDE}, Adv. Math. 249 (2013) 204–249. arXiv: 1007.5181

\bibitem{GW14} D. Gayet and J.Y. Welschinger,  \emph{Expected topology of random real algebraic submanifolds}, J. Lond. Math. Soc. (2) 90(1), 105–-120 (2014).

\bibitem{GW15} D. Gayet and J.Y. Welschinger,  \emph{Lower estimates for the expected Betti numbers of random real hypersurfaces}, J. Inst. Math. Jussieu (2015) 14(4), 673-702.

\bibitem{IR19} M.~Ingremeau and A.~Rivera, \emph{A lower bound for the Bogomolny-Schmit constant for random monochromatic plane waves} arXiv:1803.02228 to appear in Mathematics Research Letters (2019)

\bibitem{K12} K. Konrad, \emph{Asymptotic Statistics of Nodal Domains of Quantum Chaotic Billiards in the Semiclassical Limit}, Senior Thesis, Dartmouth College (2012). Available at \url{https://users.flatironinstitute.org/~ahb/dartmouth/papers/Konrad2012thesis.pdf}

\bibitem{LL15} A. Lerario and E. Lundberg, \emph{Statistics on Hilbert’s 16th Problem},  Int. Math. Res. Not., Vol. 2015, No. 12, pp. 4293–4321 (2015) .

\bibitem{N11} M. Nastasescu, \emph{The number of ovals of a real plane curve}, Senior Thesis, Princeton University (2011).

\bibitem{NS2009}  F.~Nazarov and M.~Sodin, \emph{On the number of nodal domains of random spherical harmonics}, Amer. J. Math. 131(5), 1337–-57 (2009)

\bibitem{NS2016} F.~Nazarov and M.~Sodin, \emph{Asymptotic laws for the spatial distribution and the number of connected components of zero sets of Gaussian random functions}, Zh. Mat. Fiz. Anal. Geom., 2016, Volume 12, Number 3, 205–-278 DOI: https://doi.org/10.15407/mag12.03.205

\bibitem{N} J.~W.~Neuberger, \emph{The continuous Newton's method, inverse functions, and Nash-Moser} Amer. Math. Monthly Vol. 114, No. 5 (May, 2007), pp. 432--437

\bibitem{Nic15} L. Nicolaescu, \emph{Critical sets of random smooth functions on compact manifolds}, Asian J. Math. Vol. 19, No. 3 (2015).

\bibitem{So12} M. Sodin, \emph{Lectures on random nodal portraits}, lecture notes for a mini-course given at the St. Petersburg Summer School in Probability and Statistical Physics, \url{http://www.math.tau.ac.il/~sodin/SPB-Lecture-Notes.pdf} (2012)

\bibitem{SW18} P. Sarnak and I. Wigman, \emph{Topologies of nodal sets of random band-limited functions}, Comm. Pure. Appl. Math. (2018).

\bibitem{TL71} H.~N.~V.~Temperley and E.~H. Lieb, \emph{Relations between the 'Percolation' and 'Colouring' Problem and other Graph-TheoreticalProblems Associated with Regular Planar Lattices: Some Exact Results for the 'Percolation'Problem}, Proceedings of the Royal Society of London. Series A, Mathematical and Physical Sciences, Vol. 322, No. 1549 (Apr. 20, 1971), pp. 251-280

\bibitem{W} G.~N.~Watson, A Treatise on the Theory of Bessel Functions. Second edition. Cambridge University Press, Cambridge, 1958.

\bibitem{ZFA} R. Ziff, S. Finch, and V. Adamchik, \emph{Universality of finite-size corrections to the number of critical percolation clusters}, Phys. Rev. Letters, 79 (18) (1997).

\end{thebibliography}
\end{document}